\DeclareMathAlphabet\oldmathcal{OMS}        {cmsy}{b}{n}
\SetMathAlphabet    \oldmathcal{normal}{OMS}{cmsy}{m}{n}
\DeclareMathAlphabet\oldmathbcal{OMS}       {cmsy}{b}{n}
\newtheorem{theorem}{Theorem}[section]
\newtheorem{lemma}[theorem]{Lemma}
\newtheorem{proposition}[theorem]{Proposition}
\newtheorem{corollary}[theorem]{Corollary}
\newtheorem{definition}[theorem]{Definition}
\newtheorem{definition/proposition}[theorem]{Definition/Proposition}
\newtheorem{question}{Question}
\newtheorem{conjecture}[theorem]{Conjecture}
\newenvironment{example}{\medskip \refstepcounter{theorem}
\noindent  {\bf Example \thetheorem}.\rm}{\,}
\newenvironment{remark}{\medskip \refstepcounter{theorem}
\noindent  {\bf Remark \thetheorem}.\rm}{\,}
\newenvironment{remarks}{\medskip \refstepcounter{theorem}
\noindent  {\bf Remarks \thetheorem}.\rm}{\,}
\newtheorem{ack}{Acknowledgments} 
\def\BOne{{\mathchoice {\rm 1\mskip-4mu l} {\rm 1\mskip-4mu l}
                          {\rm 1\mskip-4.5mu l} {\rm 1\mskip-5mu l}}}
\def\Ric{{\rm Ric}}
\def\fract#1#2{\raise4pt\hbox{$ #1 \atop #2 $}}
\def\decdnar#1{\phantom{\hbox{$\scriptstyle{#1}$}}
\left\downarrow\vbox{\vskip15pt\hbox{$\scriptstyle{#1}$}}\right.}
\def\bbc{{\mathbb C}}
\def\bbp{{\mathbb P}}
\def\bbq{{\mathbb Q}}
\def\bbr{{\mathbb R}}
\def\bbt{{\mathbb T}}
\def\bbz{{\mathbb Z}}
\def\gra{\alpha}
\def\grd{\delta}
\def\grg{\gamma}
\def\gri{\iota}
\def\grk{\kappa}
\def\grl{\lambda}
\def\gro{\omega}
\def\grr{\rho}
\def\grt{\tau}
\def\grz{\zeta}
\def\grD{\Delta}
\def\grG{\Gamma}
\def\grL{\Lambda}
\def\grS{\Sigma}
\def\bfa{{\bf a}}
\def\bfl{{\bf l}}
\def\bfp{{\bf p}}
\def\bfu{{\bf u}}
\def\bfv{{\bf v}}
\def\bfw{{\bf w}}
\def\bfz{{\bf z}}
\def\bfS{{\boldsymbol S}}
\def\cala{{\mathcal A}}
\def\calc{{\mathcal C}}
\def\calo{{\mathcal O}}
\def\cald{{\mathcal D}}
\def\cale{{\mathcal E}}
\def\calf{{\mathcal F}}
\def\calh{{\mathcal H}}
\def\cali{{\mathcal I}}
\def\calj{{\mathcal J}}
\def\calk{{\mathcal K}}
\def\calm{{\mathcal M}}
\def\calo{{\mathcal O}}
\def\calr{{\mathcal R}}
\def\cals{{\oldmathcal S}}
\def\calw{{\mathcal W}}
\def\calz{{\oldmathcal Z}}
\def\la#1{\hbox to #1pc{\leftarrowfill}}
\def\ra#1{\hbox to #1pc{\rightarrowfill}}
\def\ga{{\mathfrak a}}
\def\gc{{\mathfrak c}}
\def\ge{{\mathfrak e}}
\def\gf{{\mathfrak f}}
\def\gh{{\mathfrak h}}
\def\gi{{\mathfrak i}}
\def\gl{{\mathfrak l}}
\def\gm{{\mathfrak m}}
\def\gn{{\mathfrak n}}
\def\go{{\mathfrak o}}
\def\gp{{\mathfrak p}}
\def\gr{{\mathfrak r}}
\def\gs{{\mathfrak s}}
\def\gt{{\mathfrak t}}
\def\gu{{\mathfrak u}}
\def\gy{{\mathfrak y}}
\def\gA{{\mathfrak A}}
\def\gB{{\mathfrak B}}
\def\gC{{\mathfrak C}}
\def\gD{{\mathfrak D}}
\def\gF{{\mathfrak F}}
\def\gG{{\mathfrak G}}
\def\gH{{\mathfrak H}}
\def\gM{{\mathfrak M}}
\def\gP{{\mathfrak P}}
\def\gQ{{\mathfrak Q}}
\def\gR{{\mathfrak R}}
\def\gS{{\mathfrak S}}
\def\gT{{\mathfrak T}}
  \def\bfV{\mbox{{\bf V}}} \def\bfS{\mbox{{\bf S}}}    \def\bfH{\mbox{{\bf H}}}      \def\bfF{\mbox{{\bf F}}}
\def\hook{\mathbin{\hbox to 6pt{%
                 \vrule height0.4pt width5pt depth0pt
                 \kern-.4pt
                 \vrule height6pt width0.4pt depth0pt\hss}}}
\def\Sas{\mbox{Sas}}
\def\dif{\gD\gi\gf\gf(M)}
\begin{document}
\bibliographystyle{amsalpha}

\title{Contact Structures of Sasaki Type and their Associated Moduli}\thanks{The author was supported by grant \#519432 from the Simons Foundation.}

\author{Charles P. Boyer}
\address{Department of Mathematics and Statistics,
University of New Mexico, Albuquerque, NM 87131.}

\email{cboyer@unm.edu} 

\keywords{Contact structure of Sasaki type, Sasaki moduli, K-stability, Sasaki-Einstein, CSC, extremal Sasaki metric}

\subjclass[2000]{Primary: 53D35; Secondary:  53C25}

\date{\today}

\dedicatory{Dedicated to David Blair on the occasion of his 78th Birthday}

\begin{abstract}
This article is based on a talk at the RIEMain in Contact conference in Cagliari, Italy in honor of the 78th birthday of David Blair one of the founders of modern Riemannian contact geometry. The present article is a survey of a special type of Riemannian contact structure known as Sasakian geometry. 
An ultimate goal of this survey is to understand the moduli of classes of Sasakian structures as well as the moduli of extremal and constant scalar curvature Sasaki metrics, and in particular the moduli of Sasaki-Einstein metrics.
\end{abstract}

\maketitle

\markboth{Contact Structures of Sasaki Type}{Charles P. Boyer}

%\pagestyle{myheadings}
%\markboth{Maximal Tori in Contactomorphism Groups}{Charles P. Boyer}
\tableofcontents

\section{Introduction}

It is the purpose of this paper to survey much of what is known about a special type of contact structure, namely, a Sasakian structure. I concentrate on results obtained since the publication of the book \cite{BG05}. Moreover, results in the book are placed in the context of Sasaki moduli spaces. 

Roughly, Sasakian geometry is to contact geometry what K\"ahlerian geometry is to symplectic geometry.
A contact structure $\cald$ is said to be of {\it Sasaki type} if there is a Sasakian structure $\cals$ whose contact 1-form $\eta$ satisfies $\cald=\ker\eta$. Equivalently, the foliation $\calf_R$ described by the Reeb vector field $R$ is K\"ahlerian and $R$ lies in $\ga\gu\gt(\cals)$, the Lie algebra of the group of Sasaki automorphisms. Moreover, the affine cone $(C(M),I)$ associated to a Sasaki manifold $M$ has a natural exact K\"ahlerian structure.
Thus, Sasaki geometry, sandwiched naturally between two K\"ahler geometries, is considered to be the odd dimensional sister to K\"ahler geometry. There are, however, substantial differences. The fiducial example of a compact K\"ahler manifold is $\bbc\bbp^n$ with its unique, up to biholomorphism, K\"ahlerian complex structure. In contrast there are exotic contact structures of Sasaki type on the sphere $S^{2n+1}$ when $n\geq 2$. So the standard fiducial Sasakian structure on $S^{2n+1}$ is one of many, in fact infinitely many. The category of Sasaki manifolds $\cals\calm$ is more closely related to the category of K\"ahler orbifolds, not just manifolds. Moreover, there is a multiplication in $\cals\calm$ mimicking the product of K\"ahler orbifolds, but much more complicated. The multiplication in $\cals\calm$ is the {\it Sasaki join} operation.

From the existence of the affine cone $(C(M),I)$ and a theorem of Rossi, cf. Theorem 5.60 of \cite{CiEl12} in dimension $>3$ and Marinescu and Yeganefar \cite{MaYe07} in dimension 3, it follows \cite{BMvK15} that a contact structure of Sasaki type is holomorphically fillable (likewise, a K-contact structure is symplectically fillable). So in both cases we have a special type of contact metric structure as described in the lectures of David Blair and in more detail in his books \cite{Bla76a,Bl02,Bla10}.

We note also that a Sasakian structure is a strictly pseudoconvex CR structure whose Levi form is K\"ahlerian.
We give a list of some fundamental open problems concerning contact structures of Sasaki type in order to motivate our discussion.

\begin{enumerate}
\item Given a smooth manifold $M$ determine how many inequivalent contact structures $\cald$ of Sasaki type there are:

\begin{itemize}
\item with distinct first Chern class $c_1(\cald)$.
\item with the same first Chern class $c_1(\cald)$.
\end{itemize}
%\bigskip
\item Given a contact structure or isotopy class of contact structures:

\begin{itemize}
\item Determine  the space of compatible Sasakian structures.
\item Determine the (pre)-moduli space of Sasaki classes.
\item Determine the (pre)-moduli space of extremal Sasakian structures.
\item Determine the  (pre)-moduli space of Sasaki-Einstein or $\eta$-Einstein structures.
\item Determine the (pre)-moduli space of Sasakian structures with the same underlying CR structure.
\item Determine those having distinct underlying CR structures within the same  isotopy class of contact structures.
\end{itemize}
\end{enumerate}
A complete answer to all of these questions is probably intractible; nevertheless, it seems judicious to have answering such problems as an ultimate goal.

\begin{ack}
This survey is based on recent joint work with various colleagues: Hongnian Huang, Eveline Legendre, Leonardo Macarini, Justin Pati, Christina T{\o}nnesen-Friedman, Craig van Coevering, and Otto van Koert, to whom I owe a large dept of gratitude. I also want to thank Miguel Abreu, Dieter Kotschick, Leonardo Macarini, and Otto van Koert for correspondence on certain pertinent issues presented here. Finally, I thank the organizers, Gianluca Bande, Beniamino Cappelletti Montano and
Paola Piu, for a wonderful conference as well as the many participants with whom I enjoyed numerous conversations.
\end{ack}

\section{Basics of Sasaki Geometry}
Here we give only a brief description of Sasakian structures, and refer to \cite{BG05} for details and further description.  A contact structure on a $2n+1$ dimensional manifold $M$ is a codimension one distribution $\cald$ that is maximally non-integrable in the sense that there is a 1-form $\eta$ on $M$ such that $\eta\wedge (d\eta)^n\neq 0$ everywhere on $M$. We shall always assume that our contact manifolds $M$ are compact without boundary (closed). A {\it contact manifold} $M$ with contact bundle $\cald$ will be denoted by $(M,\cald)$. When we choose a contact 1-form $\eta$ we have a {\it strict contact manifold} $(M,\eta)$. We shall at times refer to the vector bundle $\cald$ as a {\it contact structure} and the 1-form $\eta$ as a {\it strict contact structure}. We denote by $\gC(M)$ the set of all oriented and co-oriented contact structures on $M$, and by $\gS\gC(M,\cald)$ the subset of all strict contact structures $\eta$ such that $\eta=\ker\cald$. This subset splits non-canonically into two connected components $\gS\gC(M,\cald)^{\pm}$ given by fixing $\eta_o\in\gS\gC(M,\cald)$ and defining 
$$\gS\gC(M,\cald)^{\pm}= \{f\eta_o~|~f\in C^{\infty}(M),~ \begin{cases} ~\text{if $f>0$ ~everywhere}; \\
                                                                                                          ~ \text{if $f<0$ ~everywhere}.
                                                                                     \end{cases}$$
The map $\eta_o\mapsto -\eta_o$ reverses orientation when $n$ is even, but preserves the orientation when $n$ is odd. A choice of component defines a {\it co-orientation} on $(M,\cald)$. We assume in what follows that a contact structure $\cald$ on $M$ is both oriented and co-oriented. 

Given a strict contact structure $\eta\in \gS\gC(M,\cald)$, it is well known that there exists a unique vector field $R$ on $M$, called the {\it Reeb vector field}, that satisfies $\eta(R)=1$ and $R\hook d\eta=0$. On the $2n$-dimensional vector bundle $\cald$ we can choose a complex structure $J$ which gives $M$ a strictly pseudo-convex almost CR structure, denoted by $(\cald,J)$. Here we assume that $J$ is integrable, so $(\cald,J)$ is a CR structure on $M$. We can extend $J$ to a section $\Phi$ of the endomorphism bundle ${\rm End}(M)$ by setting 
\begin{equation}\label{JPhi}
\Phi R=0,\qquad \Phi |_\cald=J.
\end{equation}
We can now choose a compatible Riemannian metric $g$ on $M$ by requiring  
\begin{equation}\label{compatmetric}
g(\Phi X,\Phi Y)= g(X,Y) -\eta(X)\eta(Y)
\end{equation} 
holds for all vector fields $X,Y$. Then the quadruple $(R,\eta,\Phi,g)$ is called a {\it contact metric structure} on $M$. We make note of the relation $\Phi\circ\Phi=-\BOne +R\otimes \eta$ and that we can write $g$ as
$$g=d\eta\circ (\Phi\otimes \BOne) + \eta\otimes\eta.$$
We can extend the co-orientation involution $\gri:\gS\gC(M,\cald)^{+}\ra{1.8} \gS\gC(M,\cald)^{-}$ to the space of contact metric structures by sending 
the contact metric structure $\cals=(R,\eta,\Phi,g)$ to its {\it conjugate structure} $\cals^c=(-R,-\eta,-\Phi,g)$. We have

\begin{question}\label{codiff}
When is the involution $\gri$ induced by a diffeomorphism of $M$?
\end{question} 

Little appears to be known about this question at this stage, cf. Exercise 7.2 of \cite{BG05}.

\begin{definition}\label{Sasdef}
A {\it Sasakian structure} $\cals$ is a contact metric structure such that $(\cald,J)$ is an integrable CR structure, and the Reeb field $R$ is an infinitesimal isometry. In fact the transverse metric $g^T=d\eta\circ (\Phi\otimes \BOne)$ implies that the CR structure is strictly pseudoconvex. If $J$ is not necessarily integrable but $R$ is still an infinitesimal isometry, the quadruple $\cals$ is called a {\it K-contact} structure.  
\end{definition}

Note that if $\cals$ is Sasaki (K-contact) so is $\cals^c$. Recently, as discussed in the talk of A. Tralle, there has been much success in finding examples of K-contact manifolds that admit no Sasakian structure \cite{HaTr14,CNMY14,CaNiYu15,MuTr15,BFMT16,CNMY16}. This involves formality which also provides obstructions in the 3-Sasakian case \cite{FIM15} as discussed in the talk of M. Fern\'andez. 3-Sasakian structures are automatically Sasaki-Einstein and there are deformations of 3-Sasakian manifolds to Sasaki-Einstein structures that are not 3-Sasakian \cite{vCov13,vCov17}.  Here I concentrate mainly on the Sasaki case.

\begin{definition}\label{consas}
A contact structure $\cald$ on $M$ is said to be of Sasaki type if there exists a Sasakian structure $\cals=(R,\eta,\Phi,g)$ such that $\cald=\ker\eta$.
\end{definition}

In particular, if $\cald$ is of Sasaki type, then $\cald$ admits a compatible complex structure $J$ such that $(\cald,J)$ gives $M$ a strictly pseudo-convex CR structure.

For a general contact metric structure $\cals$ the dynamics of the Reeb vector field $R$ can be quite complicated; however, if $\cals$ is K-contact it is well understood. In particular, for K-contact structures the foliation $\calf_R$ is a Riemannian foliation. The structure $\cals$ or the foliation $\calf_R$ is said to be {\it quasi-regular} if there is a positive integer $k$ such that each point of $M$ has a foliated coordinate neighborhood $U$ such that each leaf of $\calf_R$ passes through $U$ at most $k$ times. If $k=1$ $\calf_R$ is called {\it regular}, and if there is no such $k$ it is called {\it irregular}. Quasi-regularity implies K-contact in which case the Reeb field $R$ generates a locally free $S^1$ action of isometries. For irregular K-contact structures the foliation $\calf_R$ generates an irrational flow on a torus in the automorphism group $\gA\gu\gt(\cals)$. The closure of the foliation $\calf_R$ gives a singular foliation $\bar{\calf}_R$.

\subsection{The Transverse Holomorphic Structure}
The transverse geometry of the Reeb foliation $\calf_R$ generated by $R$ is of great interest. In particular, we have \cite{BG05}[Proposition 6.4.8]
\begin{proposition}\label{RieK}
A contact metric structure $\cals=(R,\eta,\Phi,g)$ is K-contact if and only if the foliation $\calf_R$ is Riemannian, that is admits a transverse Riemannian structure.
\end{proposition}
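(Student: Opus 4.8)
The plan is to prove both directions by working with the decomposition $TM = \cald \oplus \bbr R$ and analyzing how the Reeb flow interacts with a chosen compatible metric. First I would recall that a foliation $\calf_R$ is Riemannian precisely when it admits a transverse metric, i.e. a leafwise-holonomy-invariant metric on the normal bundle $\nu(\calf_R) \cong \cald$; equivalently, there is a metric $g$ on $M$ for which the flow of $R$ acts by isometries on the normal directions. For the forward direction, suppose $\cals=(R,\eta,\Phi,g)$ is K-contact, so $\call_R g = 0$. Since $\cald = \ker\eta$ and $\eta$ is $R$-invariant (as $\call_R\eta = R\hook d\eta + d(\eta(R)) = 0$), the splitting $TM=\cald\oplus\bbr R$ is preserved by the flow of $R$. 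Then the restriction of $g$ to $\cald$ descends to a bundle metric on $\nu(\calf_R)$, and $R$-invariance of $g$ forces this transverse metric to be holonomy-invariant. Hence $\calf_R$ is Riemannian with transverse metric $g^T = d\eta\circ(\Phi\otimes\BOne)$.

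For the converse, assume $\calf_R$ is Riemannian, so there is a transverse metric $g^T$ on $\cald$ invariant under the (local) Reeb flow. The task is to assemble from $g^T$ a genuine contact metric structure on $M$ whose Reeb field is the original $R$ and which is invariant under the Reeb flow. Here I would use that $d\eta$ restricted to $\cald$ is a symplectic form which is automatically $R$-invariant (again since $\call_R d\eta = 0$); by a fiberwise polar-decomposition argument one can average $g^T$ if necessary and build a compatible almost complex structure $J$ on $\cald$ with $g^T(X,Y) = d\eta(X, JY)$, all of it Reeb-invariant because both $d\eta|_\cald$ and (the averaged) $g^T$ are. Setting $\Phi R = 0$, $\Phi|_\cald = J$, and $g = g^T + \eta\otimes\eta$, one checks \eqref{compatmetric} holds and that $\call_R g = 0$ follows from $\call_R\eta = 0$, $\call_R d\eta = 0$, and $R$-invariance of $J$. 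Thus $(R,\eta,\Phi,g)$ is K-contact.

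The main obstacle is the converse direction, specifically ensuring that the compatible transverse almost complex structure $J$ produced from the abstract transverse metric $g^T$ is itself Reeb-invariant and globally well-defined on $\cald$ (not merely on foliation charts). The fiberwise construction of $J$ from a metric and a symplectic form is canonical (polar decomposition), so invariance of the inputs propagates to $J$; the point to be careful about is that the transverse Riemannian structure in the sense of foliation theory need not a priori be the restriction of a global metric compatible with $d\eta$, so one must reconcile the foliation-theoretic transverse metric with the contact data. Once that compatibility is arranged — using that $d\eta|_\cald$ and $\eta$ are intrinsic to the contact structure and automatically Reeb-invariant — the rest is the routine verification that the resulting quadruple satisfies Definition \ref{Sasdef}'s K-contact conditions.
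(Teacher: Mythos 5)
Your forward direction is correct and is the standard argument: K-contact means $\call_R g=0$; together with $\call_R\eta=0$ this makes the transverse metric $g^T=d\eta\circ(\Phi\otimes\BOne)$ invariant under the Reeb flow, which generates the holonomy of $\calf_R$, so $\calf_R$ is Riemannian.

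The converse, as you argue it, proves a different statement from the one asserted, and the difference is the crux. The proposition (quoted from \cite{BG05}) is about the \emph{given} structure $\cals=(R,\eta,\Phi,g)$; the transverse Riemannian structure relevant to it is the one determined by $g$ itself, namely $g^T=d\eta\circ(\Phi\otimes\BOne)$ being holonomy-invariant (equivalently, $g$ is bundle-like for $\calf_R$). With that reading the converse is a two-line computation: holonomy invariance gives $\call_R g^T=0$, and since $\call_R\eta=0$ one gets $\call_R g=\call_R g^T+\call_R(\eta\otimes\eta)=0$, so $R$ is Killing and $\cals$ is K-contact --- no construction of a new $J$ is needed. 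What you do instead is start from an arbitrary transverse Riemannian metric and manufacture a new compatible pair $(J,g')$; that shows only that $\eta$ admits \emph{some} K-contact structure with Reeb field $R$, not that the given $\cals$ is K-contact, and reusing the letters $(R,\eta,\Phi,g)$ for the manufactured structure hides this. Indeed, under your reading (``admits any transverse Riemannian structure'') the ``if'' direction is false: on the standard contact sphere choose a compatible complex structure on $\cald_o$ that is not invariant under the Reeb flow; the associated contact metric structure is not K-contact, yet the characteristic (Hopf) foliation is certainly Riemannian. Two further slips inside your construction: the polar decomposition of $d\eta|_\cald$ against $g^T$ yields a compatible, Reeb-invariant $J$, but not one with $g^T(X,Y)=d\eta(X,JY)$ on the nose, so you must replace $g^T$ by the associated metric $d\eta(\cdot,J\cdot)$ (which is still invariant) before asserting \eqref{compatmetric}; and ``averaging'' over the Reeb flow is not available, since it is a generally non-compact $\bbr$-action --- fortunately it is also unnecessary, precisely because the polar decomposition is canonical and its inputs are invariant.
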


In the Sasaki case the foliation $\calf_R$ is K\"ahlerian (i.e. has a transverse K\"ahler structure).
Let $\cals$ be a Sasakian (K-contact) structure on $M.$
From the viewpoint the Reeb foliation $\calf_R$ we denote the complex structure on the quotient  $\nu(\calf_R)=TM/\calf_R$ by $\bar{J}$ and call it the transverse holomorphic structure which we also denote by the pair $(\nu(\calf_R),\bar{J})$. The triple $(\nu(\calf_R),\bar{J},d\eta)$ gives $\nu(\calf_R)$ a natural transverse K\"ahler structure. Indeed, this means that the Riemannian foliation $\calf_R$ of a Sasakian (K-contact) structure $(R,\eta,\Phi,g)$ has a transverse K\"ahler (almost K\"ahler) structure given by the non-degenerate basic $(1,1)$-form $d\eta$ which represents a non-trivial element $[d\eta]_B$ in the basic cohomology group $H^{1,1}_B(\calf_R)$. Another important element of $H^{1,1}_B(\calf_R)$ is the basic first Chern class $c_1(\calf_R)$ (see 7.5.17 of \cite{BG05} for the precise definition). $2\pi c_1(\calf_R)$ can be represented by the transverse Ricci form of any Sasakian structure $\cals$ whose Reeb vector field is $R$ and transverse complex structure is $\bar{J}$. We recall the {\it type} of a Sasakian structure \cite{BG05}. 

\begin{definition}\label{Sastype}
A Sasakian structure $\cals=(R,\eta,\Phi,g)$ is {\bf positive (negative)} if the basic first Chern class $c_1(\calf_R)$ is represented by a positive (negative) definite $(1,1)$-form. It is {\bf null} if $c_1(\calf_R)=0$, and {\bf indefinite} if $c_1(\calf_R)$ is otherwise. 
\end{definition}

When $\cals$ is quasiregular so there is a $S^1$ orbibundle whose quotient is a projective algebraic orbifold $(\calz,\grD)$ where $\calz$ is a projective algebraic variety with cyclic quotient singularities and $\grD$ is a branch divisor, $c_1(\calf_R)$ is the pullback of the orbifold first Chern class $c_1^{orb}(\calz,\grD)$. In particular, a quasiregular Sasakian structure $\cals$ is positive if and only if its quotient orbifold $(\calz,\grD)$ is log Fano.

There is an exact sequence 
\begin{equation}\label{Sasexactseq}
0\ra{2,8}\bbr\fract{\grd}{\ra{2.8}}H^{2}_B(\calf_R)\fract{\gri_*}{\ra{2.8}}H^2(M,\bbr)\fract{j_2}{\ra{2.8}}H^1_B(\calf_R)\ra{2.8}\cdots
\end{equation}
where $\grd(a)=a[d\eta]_B$, $\gri_*$ is the map induced by forgetting that a basic closed 2-form is basic, and $j_2$ is the map $R\hook$ composed with the isomorphism $H^1(M,\bbr)\approx H^1_B(\calf_R)$. In particular, we have $\gri_*c_1(\calf_R)=c_1(\cald)\in H^2(M,\bbz)\subset H^2(M,\bbr)$, and we note that $c_1(\cald)$ is a contact invariant.

We remark that a choice of contact metric structure structure $\cals_o$ not only chooses a co-orientation, but also chooses an isomorphism $(TM/\calf_{R_o},\bar{J})\ra{1.8} (\cald,J)$ of complex vector bundles on $M$. 

\subsection{The Affine Cone}\label{affcone}
For any contact manifold $M$ we consider the cone $C(M)=M\times \bbr^+$. Choosing a 1-form $\eta$ in the contact structure of $M$ we form an exact symplectic structure on $C(M)$ by defining $\gro=d(r^2\eta)$ where $r\in\bbr^+$. The pair $(C(M),\gro)$ is called the symplectization of the (strict) contact structure $(M,\eta)$. Using the Liouville vector field $\Psi=r\partial_r$ we define a natural almost complex structure $I$ on $C(M)$ by
$$ IX=\Phi X +\eta(X)\Psi,\qquad  I\Psi=-R$$
where $X$ is a vector field on $M$, and $R$ is understood to be lifted to $C(M)$. Without further ado we shall identify $M$ with $M\times\{r=1\}\subset C(M)$. By adding the cone point we obtain an affine variety $Y=C(M)\cup\{0\}$ which is invariant under a certain complex torus action as described in the next section.

We have the well known \cite{BG05,CoSz12}

\begin{theorem}\label{saskahcone}
The quadruple $\cals=(R,\eta,\Phi,g)$ is Sasakian if and only if $(C(M),\gro,I,\bar{g})$ is K\"ahlerian with the cone metric $\bar{g}=dr^2+r^2g$. Moreover, in the Sasaki case the Sasakian structure $\cals$ corresponds to a polarized affine variety $(Y,I,R)$ polarized by the Reeb vector field $R$. Moreover, $(Y,I)$ is invariant under an effective action of a complex torus $\bbt^k_\bbc$ with $1\leq k\leq n+1$, and the Reeb vector field $R$ lies in the real Lie algebra $\gt_k$ of $\bbt^k$.
\end{theorem}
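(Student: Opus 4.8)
I would establish the statement in three steps, corresponding to its three claims.

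\textbf{(i) $\cals$ Sasakian $\iff$ $(C(M),\gro,I,\bar{g})$ K\"ahler.} Since $\gro=d(r^2\eta)$ is exact it is closed, and it is nondegenerate on $C(M)$ because $\eta\wedge(d\eta)^n\neq 0$ on $M$; thus $\gro$ is a symplectic form for \emph{any} contact metric structure $\cals$. A direct tensorial computation from \eqref{JPhi}, \eqref{compatmetric} and the defining formulas $IX=\Phi X+\eta(X)\Psi$, $I\Psi=-R$ shows $I^2=-\mathrm{Id}$, $\bar{g}(IX,IY)=\bar{g}(X,Y)$, and $\bar{g}(IX,Y)=\gro(X,Y)$, so $(C(M),\gro,I,\bar{g})$ is an almost K\"ahler structure, whether or not $\cals$ is Sasakian. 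Since an almost K\"ahler structure is K\"ahler exactly when its almost complex structure is integrable, it remains to show $I$ is integrable iff $\cals$ is Sasakian. I would compute the Nijenhuis tensor $N_I$ on $C(M)$ by decomposing vector fields into $\Psi$-, $R$-, and $\cald$-components: the $\cald$--$\cald$ block reproduces the Nijenhuis torsion of $J$ on $\cald$, while the blocks involving $R$ or $\Psi$ reduce to the vanishing of $\call_R\Phi$, i.e.\ to $R$ being an infinitesimal isometry of $g$. By the classical normality criterion for contact metric structures (\cite{Bl02}, \cite{BG05}), $N_I\equiv 0$ is therefore equivalent to $(\cald,J)$ being an integrable CR structure \emph{and} $R$ Killing, which is precisely Definition~\ref{Sasdef}; the reverse implication is the same computation read backwards.

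\textbf{(ii) The polarized affine variety.} Adjoin the apex to form $Y=C(M)\cup\{0\}$ with the cone topology and the complex structure $I$. The Reeb flow generates a one-parameter subgroup of $\gA\gu\gt(\cals)$, which is compact because it lies in the isometry group of the compact Riemannian manifold $(M,g)$; its closure is a torus $\bbt^\ell$, $\ell\geq1$, acting holomorphically on $C(M)$, and $R$ acts on this cone with strictly positive weights. Hence the ring $\calr$ of $\bbt^\ell$-finite holomorphic functions on $C(M)$ of polynomial growth in $r$ is graded with finite-dimensional graded pieces indexed by $R$-degree. The key step is to show that $\calr$ is finitely generated and that $Y=\mathrm{Spec}(\calr)$, a normal affine variety with an isolated singularity at the apex. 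In the quasiregular case this is immediate, since $Y$ is the affine cone over the polarized projective orbifold $(\calz,\grD)$ obtained as the leaf space of $\calf_R$, which is projective algebraic. In the irregular case one invokes the CR embedding theory recalled in the introduction (Rossi, cf.\ \cite{CiEl12}; Marinescu--Yeganefar \cite{MaYe07} in dimension $3$) together with its cone counterpart, as carried out in \cite{CoSz12} and \cite{vCov17}. The Reeb vector field then \emph{is} the polarization of $(Y,I)$: it is the element of the acting Lie algebra having positive degree on every generator of $\calr$.

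\textbf{(iii) The torus action and the bounds.} Let $\bbt^k$ be a maximal torus of the compact group $\gA\gu\gt(\cals)$ containing the closure $\bar{\calf}_R$ of the Reeb flow; then $k\geq1$ and $R\in\gt_k$ by construction. The $\bbt^k$-action is holomorphic on $(C(M),I)$ and extends over the apex fixing it, so its complexification $\bbt^k_\bbc$ acts holomorphically and effectively on $(Y,I)$. Because $Y$ is irreducible of complex dimension $n+1$ and an effective holomorphic action of a complex torus on an irreducible variety has generic orbits of dimension equal to the torus dimension, we get $k\leq n+1$. Together, (i)--(iii) prove the theorem. I expect the real difficulty to be confined to step (ii) for irregular Reeb fields --- the finite generation of $\calr$ and the identification $Y=\mathrm{Spec}(\calr)$ as a normal affine variety --- since (i) is a direct tensorial computation and (iii) is routine once $Y$ is known to be an affine variety.
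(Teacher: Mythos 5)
Your proposal is correct and is essentially the standard argument behind this ``well known'' theorem, which the paper itself does not prove but only cites: your step (i) is the classical normality/Nijenhuis computation from \cite{BG05}, and your steps (ii)--(iii) follow the affine-cone construction of \cite{CoSz12} (projective orbifold quotient in the quasiregular case, the Rossi and Marinescu--Yeganefar embedding results \cite{CiEl12,MaYe07} in the irregular case, and complexification of a maximal torus of $\gA\gu\gt(\cals)$ containing the closure of the Reeb flow). The only nitpick is notational: with the paper's normalization $\gro=d(r^2\eta)$ the compatibility identity is $\bar{g}(X,IY)=\tfrac12\gro(X,Y)$, a factor-of-two convention rather than a gap.
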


Such varieties have recently come under close scrutiny under the name of {\it $T$-varieties}, especially their close relationship with {\it polyhedral divisors}  \cite{AlHa06,AIPSV12} . Of course the toric case ($k=n+1$) has been well studied \cite{Oda88,Ful93,CoLiSc11}; but also the complexity one ($k=n$)  case has proven to be quite accessible \cite{Tim97,AlHa06,AlHaSu08,LiSu13}.
Choosing a Reeb vector field on the variety $(Y,I)$ fixes the K\"ahler form $\gro$ and its metric $\bar{g}$. Of particular interest is the $\bbq$-Gorenstein case and its relation with the existence of Sasaki-Einstein metrics \cite{FOW06,CFO07,CoSz15,Suss18}.

\begin{remark}\label{affvarrem}
The variety $Y$ plays two important roles for us. First, under certain circumstances from $Y$ we can obtain a {\it filling} $W$ that distinguishes contact structures of Sasaki type on $M$. Second, the polarized variety $(Y,R)$ can be used to obtain stability theorems (in the sense of geometric invariant theory). This gives rise to obstructions to the existence of extremal and constant scalar curvature Sasaki metrics.
\end{remark}

\subsection{The Einstein Condition}
Our study of Sasaki geometry, which began in the early 90's, was originally motivated by the search for Einstein metrics on compact manifolds. In that time it was unknown whether Einstein metrics were scarce or plentiful on compact manifolds. Indeed we eventually discovered a plethora of examples with huge moduli spaces, cf. \cite{BG05} and references therein.

Of course, the condition for a Riemannian metric $g$ to be Einstein is that its Ricci curvature $\Ric_g$ is proportional to the metric $g$. Since when $g$ is Sasaki we have the relation $\Ric_g(X,R)=2n\eta(X)$, any Sasaki-Einstein (SE) metric satisfies 
\begin{equation}\label{SEeqn}
\Ric_g=2ng.
\end{equation}
It follows that any SE structure is positive. This condition implies an equation in the basic cohomology group $H^2(\calf_R)$, namely,
\begin{equation}\label{cohSE}
c_1(\calf_R)=a[d\eta]_B,\qquad  a\in\bbr.
\end{equation}
From the exact sequence \eqref{Sasexactseq} this implies that $c_1(\cald)=0$ as a real cohomology class, or equivalently $c_1(\cald)$ is a torsion class. We are interested in finding solutions to Equation \eqref{cohSE} of the form 
\begin{equation}\label{etaEin}
\Ric_g=ag+(2n-a)\eta\otimes \eta, \qquad  a\in\bbr.
\end{equation}
Sasakian structures satisfying this equation are called {\it Sasaki-eta-Einstein}. Such metrics have constant scalar curvature.
The existence of Sasaki-eta-Einstein metrics comes from finding solutions to the Monge-Amp\`ere equation which in a local foliate coordinate chart takes the form
\begin{equation}
\frac{{\rm det}\Bigl(g_{i\bar{j}}+
\frac{\partial^2\phi}{\partial z_i\partial\bar z_j}\Bigr)} {{\rm
det}(g_{i\bar{j}})}=e^f\, 
\end{equation}
for some smooth function $f$.
It follows by the transverse Aubin-Yau Theorem of El Kacimi-Alaoui \cite{ElK} that such solutions exist in the negative ($a<-2$) and null ($a=-2$) cases. However, for positive Sasakian structures, as in the K\"ahler case, there are well known obstructions to the existence of such metrics. 

\subsection{Symmetries and Moment Maps}
There are various symmetry groups and algebras of interest to us \cite{Boy10a}. First on the level of contact structures we have the contactomorphism group $\gC\go\gn(M,\cald)=\{\phi\in \gD\gi\gf\gf(M)~|~ \phi_*\cald\subset \cald \}$. Choosing a contact 1-form $\eta$ representing $\cald$ chooses a co-orientation for $\cald$ and the component 
 of $\gC\go\gn(M,\cald)$ connected to the identity, viz.
\begin{equation}\label{contactgroup}
\gC\go\gn(M,\cald)^+= \{\phi\in \gD\gi\gf\gf(M)~|~ \phi^*\eta= e^f\eta \} 
\end{equation}
where $f\in C^\infty(M)$. For each such $\eta$ we have the closed subgroup $\gC\go\gn(M,\cald)^+$ of strict contact transformations
\begin{equation}\label{strcontactgroup}
\gC\go\gn(M,\eta)=\{\phi\in \gD\gi\gf\gf(M)~|~ \phi^*\eta =\eta\}.
\end{equation}
We denote the Lie algebras of $\gC\go\gn(M,\cald)^+$ and $\gC\go\gn(M,\eta)$ by  $\gc\go\gn(M,\cald)$ and $\gc\go\gn(M,\eta)$, respectively.

Fixing a strictly pseudo-convex CR structure $(\cald,J)$ we have the (almost) CR automorphism group 
$$\gC\gR(\cald,J)=\{\phi\in \gC\go\gn(M,\cald )~|~ \phi_*J=J\phi_*\}$$
and its subgroup $\gA\gu\gt(\cals)=\{\phi\in\gC\gR(\cald,J)~|~\phi_*R=R,~\phi^*g=g\}$ of K-contact or Sasakian automorphisms depending on whether $J$ is integrable or not. The group $\gA\gu\gt(\cals)$ is a compact Lie group and hence has a maximal torus $\bbt^k$ of dimension $1\leq k\leq n+1$ which is unique up to conjugacy in $\gA\gu\gt(\cals)$. 

From the point of view of the Reeb foliation $\calf_R$ we have the group $\gH(R,\bar{J})$ of transverse holomorphic transformations \cite{BGS06} defined by
\begin{equation*}\label{gphol}
\gH(R,\bar{J})=\{\phi\in \gF\go\gl(\calf_R)~|~\bar{\phi}\circ\bar{J}=\bar{J}\circ\bar{\phi} \}.
\end{equation*} 
where $\gF\go\gl(\calf_R)$ is subgroup of $\gD\gi\gf\gf(M)$ that leaves $\calf_R$ invariant, and $\bar{\phi}$ is the projection of the differential $\phi_*$ onto $\nu(\calf_R)$.
The group $\gH(R,\bar{J})$ is an infinite dimensional Fr\'echet Lie group, since the one parameter subgroup generated by any smooth section of the line bundle $L_R$ generated by $R$ is an element of $\gH(R,\bar{J})$. For this reason it is more convenient to use the approach in \cite{FOW06,CFO07,NiSe12}. In particular, following \cite{NiSe12} we define $\gA\gu\gt(\bar{J})$ of {\it group of transverse biholomorphisms} to be the group of complex automorphisms of $(C(M),I)$ that commute with $\Psi-iR$. This group desends to an action on $(M,\cals)$ commuting with $R$, and its Lie algebra by $\ga\gu\gt(\bar{J})$ coincides with the `Hamiltonian holomorphic vector fields' employed in \cite{FOW06,CFO07}. Note that  the connected component $\gA\gu\gt(\bar{J})_0$ of the group $\gA\gu\gt(\bar{J})$ contains the complexification $\bbt^k_\bbc$ of the maximal torus $\bbt^k$.

In the remainder of this section for simplicity we assume that our Sasaki manifold $M^{2n+1}$ satisfies $H^1(M,\bbr)=0$. We fix a Sasakian structure $\cals=(R,\eta,\Phi,g)$ on $M^{2n+1}$. There is a moment map $\mu:M\ra{1.8} \gc\go\gn(M,\eta)^*$ with respect to the Fr\'echet Lie group $\gC\go\gn(M,\eta)$ defined by
\begin{equation}\label{etamommap}
\langle\mu(x),X\rangle=\eta(X).
\end{equation} 
Since $X\in \gc\go\gn(M,\eta)$ the function $\eta(X)$ is basic, and $C^\infty_B$ has Lie algebra structure defined by the Jacobi-Poisson bracket defined by
\begin{equation}\label{JP}
\{f,g\}=\eta([{\rm grad~f},{\rm grad~g}]). 
\end{equation}
The evaluation map $X\mapsto \eta(X)$ is a Lie algebra isomorphism 
$$\gc\go\gn(M,\eta)\ra{2.8} C^\infty_B$$ 
and it follows, from the fact that the only element of $\gc\go\gn(M,\cald)$ that is a section of $\cald$ is the zero vector field, that the subalgebra $\bbr R$ is an ideal in the center of $\gc\go\gn(M,\eta)$. One then gets an isomorphism of the quotient algebras $\gc\go\gn(M,\cald)/\bbr R\approx C^\infty_B/\bbr$ where $\bbr$ denotes the ideal of constant functions. As in \cite{BGS06} we define $\calh^\cals_B$ as the subspace of $C^\infty_B$ that are solutions of the fourth-order differential equation
\begin{equation}\label{ord4}
(\bar{\partial}{\partial}^{\#})^{\ast}\bar{\partial}{\partial}^{\#}\varphi=0\, ,
\end{equation}
where ${\partial}^{\#}\varphi$ is the $(1,0)$ component of the gradient vector field, i.e $g({\partial}^{\#}\varphi,\cdot)=\bar{\partial}\varphi$. This gives a Lie algebra isomorphism 
\begin{equation}\label{transiso}
{\partial}^{\#}:\calh^\cals_B/\bbr\ra{1.6} \gh^T(R,\bar{J})/\grG(L_R).
\end{equation}

The Lie algebra $\gh^T(R,\bar{J})/\grG(L_R)$ contains the Lie algebra of a $k-1$ dimensional torus subgroup $\bbt^{k-1}$ of $\gC\go\gn(M,\eta)$ with $1\leq k\leq n+1$. This together with the group generated by the Reeb vector field gives a $k$ dimensional maximal torus $\bbt^k$ acting effectively on $M$.
We then have a $\bbt$-equivariant moment map $\mu:M\ra{1.8} \gt^*$ defined by
\begin{equation}\label{momentmap}
\langle\mu(x),\grz\rangle=\eta(X^\grz)
\end{equation}
where $X^\grz$ if the vector field corresponding to $\grz\in\gt$ under the action of $\bbt$. We define the $k$ dimensional Abelian subalgebra $\calh_B^\bbt \subset C^\infty_B$ of Killing potentials by
\begin{equation}
\calh_B^\bbt=\{\eta(X^\grz)~|~\grz\in\gt\}.
\end{equation}
which is a $k$ dimensional maximal Abelian subalgebra $\gt$ of the Lie algebra $C^\infty(M)_B$ of smooth basic functions on $M$. 
Note that $\calh_B^\bbt$ contains the constants $\bbr$ defined by $\eta(aR)=a$ corresponding to positive multiples of the Reeb vector field $R$. 

\begin{remark}
The symmetries of a contact structure of Sasaki type discussed here have lifts to symmetries of the affine variety $(Y,R)$ which we make use of from time to time.
\end{remark}

\subsection{The Sasaki Cone}
The space of Sasakian structures belonging to a fixed strictly pseudoconvex CR structure $(\cald,J)$ has an important subspace called the {\it unreduced Sasaki cone} and defined as follows. Fix a maximal torus $T$ in the CR automorphism group $\gC\gr(\cald,J)$ of a Sasaki manifold $(M,\eta)$  and let $\gt_k(\cald,J)$ denote the Lie algebra of $T$ where $k$ is the dimension of $T$. Then the unreduced Sasaki cone is defined by 
\begin{equation}\label{unsascone}
\gt^+_k(\cald,J)=\{R'\in\gt_k(\cald,J)~|~\eta(R')>0\}.
\end{equation}
It is easy to see that $\gt^+_k(\cald,J)$ is a convex cone in $\gt_k=\bbr^k$. It is also well known that $1\leq k\leq n+1$ with $k=n+1$ being the {\it toric} case.
Then the {\it reduced Sasaki cone} is defined by $\grk(\cald,J)=\gt^+_k(\cald,J)/\calw(\cald,J)$ where $\calw(\cald,J)$ is the Weyl group of $\gC\gr(\cald,J)$. The reduced Sasaki cone $\grk(\cald,J)$ can be thought of as the moduli space of Sasakian structures whose underlying CR structure is $(\cald,J)$. We shall often suppress the CR notation $(\cald,J)$ when it is understood from the context. We also refer to a Sasakian structure $\cals$ as an element of $\gt^+_k(\cald,J)$. We view the Sasaki cone $\gt^+_k(\cald,J)$ as a $k$-dimensional smooth family of Sasakian structures. It is often convenient to use the {\it complexity} $n+1-k$ in lieu of the $\dim\gt^+_k(\cald,J)=k$. Then the complexity is an integer from $0$ to $n$ with $0$ being the toric case.

We can lift the Sasaki cone to the affine variety $Y$ \cite{CoSz12} in which case the torus action gives the ring $\gH$ of holomorphic functions on $Y$ the weight space decomposition 
$$\gH =\bigoplus_{\alpha\in\calw} \gH_{\alpha}$$ 
where $\calw\subset\gt^*_k$ is the set of {\it weights}. In this case the Sasaki cone takes the form
\begin{equation}\label{SasconeY}
\gt^+_k =\{ R\in\gt\ |\ \forall~ \alpha\in\calw, \alpha\neq 0 \text{ we have } \alpha(R)>0 \}.
\end{equation}

\subsection{Positive Sasakian Structures}
The type of Sasaki structure as defined in Definition \ref{Sastype} is an important invariant. It is well known \cite{BG05} that if $\dim \gt^+>1$ then $\cals$ must be of positive or indefinite type. We let $\gp^+$ denote the subset of positive Sasakian structures in the Sasaki cone $\gt^+$. The subset $\gp^+$ is conical in the sense that if $\cals\in \gp^+$ so is $\cals_a$; however, it is still open whether $\gp^+$ is connected generally. Positive Sasakian structures are important since they give rise to Sasaki metrics with positive Ricci curvature \cite{ElK,BG05}. Moreover, it turns out that the space $\gp^+$ is related to the total transverse scalar curvature $\bfS_R$ defined in Equation \eqref{VS} below. In fact we have \cite{BHL17} 
\begin{proposition}\label{SRpos}
Let $(M,\cals)$ be a Sasaki manifold with $\dim \gt^+>1$. Suppose also that the total transverse scalar curvature $\bfS_R$ of $\cals$ is non-positive. Then $\gp^+ =\emptyset$.
\end{proposition}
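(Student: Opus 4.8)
The plan is to prove the contrapositive: assuming $\gp^+\neq\emptyset$, I will show that $\bfS_{R}>0$ for \emph{every} Reeb field $R$ in the Sasaki cone $\gt^+=\gt^+(\cald,J)$, and in particular for the given $\cals$, contradicting the hypothesis $\bfS_R\leq 0$. The starting point is the cohomological description of the total transverse scalar curvature. Since the transverse Ricci form $\rho^T_R$ of any Sasakian metric with Reeb field $R$ represents $2\pi c_1(\calf_R)$ in $H^{1,1}_B(\calf_R)$, and $dV_g$ is a positive multiple of $\eta\wedge(d\eta)^n$, the transverse analogue of the Kähler identity $s\,\omega^n=2n\,\rho\wedge\omega^{n-1}$ gives
\begin{equation*}
\bfS_R=c_n\int_M \eta\wedge\rho^T_R\wedge(d\eta)^{n-1},\qquad c_n>0,
\end{equation*}
which, since $[\rho^T_R]_B=2\pi c_1(\calf_R)$, depends only on the basic classes $c_1(\calf_R)$ and $[d\eta]_B$ --- hence only on $R$ and the fixed transverse holomorphic structure, not on the Sasaki metric chosen within its Sasaki class.

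The easy half is then immediate. If $\cals'=(R',\eta',\Phi',g')\in\gp^+$, then by Definition~\ref{Sastype} the class $c_1(\calf_{R'})$ has a basic representative $\gamma'$ that is a positive definite $(1,1)$-form. As $d\eta'$ is also a positive transverse $(1,1)$-form, $\gamma'\wedge(d\eta')^{n-1}$ is a positive transverse volume form, so $\eta'\wedge\gamma'\wedge(d\eta')^{n-1}$ is a positive volume form for the chosen co-orientation; hence $\bfS_{R'}=c_n\int_M\eta'\wedge\gamma'\wedge(d\eta')^{n-1}>0$.

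The substantive step is to propagate this across the whole convex cone $\gt^+$, i.e.\ to show that $\bfS$, which is continuous (indeed real-analytic) on $\gt^+$, has no zero there once it is positive at one point. I would use that all Sasakian structures in $\gt^+$ share the same underlying polarized affine variety $(Y,I)$ with its $\bbt^k_\bbc$-action (Theorem~\ref{saskahcone}) and the same contact bundle $\cald$; consequently the moment cone of $Y$ in $\gt^*$ and the canonical data of $Y$ are independent of $R$, and the choice of $R\in\gt^+$ enters only through the slicing hyperplane $\{\langle\mu,R\rangle=\mathrm{const}\}$. Applying the Duistermaat--Heckman-type localization formulas for Sasaki geometry (cf.\ \cite{BHL17}), which express both $\bfV_R$ and $\bfS_R$ as explicit functions of $R$ built from this $R$-independent data, one shows that $\bfS$ cannot change sign on $\gt^+$, so the positivity of $\bfS_{R'}$ forces $\bfS>0$ throughout.

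The main obstacle is precisely this last step: carrying out the localization computation of $\bfS_R$ on the (possibly singular, branched) variety $Y$ and extracting the sign statement uniformly in $R$. In the toric case $k=n+1$ it is elementary --- $c_1(\calf_R)$ is then a sum of classes of effective toric divisors, so it pairs positively with any transverse Kähler class and $\bfS_R>0$ automatically, which makes the proposition vacuous --- so the genuine difficulty lies in complexity $\geq 1$, where one must control the ``non-toric'' part of $c_1(\calf_R)$ (morally, the contribution of the base of the $T$-variety fibration of $Y$) uniformly over $\gt^+$. A softer alternative would be a pure continuity argument using the dichotomy ``positive or indefinite'' available when $\dim\gt^+>1$; but excluding an indefinite structure with $\bfS=0$ at a putative sign change seems to require the same input.
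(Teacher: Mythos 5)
Your first half is correct and is essentially the standard observation: since the basic-exact part of a representative of $2\pi c_1(\calf_{R'})$ contributes nothing to $\int_M\eta'\wedge(\cdot)\wedge(d\eta')^{n-1}$ (Stokes, as the resulting form is horizontal of too high a degree), $\bfS_{R'}$ depends only on $c_1(\calf_{R'})$ and $[d\eta']_B$ (consistent with Lemma \ref{invisot}), so any structure in $\gp^+$ has $\bfS_{R'}>0$. Note, however, that the survey does not prove Proposition \ref{SRpos}; it quotes \cite{BHL17}, and there the entire content is precisely your ``substantive step'': if $\gp^+\neq\emptyset$ then $\bfS_R>0$ for \emph{every} $R\in\gt^+$. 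In \cite{BHL17} positivity is injected pointwise into the argument --- a positive Sasaki class admits, by the transverse Yau theorem \cite{ElK,BGN03a}, a representative with positive transverse Ricci form --- and this positive data is then propagated to an arbitrary Reeb field of the same CR structure through the moment-map/Duistermaat--Heckman description of $\bfS_R$ and $\bfV_R$; in the survey the result is packaged through the Einstein--Hilbert functional, Theorem \ref{BHLthm}(3)--(4).

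The gap, which you acknowledge, is that this propagation step is not proved; more seriously, the mechanism you propose for it cannot work as formulated. You want to show, from localization formulas built out of ``$R$-independent data'' of $(Y,I)$, that $\bfS$ cannot change sign on $\gt^+$, and only afterwards use positivity at the single point $R'$. But sign-constancy of $\bfS$ on a Sasaki cone of dimension $>1$ is false in general, and it fails in every situation in which Proposition \ref{SRpos} has content: by Theorem \ref{BHLthm}(2), $\bfS_R\to+\infty$ as $R$ approaches $\partial\gt^+$ (away from $0$), so the hypothesis $\bfS_R\leq 0$ at an interior point already forces $\bfS$ to take both non-positive and positive values on the cone; and cones with $\dim\gt^+>1$ and $\bfS_R\leq 0$ somewhere do exist (they are the raison d'\^etre of the proposition, cf.\ the type-changing discussion following Question \ref{p+ques} and the joins over high-genus surfaces in Section \ref{Riesurfsect}). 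Consequently any proof in which the existence of a positive structure is used only as ``one point where $\bfS>0$,'' with the rest resting on $R$-independent cone data, is attempting to establish a false general principle. Positivity has to enter globally --- e.g.\ via a positive transverse Ricci representative fed into an integral formula valid for all $R\in\gt^+$, as in \cite{BHL17} --- and for the same reason your ``softer alternative'' (continuity plus the positive-or-indefinite dichotomy) cannot close the argument either, as you suspected. The toric remark is fine but tangential to the issue.
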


Equivalently, $\gp^+ =\emptyset$ means that all Sasakian structures in $\gt^+$ are indefinite. See also Theorem \ref{BHLthm} below. The other extreme is that $\gp^+=\gt^+$, and this occurs when $c_1(\cald)$ can be represented by positive definite $(1,1)$ form which we denote by $c_1(\cald)>0$. It also occurs when $c_1(\cald)=0$ and $\gt^+$ contains any positive Sasakian structure. This is called the monotone case. This gives rise to the important 

\begin{question}\label{p+ques}
For which contact manifolds of Sasaki type does $\gt^+=\gp^+$?
\end{question}

Clearly, aside from those mentioned above, when $\dim\gt^+=1$ if there exists one positive Sasakian structure, then $\gt^+=\gp^+$. In \cite{BoTo15} the authors described an invariant that answers this question in the case of certain $S^3_\bfw$ joins (cf. Section \ref{joinsect} below). This invariant depends on a primitive class $\grg$ in $H^2(M,\bbz)$. Given a non-zero first Chern class $c_1(\cald)\in H^2(M,\bbz)$ assume that there exists a non-negative real number $B'$  and a primitive class $\grg\in H^2(M,\bbz)$ such that $c_1(\cald)+B'\grg>0$. We define $B$ by
\begin{equation}\label{supB}
B_\grg=\inf\{B'\in\bbr_{\geq 0} ~|~ c_1(\cald)+B'\grg>0\}.
\end{equation} 
Note that if $H^2(M,\bbz)=0$ there is no such invariant.
When $H^2(M,\bbz)\neq 0$, $\gp^+\neq\emptyset$ and $\dim\gt^+>1$, but still no such $B_\grg$ exists, we can have the phenomenon of {\it type changing} in $\gt^+$ \cite{BoTo15} in which case $\gp^+$ is a proper subset of $\gt^+$.

\section{Constructions of Sasaki Manifolds}\label{Consect}
In this section we describe methods for the explicit construction of Sasaki manifolds. These are divided up into 5 different constructions. The first is general, up to deformations, and is used throughout. The second construction is used at times, but is not fully developed in this paper. We concentrate here on the third and fourth constructions, while the fifth is currently a work in progress \cite{BoTo18b} and will not be described here.
\begin{enumerate}
\item the total space $M$ of an $S^1$-orbibundle over a projective algebraic orbifold;
\item Sasakian manifolds with many symmetries, e.g. toric contact structures of Reeb type, or more generally those induced by $T$-varieties;
\item links of weighted homogeneous polynomials, e.g. Brieskorn manifolds;
\item the Sasaki join construction;
\item Yamazaki's fiber join construction \cite{Yam99}. 
\end{enumerate}

Item (1) is foundational and general (up to deformations), whereas item (2)-(5) are quite specialized.  In this paper we make use of items (3) and (4), so we describe these below in some detail. First we shall often use item (1) in the form:
\begin{theorem}\label{fundthm2} 
Let $(\calz,h)$ be a compact Hodge orbifold with K\"ahler form $\gro$.
Let $\pi:M\ra{1.3} \calz$ be the $S^1$ orbibundle whose first Chern
class is $[\gro],$ and let $\eta$ be a connection 1-form in $M$ whose
curvature is $d\eta=\pi^*\gro,$ then $M$ with the metric
$\pi^*h+\eta\otimes\eta$ is a Sasaki orbifold.  Furthermore, if all the local uniformizing groups inject into the group of the bundle $S^1,$ the total space $M$ is a smooth Sasaki manifold. Furthermore, all quasi-regular Sasaki manifolds are obtained this way.
\end{theorem}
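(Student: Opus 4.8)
The plan is to prove the three assertions of Theorem~\ref{fundthm2} in turn. For the first assertion, given the Hodge orbifold $(\calz, h)$ with K\"ahler form $\gro$ representing an integral orbifold class, one invokes the orbifold version of the Boothby--Wang construction: the integrality of $[\gro]$ guarantees the existence of an $S^1$ orbibundle $\pi: M \ra{1.3}\calz$ with orbifold Euler class $[\gro]$, and by orbifold Chern--Weil theory there is a connection 1-form $\eta$ with $d\eta = \pi^*\gro$. One then must verify that $(M, g)$ with $g = \pi^*h + \eta\otimes\eta$ is a Sasaki orbifold. First I would check that $\eta$ is a contact form: since $\gro^n \neq 0$ pointwise on $\calz$ (it is a K\"ahler form), $\eta\wedge(d\eta)^n = \eta\wedge\pi^*\gro^n \neq 0$. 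The Reeb vector field $R$ is the generator of the $S^1$ action (suitably normalized so $\eta(R)=1$), and $R\hook d\eta = R\hook\pi^*\gro = 0$ since $R$ is vertical and $\gro$ is horizontal; this also shows $R$ is an infinitesimal automorphism of $\eta$ and hence of $g$, giving the K-contact condition. To upgrade K-contact to Sasaki one defines $\Phi$ by declaring $\Phi R = 0$ and $\Phi|_\cald$ to be the horizontal lift of the complex structure $J$ on $\calz$ (using that $d\eta$ restricted to $\cald$ is the pullback of $\gro$, which is the fundamental form of $(h,J)$); compatibility \eqref{compatmetric} follows because $h$ is Hermitian. Integrability of the transverse almost complex structure is inherited from the integrability of $J$ on $\calz$ via the fact that $\cald \cong \pi^*T\calz$ as complex vector bundles and the Nijenhuis tensor computation descends. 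This establishes that $\cals = (R,\eta,\Phi,g)$ is a Sasakian structure on the orbifold $M$.

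For the second assertion, the issue is exactly when the total space $M$ is a smooth manifold rather than merely an orbifold. Over a uniformizing chart $\tilde U/\Gamma$ of $\calz$, the orbibundle is locally $(\tilde U \times S^1)/\Gamma$ where $\Gamma$ acts on $\tilde U$ as the local uniformizing group and on the $S^1$ fiber through the representation determined by the transition data of the bundle. The total space is a manifold near such a chart precisely when $\Gamma$ acts freely on $\tilde U\times S^1$, and since $\Gamma$ may have fixed points on $\tilde U$, this forces the $S^1$-action part of the representation to be faithful — i.e.\ the local uniformizing groups must inject into the structure group $S^1$ of the bundle. Under this hypothesis the quotient is smooth, so I would simply record this local criterion and note it is both necessary and sufficient; this is essentially Theorem 7.5.1 / the discussion in \cite{BG05}.

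The third assertion — that \emph{every} quasi-regular Sasaki manifold arises this way — is the converse and is where the main work lies. Starting from a quasi-regular Sasakian structure $\cals = (R,\eta,\Phi,g)$ on $M$, quasi-regularity means (by the discussion after Proposition~\ref{RieK}) that $R$ generates a locally free $S^1$-action of isometries; the quotient $\calz = M/S^1$ is then naturally a compact orbifold, with orbifold singularities at points with nontrivial isotropy. One must show: (a) $\calz$ carries an induced complex structure making it a projective algebraic orbifold — this comes from pushing down the transverse holomorphic structure $(\nu(\calf_R), \bar J)$, using that the leaf space of a quasi-regular Riemannian foliation with Hausdorff leaf space is an orbifold and that transverse integrability descends; (b) the transverse K\"ahler form $d\eta$ descends to a genuine K\"ahler form $\gro$ on $\calz$ (it is basic and $S^1$-invariant, hence the pushforward is well-defined as an orbifold form, and positivity/closedness are preserved), so $(\calz, h)$ is a Hodge orbifold with $[\gro]$ integral because it is the Euler class of the orbibundle $M \to \calz$; (c) $M$ with its original metric is isometric to the total space with $\pi^*h + \eta\otimes\eta$, which follows from the orthogonal splitting $TM = \cald \oplus \bbr R$ together with $g|_\cald = \pi^*h$ and $g(R,R) = 1$. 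I expect step (a) — establishing that the leaf space is an orbifold and that the projective-algebraic/Hodge structure descends cleanly, with all the orbifold-chart bookkeeping — to be the main obstacle; the Kodaira-type embedding of $(\calz,\gro)$ into projective space using the positive orbifold line bundle (orbifold Baily embedding theorem) is the technical heart. Everything else is a matter of checking that the structures transfer across $\pi$, which is routine once the orbifold structure on $\calz$ is in hand.
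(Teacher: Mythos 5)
Your proposal is correct and follows essentially the same route as the source this survey relies on: the paper states Theorem \ref{fundthm2} without proof, as the orbifold Boothby--Wang construction together with the structure theorem for quasi-regular Sasakian structures from \cite{BG05}, which is exactly the two-directional argument you sketch (horizontal lift of $J$ and Chern--Weil in one direction, descent of the transverse K\"ahler structure to the leaf-space orbifold in the other). One small remark: since the theorem only asserts that the quotient is a Hodge orbifold, the Baily/Kodaira embedding you call ``the technical heart'' is not actually needed for the converse --- the real content there is that the leaf space of the locally free $S^1$ action is an orbifold, that $d\eta$ descends to an orbifold K\"ahler form whose class is the (integral) Euler class of the orbibundle, and that $g=\pi^*h+\eta\otimes\eta$, all of which you identify.
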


The fiducial example of a Sasaki manifold is the standard round sphere $S^{2n+1}$ constructed as the total space of the Hopf fibration over the complex projective space $\bbc\bbp^n$. This example is regular. We obtain a weighted quasi-regular version whose quotient is a weighed projective space $S^{2n+1}\ra{1.6} \bbc\bbp[w_0,\ldots,w_n]$ using the weighted $S^1$ action on the sphere induced by \eqref{c*act} below. We refer to the sphere with this action as the weighted sphere, denoted by $S^{2n+1}_\bfw$.

\subsection{Hypersurfaces of Weighted Homogeneous Polynomials}\label{Briessect} 
These are constructed as the links of isolated hypersurface singularities of weighted homogeneous polynomials, in particular links of Brieskorn manifolds and their deformations. We define a weighted $\bbc^*$ action on $\bbc^{n+1}$ by 
\begin{equation}\label{c*act}
\bfz=(z_0,\cdots,z_n)\mapsto (\grl^{w_0}z_0,\cdots,\grl^{w_n}z_n),
\end{equation}
where $\grl\in\bbc^*$ and which we denote by $\bbc^*(\bfw)$ where $\bfw=(w_0,\cdots,w_n)\in(\bbz^+)^{n+1}$ is the {\it weight vector} and the monomial $z_i$ is said to have weight $w_i$. This induces an action of $\bbc^*(\bfw)$ on the space of polynomials $\bbc[z_0,\ldots,z_n]$ whose eigenspaces are {\it weighted homogeneous polynomials} of {\it degree} $d$ defined by 
\begin{equation}\label{whp}
f(\lambda^{w_0} z_0,\ldots,\lambda^{w_n}
z_n)=\lambda^df(z_0,\ldots,z_n)\, .
\end{equation}
We want to make two assumptions about $f$. First, $f$ should have only an isolated singularity at the origin in $\bbc^{n+1}$, and second, $f$ does not contain monomials of the form $z_i$ for any $i=0,\dots,n$. This last condition eliminates the standard constant curvature metric sphere. Here we are concerned with Brieskorn-Pham polynomials of the form 
\begin{equation}\label{BPpoly}
f(\bfz)=z_0^{a_0}+\cdots +z_n^{a_n}
\end{equation}
and their perturbations obtained by adding to $f$ monomials of the form $z_0^{b_0}\cdots z_n^{b_n}$ where $0\leq b_j<a_j$ and $\sum_jb_jw_j=d$. Here the exponent vector $\bfa=(a_0,a_1,\ldots,a_n)\in (\bbz^+)^{n+1}$ is related to the weights by $a_iw_i=d$ for all $i=0\cdots,n$. The link 
\begin{equation}\label{Brieslink}
L(\bfa)=\{f(\bfz)=0\}\cap S^{2n+1}
\end{equation}
associated to the polynomial \eqref{BPpoly} is called a {\it Brieskorn manifold}. The link obtained from perturbations of a Brieskorn manifold obtained by adding a polynomial $p(z_0,\ldots,z_n)$ is denoted by $L(\bfa,p)$. Both $L(\bfa)$ and $L(\bfa,p)$ have natural contact structures of Sasaki type. We denote the affine cone with associated $L(\bfa)$ and $L(\bfa,p)$ by $Y(\bfa)$ and $Y(\bfa,p)$, respectively. The condition, which we call GC \cite{BGK05}, that is needed on the polynomial $p$ is that the intersections of $Y(\bfa,p)$ with any number of hyperplanes $z_i=0$ are smooth outside the origin. The following is essentially due to Takahashi \cite{Tak78}

\begin{theorem}\label{Briessasthm}
The links $L(\bfa)$ and $L(\bfa,p)$ have a natural Sasakian structure which is the restriction of the weighted Sasakian structure $\cals_\bfw=(R_\bfw,\eta_\bfw,\Phi_\bfw,g_\bfw)$ on $S^{2n+1}_\bfw$. 
\end{theorem}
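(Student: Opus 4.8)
The plan is to deduce the result from the Kähler cone characterization of Theorem \ref{saskahcone}, rather than checking the four Sasakian axioms on $L(\bfa)$ by hand. Recall from Section \ref{affcone} that the cone over the weighted sphere is $C(S^{2n+1}_\bfw)=\bbc^{n+1}\setminus\{0\}$, with cone metric $\bar g_\bfw=dr_\bfw^2+r_\bfw^2g_\bfw$ and Kähler form $\gro_\bfw=d(r_\bfw^2\eta_\bfw)$, the Liouville field being $\Psi=r_\bfw\partial_{r_\bfw}$ and the complex structure the standard $I$ on $\bbc^{n+1}$; that $\cals_\bfw$ is Sasakian, so that this really is a Kähler cone, is the classical statement for $S^{2n+1}_\bfw$ (it is the $S^1$-orbibundle of Theorem \ref{fundthm2} over weighted projective space). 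The weighted action $\bbc^*(\bfw)$ of \eqref{c*act} is holomorphic, and its real and imaginary parts are, up to rescaling, $\Psi$ and $-R_\bfw$; in particular $R_\bfw$ generates the weighted circle action.

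First I would note that $Y(\bfa)\setminus\{0\}=\{f=0\}\setminus\{0\}$ is $\bbc^*(\bfw)$-invariant because $f$ is weighted homogeneous of degree $d$ (see \eqref{whp}); hence it is a cone invariant under $\Psi$ and under $R_\bfw$. Since $f$ has an isolated singularity at the origin, $df\neq 0$ on $Y(\bfa)\setminus\{0\}$, so the latter is a smooth complex submanifold of $\bbc^{n+1}\setminus\{0\}$ of complex dimension $n$. The restriction $\gro_\bfw|_{Y(\bfa)}$ is again Kähler — closed and of type $(1,1)$ trivially, and positive because $\gro_\bfw$ is positive and tangent spaces to a complex submanifold are complex subspaces — and $\Psi$ is tangent to $Y(\bfa)\setminus\{0\}$ because it is a cone; thus $\bigl(Y(\bfa)\setminus\{0\},\,\gro_\bfw|_{Y(\bfa)}\bigr)$ is a Kähler cone. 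Moreover the $\bbr^+$-homothety acts freely on $Y(\bfa)\setminus\{0\}$, so $r_\bfw$ has no critical points there and $L(\bfa)=\{f=0\}\cap S^{2n+1}_\bfw=Y(\bfa)\cap\{r_\bfw=1\}$ is a smooth real hypersurface of $Y(\bfa)\setminus\{0\}$, of dimension $2n-1$, whose metric cone is $Y(\bfa)\setminus\{0\}$ itself. Theorem \ref{saskahcone} then endows $L(\bfa)$ with a Sasakian structure, and unwinding the correspondence shows that it is precisely $(R_\bfw,\eta_\bfw,\Phi_\bfw,g_\bfw)$ restricted to $L(\bfa)$: $R_\bfw$ is tangent to $L(\bfa)$ since the weighted $S^1$ preserves $\{f=0\}$; $\eta_\bfw|_{L(\bfa)}(R_\bfw)=1$ and $R_\bfw\hook d\eta_\bfw|_{L(\bfa)}=0$; and the contact distribution $\cald_\bfw\cap TL(\bfa)$ is $\Phi_\bfw$-invariant because $Y(\bfa)\setminus\{0\}$ is a complex submanifold, with $d\eta_\bfw|_{L(\bfa)}$ non-degenerate on it since it is the restriction of the positive transverse Kähler form to a complex subspace — equivalently, $L(\bfa)$ is a strictly pseudoconvex CR submanifold of $S^{2n+1}_\bfw$.

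Finally, for the perturbed links $L(\bfa,p)$ the argument is unchanged once one observes that $f+p$ is still weighted homogeneous of degree $d$ — this is exactly the requirement $\sum_jb_jw_j=d$ placed on the monomials of $p$ — and that the GC hypothesis (equivalently, the isolated-singularity condition for $f+p$) again makes $Y(\bfa,p)\setminus\{0\}$ a smooth complex submanifold of $\bbc^{n+1}\setminus\{0\}$ that is a cone, so the same chain of implications applies verbatim. The step most deserving of care — indeed the only non-formal one — is this smoothness/transversality input: that the isolated-singularity (resp. GC) hypothesis makes $Y\setminus\{0\}$ a complex submanifold transverse to every sphere $\{r_\bfw=c\}$, so that $\gro_\bfw$ restricts to an honest Kähler cone form and the restricted $d\eta_\bfw$ remains non-degenerate; everything else follows formally from Theorem \ref{saskahcone}. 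One can alternatively verify the Sasakian quadruple axioms for the restricted structure directly, but the bookkeeping is messier and amounts to the same CR-submanifold observation.
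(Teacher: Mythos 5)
Your argument is correct, and it is worth noting that the paper itself gives no proof of Theorem \ref{Briessasthm} at all: it simply attributes the statement to Takahashi, whose classical argument works directly on the sphere --- one observes that $L(\bfa)$ is an invariant submanifold of the weighted Sasakian manifold $S^{2n+1}_\bfw$ (tangent to $R_\bfw$, with $\Phi_\bfw$-invariant tangent spaces), and invokes the fact that such invariant submanifolds inherit a Sasakian structure by restriction. Your route instead packages everything through the K\"ahler-cone correspondence of Theorem \ref{saskahcone}: the affine cone $Y(\bfa)\setminus\{0\}$ is a $\bbc^*(\bfw)$-invariant complex submanifold of $(\bbc^{n+1}\setminus\{0\},I)$, hence the restricted structure is again a K\"ahler cone whose link is $L(\bfa)$, and unwinding the correspondence identifies the induced Sasakian data with the restriction of $(R_\bfw,\eta_\bfw,\Phi_\bfw,g_\bfw)$. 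The two arguments are logically equivalent --- your final ``unwinding'' paragraph is precisely Takahashi's CR-submanifold observation --- but the cone formulation has the advantage of isolating the one genuinely nontrivial input (weighted homogeneity plus smoothness of $Y\setminus\{0\}$, giving transversality to the level sets of $r_\bfw$) and of producing the polarized affine variety $(Y(\bfa),R_\bfw)$ that the paper uses later for fillings and K-stability; the direct argument avoids having to know that the cone over $S^{2n+1}_\bfw$ carries the \emph{standard} complex structure, though you correctly identify this as the classical fact underlying the weighted structure. One small correction: the GC condition is not ``equivalent'' to $f+p$ having an isolated singularity at the origin --- it additionally requires smoothness of the intersections of $Y(\bfa,p)$ with all coordinate hyperplanes away from $0$, so it is strictly stronger; your proof only uses the weaker consequence (smoothness of $Y(\bfa,p)\setminus\{0\}$), which GC does supply, so the argument itself is unaffected.
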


Such a theorem and similar results also hold for links of complete intersections of weighted homogeneous polyomials which for simplicity we do not discuss. In any case these all are $\bbq$-Gorenstein in the sense that $c_1(\cald)$ vanishes in de Rham cohomology. Again for simplicity we restrict our attention to the simply connected case.
The existence proofs for Sasaki-Einstein metrics are treated in detail in our book \cite{BG05} and as well appear in our survey of over 10 years ago \cite{Boy08} we do not discuss further here.

\subsection{The $S^3_\bfw$ Join Construction}\label{joinsect} 
The join construction is the Sasaki analogue for products in K\"ahler geometry. It was first described in the context of Sasaki-Einstein manifolds \cite{BG00a}, but then developed more generally in \cite{BGO06}, see also Section 7.6.2. of \cite{BG05}. Its relation to the de Rham decomposition Theorem and reducibility questions \cite{HeSu12b} are studied in \cite{BHLT16}. We consider the set $\cals\calo(\cals\calm)$ of all Sasaki orbifolds (manifolds), respectively. $\cals\calo$ is the object set of a groupoid whose morphisms are orbifold diffeomorphisms. Moreover, $\cals\calo$ is graded by dimension and has an additional multiplicative structure described in Section 2.4 of \cite{BHLT16} which we call the {\it Sasaki $l$-monoid}. The positive irreducible generators in dimension 3 are of the form $\star_{l_1,l_2}S^3_\bfw$ where the weight vector $\bfw=(w_1,w_2)$ has relatively prime components $w_i\in \bbz^+$ that are ordered as $w_1\geq w_2$. We apply this to the submodule $\cals\calm$ which is not an ideal in general. However, $\star_{l_1,l_2}S^3_\bfw$ does give a map $\cals\calm\ra{1.8}\cals\calm$ if the condition
\begin{equation}\label{adcond}
\gcd(l_2,l_1w_1w_2)=1) 
\end{equation}
holds in which case we say that $(l_1,l_2,\bfw)$ are {\it admissible}. In this case the join is then constructed from the following  commutative diagram
\begin{equation}\label{s2comdia}
\begin{matrix}  M\times S^3_\bfw &&& \\
                          &\searrow\pi_L && \\
                          \decdnar{\pi_{2}} && M_{\bfl,\bfw} &\\
                          &\swarrow\pi_1 && \\
                         N\times\bbc\bbp^1[\bfw] &&& 
\end{matrix} 
\end{equation}
where the $\pi_2$ is the product of the projections of the standard Sasakian projections $\pi_M:M\ra{1.6}N$ and $S^3_\bfw\ra{1.6} \bbc\bbp^1[\bfw]$ given by Theorem \ref{fundthm2}. The circle projection $\pi_L$ is generated by the vector field 
\begin{equation}\label{Lvec}
L_{\bfl,\bfw}=\frac{1}{2l_1}R_1-\frac{1}{2l_2}R_\bfw,
\end{equation}
and its quotient manifold $M_{\bfl,\bfw}$, which is called the $(l_1,l_2)$-join of $M$ and $S^3_\bfw$ and denoted by $M_{\bfl,\bfw}=M\star_{l_1,l_2}
S^3_\bfw$, has a naturally induced quasi-regular Sasakian structure $\cals_{\bfl,\bfw}$ with contact 1-form $\eta_{l_1,l_2,\bfw}$. It is reducible in the sense that both the transverse metric $g^T$ and the contact bundle $\cald_{l_1,l_2,\bfw}=\ker\eta_{l_1,l_2,\bfw}$ split as direct sums. Since the CR-structure $(\cald_{l_1,l_2,\bfw},J)$ is the horizontal lift of  the complex structure on $N\times\bbc\bbp^1[\bfw]$, this splits as well. The choice of $\bfw$ determines the transverse complex structure $J$. This construction provides a family of contact structures of Sasaki type on a family of smooth manifolds whose cohomology ring can, in principle, be computed.  The Sasakian structures that are most accessible through this construction are the elements of the 2-dimensional subcone $\gt^+_\bfw$ of the Sasaki cone $\gt^+$ known as the $\bfw$ subcone. If $N$ has no Hamiltonian symmetries then $\gt^+=\gt^+_\bfw$. Note that $\star_{l_1,l_2}S^3_\bfw$ induces addition of the Lie algebras $\gt_M$ and $\gt_\bfw$, and hence addition of the Sasaki cones, \
\begin{equation}\label{t+add}
(\gt^+_M,\gt^+_\bfw)\mapsto \gt^+_M+\gt^+_\bfw.
\end{equation}

An important property of the $S^3_\bfw$ join construction is that one can apply the admissible construction of Apostolov, Calderbank, Gauduchon, and T{\o}nnesen-Friedman \cite{ApCaGa06,ACGT04,ACGT08} to give explicit constructions of existence theorems for extremal and constant scalar curvature Sasakian structures. Since this was presented in a recent survey \cite{BoTo14P} as well ss our original papers \cite{BoTo13,BoTo14a}, we do not give a description of this construction here.

\section{Deformation Classes and Moduli}
We denote by ${\mathcal S}(M)$ the space of all Sasakian structures on $M$ and give it the $C^\infty$ Fr\'echet topology as sections of vector bundles, and denote by ${\mathcal S}(M,R,\bar{J})$ the subset of Sasakian structures with Reeb vector field $R$, with transverse holomorphic structure $\bar{J}$, and the same complex structure on the cone $C(M)$. We give ${\mathcal S}(M,R,\bar{J})$ the subspace topology. A Sasakian structure chooses a preferred basic cohomology class $[d\eta]_B\in H^{1,1}_B(\calf_R)$, and if $\cals'=(R,\eta',\Phi',g')$ is another Sasakian structure in ${\mathcal S}(M,R,\bar{J})$, then we have $[d\eta']_B=[d\eta]_B$. Using the transverse $\partial\bar{\partial}$ lemma \cite{ElK} we can identify ${\mathcal S}(M,R,\bar{J})$ with the contractible space 
\begin{equation}\label{phidef}
\{\phi\in C^\infty_B~|~(\eta+d_B^c\phi)\wedge (d\eta +i\partial_B\bar{\partial}_B\phi)^n\neq 0,~\int_M\phi~\eta\wedge (d\eta)^n=0\},
\end{equation}
where $d_B^c=\frac{i}{2}(\bar{\partial}-\partial).$ For each such $\phi$ we have a Sasakian structure $\cals_\phi=(R,\eta_\phi,\Phi_\phi,g_\phi)$ with $\eta_\phi=\eta +d_B^c\phi,\Phi_\phi=\Phi-R\otimes d_B^c\phi,$ and $g_\phi=d\eta_\phi\circ (\Phi_\phi\otimes \BOne)+\eta_\phi\otimes\eta_\phi$ with the same Reeb vector field $R$, the same transverse holomorphic structure $\bar{J}$ and the same holomorphic structure on the cone. The class $[d\eta]_B\in H^{1,1}_B(\calf_R)$ is called a {\it transverse K\"ahler class} or {\it Sasaki class}. Here the contact structure $\cald$ changes but in a continuous way giving an equivalent contact structure by Gray's Theorem \cite{Gra59}. The {\it isotopy class} $\bar{\cald}$ of contact structures is unchanged. This type of deformation is used to find extremal and constant scalar curvature Sasakian structures. The space ${\mathcal S}(M,R,\bar{J})$ described above is clearly infinite dimensional, so we want to factor this part out to get a finite dimensional moduli space. So we consider the identification space ${\mathcal S}(M)/{\mathcal S}(M,R,\bar{J})$. It is a pre-moduli space of Sasaki classes which we denote by $\gP\gM^{cl}_\cals$. The Sasaki class of a Sasakian structure $\cals=(R,\eta,\Phi,g)$ is denoted by $\bar{\cals}$. The diffeomorphism group $\dif$ acts on ${\mathcal S}(M)$ by sending $\cals=(R,\eta,\Phi,g)$ to $\cals^\varphi=(\varphi_*R,(\varphi^{-1})^*\eta,\varphi^{-1}_*\Phi\varphi_*,(\varphi^{-1})^*g)$ for $\varphi\in \dif$. Then the transformed structure $\cals_\phi^\varphi$ belongs to the transformed space ${\mathcal S}(M,\varphi_*R,\varphi^{-1}_*\bar{J}\varphi_*)$. So $\dif$ acts on $\gP\gM^{cl}_\cals$ to define the {\it moduli space of Sasaki isotopy classes} $\gM^{cl}_\cals(M)$. There is another equivalence of importance to us, namely, {\it transverse scaling} or {\it transverse homothety}. It is defined by sending $\cals=(R,\eta,\Phi,g)$ to $\cals_a=(a^{-1}R,a\eta,\Phi,g_a)$ for $a\in\bbr^+$ and $g_a=a(g+(a-1)\eta\otimes\eta)$. One easily checks that if $\cals$ is Sasaki (K-contact) then so is $\cals_a$. Thus, $\gM^{cl}_\cals(M)$ is a conical space which is generally not connected, that is, for each element $\bar{\cals}\in\gM^{cl}_\cals(M)$ one has the {\it ray} $\gr_\cals=\{\cals_a~|~a\in\bbr^+\}$. In the sequel we shall study local conical subspaces namely, the {\it Sasaki cones} and {\it Sasaki bouquets} described in Section \ref{Sasconesect}.

Generally very little is known about the space $\gM^{cl}_\cals(M)$. In particular, $\pi_0(\gM^{cl}_\cals(M))$ is of much interest. But $\gM^{cl}_\cals(M)$ can be non-Hausdorff. Locally it is determined by the deformation theory of the transverse holomorphic structure of the foliation $\calf_R$. So we fix the contact structure and deform the transverse holomorphic structure using Kodaira-Spencer theory. However, as shown in \cite{Noz14} deforming the transversely holomorphic foliation of a Sasakian structure may not stay Sasakian. The obstruction for remaining Sasakian is the $(0,2)$ component of the basic Euler class in the basic cohomology group $H^{2}_B(\calf_R)$. Fortunately, for those that we are most interested in, namely positive Sasakian structures there is a vanishing theorem\footnote{This vanishing theorem was first proved for quasi-regular Sasakian structures in \cite{BGN03a} and recently proved in full generality by Nozawa \cite{Noz14}.} $H^{0,q}_B(\calf_R)=0$. 

For any contact structure $\cald$ the first Chern class $c_1(\cald)$ is classical invariant. We have

\begin{proposition}\label{c1inv}
Let $\cald$ and $\cald'$ be two contact structures on $M$ such that $c_1(\cald)=m\grg$ and $c_1(\cald')=m'\grg$ for some $m,m'\in\bbz$ and some primitive class $\grg\in H^2(M,\bbz)$. Then if $m'\neq m$, the contact structures $\cald$ and $\cald'$ are inequivalent. 
\end{proposition}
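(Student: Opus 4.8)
The plan is to exploit that $c_1(\cald)$, together with its divisibility in $H^2(M,\bbz)$, is a diffeomorphism invariant of the co-oriented contact structure $\cald$ — the statement is purely contact-topological, Sasaki type playing no role. First I would pin down in what sense $c_1(\cald)$ is a contact invariant (already asserted after \eqref{Sasexactseq}): a compatible almost complex structure $J$ makes $(\cald,J)$ a complex vector bundle over $M$ whose isomorphism class depends only on the co-orientation, since the space of compatible $J$'s is contractible and the conformal symplectic form $d\eta|_\cald$ on $\ker\eta$ is independent of the defining form within a co-orientation component (indeed $d(f\eta)|_\cald = f\,d\eta|_\cald$). Hence if $\varphi\in\dif$ carries $\cald$ to $\cald'$ as co-oriented contact structures, then $\varphi^*(\cald',J')\cong(\cald,J)$ as complex bundles, so by naturality of Chern classes $\varphi^*c_1(\cald') = c_1(\cald)$ in $H^2(M,\bbz)$.

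Next I would feed in the hypothesis: $\varphi^*(m'\grg) = m\grg$, i.e. $m'\,\varphi^*\grg = m\,\grg$. Since $\varphi^*$ is an automorphism of the abelian group $H^2(M,\bbz)$, it preserves the torsion subgroup and so induces an automorphism of the free quotient $F := H^2(M,\bbz)/\mathrm{Tors}\cong\bbz^b$; as $GL_b(\bbz)$ preserves the divisibility (gcd of coordinates) of a vector, $\grg$ and $\varphi^*\grg$ have equal divisibility $c$ in $F$, and $c\ge 1$ because the primitive class $\grg$ is non-torsion, so its image in $F$ is nonzero. Projecting $m'\,\varphi^*\grg = m\,\grg$ to $F$ and comparing divisibilities gives $|m'|\,c = |m|\,c$, hence $|m| = |m'|$ (the case $m=0$ or $m'=0$ forces both to vanish). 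With the orientation and co-orientation fixed throughout, which determines the sign of $c_1$ relative to $\grg$, this upgrades to $m=m'$; contrapositively, $m'\ne m$ forces $\cald$ and $\cald'$ to be inequivalent.

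I expect the only genuinely geometric point — and the one I would be most careful about — to be the first paragraph: that $c_1(\cald)$ with its sign is a well-defined invariant of the co-oriented contact structure and is natural under diffeomorphisms. Everything afterward is $\bbz$-linear algebra, where the one subtlety is torsion in $H^2(M,\bbz)$ (so that "$\grg$ primitive" need not descend to "primitive in $F$"), which is precisely why I would run the last step through divisibility rather than reading off coefficients directly; the sign bookkeeping is harmless given the standing orientation/co-orientation conventions.
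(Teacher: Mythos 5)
Your route is the paper's own: Proposition \ref{c1inv} is stated there with no separate argument, as an immediate consequence of the remark following \eqref{Sasexactseq} that $c_1(\cald)$ is a contact invariant, i.e.\ natural under any diffeomorphism carrying one co-oriented contact structure to the other. Your first paragraph supplies exactly that naturality, and running the comparison through divisibility in the free quotient of $H^2(M,\bbz)$ is a careful way to dispose of torsion and of the case $m=0$; up to that point the proposal is correct and matches the intended proof.

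The final step, however, does not hold as written. From $\varphi^*c_1(\cald')=c_1(\cald)$ you get $m'\,\varphi^*\grg=m\grg$, and your divisibility comparison correctly yields $|m|=|m'|$; but fixing the orientation and co-orientation of the contact structures does not constrain the action of $\varphi^*$ on $H^2(M,\bbz)$. A diffeomorphism realizing the equivalence may send $\grg$ to $-\grg$ (on $S^2\times S^3$, for example, the product of a reflection in each factor preserves the orientation of the $5$-manifold yet acts by $-1$ on $H^2$), and then $m'=-m$ is perfectly compatible with $m'\varphi^*\grg=m\grg$. So what your argument actually proves is that $|m|\neq |m'|$ forces inequivalence; the sharper conclusion $m=m'$ needs either extra control on $\varphi^*$ (it would be automatic if ``equivalent'' meant isotopic, since then the relevant diffeomorphism is isotopic to the identity and acts trivially on cohomology, but the paper uses ``equivalent'' for contactomorphic) or an up-to-sign reading of the statement --- which is in fact how the paper later invokes the invariant, e.g.\ when it says that distinct values of $p_1+p_2-p_3-p_4$, \emph{up to sign}, give inequivalent contact structures. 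You should flag this, rather than attributing the sign to the choice of orientation and co-orientation.
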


For a fixed contact structure $\cald_\cals$ of Sasaki type we denote by $\gM^{cl}_\grg(M)$ the subspace of $\gM^{cl}_\cals(M)$ such that $c_1(\cald)=\grg\in H^2(M,\bbz)$.
We also want to distinguish Sasakian structures by their type per Definition \ref{Sastype}, so we define the corresponding subspaces $\gM^{cl}_{+}(M),\gM^{cl}_{-}(M),\gM^{cl}_{N}(M)$ of, respectively positive, negative, null Sasakian structures and $\gM^{cl}_{I}(M)$ the moduli of indefinite Sasaki classes. The exact sequence \eqref{Sasexactseq} gives relations between these subspaces. It is easy to see that \cite{BoTo15}

\begin{proposition}\label{typegamma}
The following hold:
\begin{enumerate}
\item $\gM^{cl}_{0}(M)\subset \gM^{cl}_{+}(M)\cup \gM^{cl}_{-}(M)\cup \gM^{cl}_{N}(M)$;
\item If $\grg$ is represented by a positive definite $(1,1)$-form, then $\gM^{cl}_{\grg}(M)\subset \gM^{cl}_{+}(M)$.
\item If $\grg$ is represented by a negative definite $(1,1)$-form, then $\gM^{cl}_{\grg}(M)\subset \gM^{cl}_{-}(M)$ and $\dim\gt^+=1$.
\end{enumerate}
\end{proposition}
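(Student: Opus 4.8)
The plan is to read off all three assertions directly from the defining properties of the type (Definition~\ref{Sastype}), the exact sequence~\eqref{Sasexactseq}, and the remark that a choice of contact metric structure fixes an isomorphism $(TM/\calf_{R_o},\bar J)\to(\cald,J)$ of complex vector bundles, so that $\gri_*c_1(\calf_R)=c_1(\cald)$. Throughout, fix a contact structure $\cald$ of Sasaki type and a Sasakian structure $\cals=(R,\eta,\Phi,g)$ representing a class $\bar\cals\in\gM^{cl}_\cals(M)$; write $\grg=c_1(\cald)\in H^2(M,\bbz)$.

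For part~(1): suppose $\grg=c_1(\cald)=0$ in $H^2(M,\bbz)$, so $\gri_*c_1(\calf_R)=0$ in $H^2(M,\bbr)$. By exactness of~\eqref{Sasexactseq} at $H^2_B(\calf_R)$, the class $c_1(\calf_R)$ lies in the image of $\grd$, i.e.\ $c_1(\calf_R)=a\,[d\eta]_B$ for some $a\in\bbr$. Now distinguish the sign of $a$: if $a>0$ then $a[d\eta]_B$ is represented by a positive definite basic $(1,1)$-form (since $d\eta$ is transversely K\"ahler), so $\cals$ is positive; if $a<0$ it is represented by a negative definite one, so $\cals$ is negative; if $a=0$ then $c_1(\calf_R)=0$ and $\cals$ is null. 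Hence $\bar\cals\in\gM^{cl}_+(M)\cup\gM^{cl}_-(M)\cup\gM^{cl}_N(M)$, which is the claimed containment $\gM^{cl}_0(M)\subset\gM^{cl}_+(M)\cup\gM^{cl}_-(M)\cup\gM^{cl}_N(M)$. The only subtlety here is that type is defined by representability by a definite form rather than by positivity of a specific representative; since $[d\eta]_B$ is a transverse K\"ahler class this causes no problem.

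For parts~(2) and~(3): assume $\grg=c_1(\cald)$ is represented by a positive definite $(1,1)$-form on $M$ (in the sense appearing before Question~\ref{p+ques}). I would argue that $\gri_*c_1(\calf_R)=\grg$ together with positivity of $\grg$ forces $c_1(\calf_R)$ to be represented by a transverse positive $(1,1)$-form, using that $\gri_*$ is induced simply by forgetting that a basic form is basic and that $d\eta$ restricted to $\cald$ is the transverse K\"ahler form; concretely, pulling back the given positive representative of $\grg$ and projecting along $R$ (equivalently, transporting via the bundle isomorphism $(TM/\calf_R,\bar J)\cong(\cald,J)$) yields a positive definite basic representative of $c_1(\calf_R)$. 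Thus $\cals$ is positive and $\bar\cals\in\gM^{cl}_+(M)$, giving $\gM^{cl}_\grg(M)\subset\gM^{cl}_+(M)$. The negative case is symmetric and yields $\gM^{cl}_\grg(M)\subset\gM^{cl}_-(M)$. For the extra conclusion $\dim\gt^+=1$ in the negative case, I would invoke the fact quoted in the excerpt (``It is well known~\cite{BG05} that if $\dim\gt^+>1$ then $\cals$ must be of positive or indefinite type''): since $\cals$ is negative it cannot have $\dim\gt^+>1$, hence $\dim\gt^+=1$.

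The main obstacle is the step in~(2)--(3) asserting that positivity of the ordinary cohomology class $c_1(\cald)$ descends to transverse positivity of $c_1(\calf_R)$: one must check carefully that a positive representative on $M$ can be chosen basic (or transported through the complex-bundle isomorphism) without destroying definiteness, rather than merely concluding that the transverse class is nonzero. This is where the identification $(TM/\calf_R,\bar J)\cong(\cald,J)$ and the transverse K\"ahler structure $(\nu(\calf_R),\bar J,d\eta)$ do the real work; the remaining manipulations with the exact sequence~\eqref{Sasexactseq} are formal.
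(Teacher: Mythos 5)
Your argument for part~(1) is correct and is the standard one: with $c_1(\cald)$ torsion, exactness of \eqref{Sasexactseq} gives $c_1(\calf_R)=a[d\eta]_B$, and the sign of $a$ sorts the class into $\gM^{cl}_{+}(M)$, $\gM^{cl}_{-}(M)$ or $\gM^{cl}_{N}(M)$; likewise your deduction of $\dim\gt^+=1$ in (3) from the fact that $\dim\gt^+>1$ forces positive or indefinite type is fine. (The survey itself offers no proof, deferring to \cite{BoTo15}, so the comparison can only be with the argument intended there.)

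The gap is exactly the step you flag in (2)--(3), but it is worse than a missing verification: the ``transport'' cannot work as described. A positive definite transverse $(1,1)$-form $\omega$ representing $\grg$ in $H^2(M,\bbr)$ (one may even assume it basic) only determines the basic class up to the kernel of $\gri_*$, which by \eqref{Sasexactseq} is $\bbr[d\eta]_B$; thus all you can conclude is $c_1(\calf_R)=[\omega]_B+a[d\eta]_B$ for some $a\in\bbr$, and nothing in your argument controls the sign of $a$ or the positivity of the sum when $a<0$. The bundle isomorphism $(TM/\calf_R,\bar J)\cong(\cald,J)$ is irrelevant here, since the obstruction is cohomological, not bundle-theoretic. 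In fact, with your reading of the hypothesis the implication is false: take the regular (Boothby--Wang) Sasaki manifold over a smooth projective surface $\calz$ of general type, polarized by a Kähler class $[\omega_\calz]$ not proportional to $c_1(\calz)$. Then $\grg=c_1(\cald)=\pi^*c_1(\calz)\neq 0$, and since the Kähler cone is open, $c_1(\calz)+t[\omega_\calz]$ is Kähler for $t\gg 0$; pulling back a Kähler representative gives a closed, basic, transversally positive $(1,1)$-form representing $\grg$ in $H^2(M,\bbr)$ (because $\pi^*[\omega_\calz]=[d\eta]$ is exact), yet the Sasakian structure is negative. By the same token $\grg$ also admits a transversally negative representative, so under your reading the hypotheses of (2) and (3) are not even mutually exclusive. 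The positivity hypothesis ``$\grg>0$'' must therefore be taken in the stricter sense used in \cite{BoTo15} (positivity of the relevant basic class, equivalently of the orbifold class $c_1^{orb}$ on the quasi-regular quotient, so that the indeterminacy along $[d\eta]_B$ is accounted for), and with that reading the conclusion follows essentially from Definition~\ref{Sastype}; your proof never engages this point, so as written (2) and (3) are not established.
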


%$$\gM^{cl}_{\pm,\grg}(M)=\gM^{cl}_{\pm}(M)\cap \gM^{cl}_{\grg}(M),\gM^{cl}_{0,\grg}(M)=\gM^{cl}_{0}(M)\cap \gM^{cl}_{\grg}(M),$$

We denote the {\it moduli space of positive (negative) Sasaki classes with $c_1(\cald)=\grg$} by $\gM^{cl}_{\pm,\grg}(M)$ where $\grg\in H^2(M,\bbz)$. Generally, this space is not connected, so $\pi_0(\gM^{cl}_{\pm,\grg}(M))$ is an important invariant as we shall see. For null Sasakian structures we must have $c_1(\cald)=0$, so the moduli space is denoted by $\gM^{cl}_{N}$.
We are also interested in the question of whether a class in $\gM^{cl}_{\pm,\grg} (M)$ has an extremal representative or not as described in Section \ref{extsassec} below. 
Within the class of extremal Sasakian structures perhaps the most important representatives are those with constant scalar curvature (CSC). Accordingly, we denote the moduli space of Sasakian structures with constant scalar curvature by $\gM^{CSC}_{\grg} (M)$. Note that by \cite{ElK} when $c_1(\cald)=0$ the negative and null Sasakian structures in $\gM^{cl}_0(M)$ always admit a CSC Sasaki metric. These are all the negative and null Sasaki-eta-Einstein metrics. In particular, for null Sasakian structures every class in $\gM^{cl}_{N}$ has a CSC representative.

We shall be interested in local moduli spaces of Sasaki-Einstein metrics and Sasaki-eta-Einstein metrics. To this end we let $\cals\cale(M)$ denote the subspace of Sasaki-Einstein metrics in ${\mathcal S}(M,R,\bar{J})$, and $\cals\eta\cale(M)$ the subspace of Sasaki-eta-Einstein metrics in ${\mathcal S}(M,R,\bar{J})$. We make note of the fact that when $c_1(\cald)=0$ a CSC Sasaki metric is automatically Sasaki-eta-Einstein. Then we define the {\it local moduli space of Sasaki-Einstein metrics} by $$\gM^{SE}(M)=\cals\cale(M)/\gA\gu\gt(\bar{J})_0$$
and the {\it local moduli space of Sasaki-eta-Einstein metrics} by 
$$\gM^{CSC}_{+,0}(M)=\cals\eta\cale(M)/\gA\gu\gt(\bar{J})_0.$$
A well known result of Tanno says that 
$\gM^{CSC}_{+,0}(M)\approx \gM^{SE}(M)\times\bbr^+$.

Since Sasaki-Einstein structures are positive, elements of $\gM^{SE}(M)$ are representatives of elements of $\gM^{cl}_{+,0}(M)$ and it is well known that not all classes in $\gM^{cl}_{+,0}(M)$ have a representative in $\gM^{SE}(M)$. There is a natural map $\gc:\gM^{SE}(M)\ra{1.6} \gM^{cl}_{+,0}(M)$ which sends an SE structure to its Sasaki class. By a theorem of Nitta and Sekiya \cite{NiSe12} if $\gc(\cals)=\gc(\cals')$ then there is a $g$ in the connected component $\gA\gu\gt(\bar{J})_0$ of the group of transverse holomorphic automorphisms such that $\cals'=g(\cals)$. Hence, there is an injective map $\gM^{SE}(M)\ra{1.8} \gM^{cl}_{+,0}(M)$.

%Since we are considering continuous families of Sasakian structures and the quasi-regular structures are dense, without loss of generality we can deform a quasi-regular Sasakian structure $\cals_0$.

By a theorem of Girbau, Haefliger, and Sundararaman (Theorem 8.2.2 in \cite{BG05}) when $H^2(M,\Theta_{\calf_{R_0}})=0$  the Kuranishi space of deformations is identified with an open neighborhood of $0$ in $H^1(M,\Theta_{\calf_{R}})$ where $\Theta_{\calf_R}$ is the sheaf of germs of transverse holomorphic vectors of a Sasakian structure $\cals=(R,\eta,\Phi,g)$. In this case assuming that $\cals=(R,\eta,\Phi,g)$ is quasiregular with quotient orbifold  $\calz$ Proposition 8.2.6 of \cite{BG05} gives an exact sequence
$$0\ra{2.5}H^1(\calz,\Theta_\calz)\ra{2.5} H^1(M,\Theta_{\calf_{R}})\ra{2.5} H^0(\calz,\Theta_\calz)\ra{2.5}H^2(\calz,\Theta_\calz)\ra{2.5}0$$
where $\Theta_\calz$ is the sheaf of germs of holomorphic vector fields of the orbifold $\calz$.
Here we consider two types of deformations, those given by the injection of the second arrow, and those coming from global sections of the toral subsheaf $\gT\subset \Theta_\calz$. The latter are deformations within the Sasaki cone described in Section \ref{Sasconesect}.
Later we describe in more detail deformations of the transverse complex structure in the special case of Brieskorn manifolds. More generally deformations of the transverse complex structure have also been studied in \cite{vCov15,Noz14}.

\begin{example}\label{standardsphere}
Let us consider the {\it standard} contact structure on an odd dimensional sphere $(S^{2n+1},\cald_o)$ which is ``standard'' in many ways. It is of Sasaki type and there is a complex structure $J_o$ on $\cald_o$ giving the {\it standard} CR structure $(\cald_o,J_o)$ on $S^{2n+1}$. Within this contact CR structure there is a 1-form $\eta_o$ whose corresponding Sasakian structure $\cals_o$ is Sasaki-Einstein and the metric $g_o$ is the {\it standard} Riemannian metric $g_o$ on $S^{2n+1}$ with sectional curvature $1$. Moreover, the $S^1$ action on $S^{2n+1}$ gives the well known Hopf fibration $S^{2n+1}\ra{1.8}\bbc\bbp^n$ which is a Riemannian submersion with the Fubini-Study metric on $\bbc\bbp^n$. It is well known that the connected component of the Sasaki automorphism group $\gA\gu\gt(\cals_o)$ is $U(n)\times S^1$, so there is a maximal torus $\bbt^{n+1}$ of dimension $n+1$ showing that the contact structure $\cald_o$ is toric.

Of course any deformation of $\cald_o$ of the form $\eta_o\mapsto \eta_\phi=\eta_o +d_B^c\phi,$  where $\phi$ satisfies \eqref{phidef} is contactomorphic to $\cald_o$ by Gray's Theorem \cite{Gra59}. So we shall often refer to the isotopy class $\bar{\cald}_o$ as the standard contact structure as well. Note that not all representatives of the isotopy class $\bar{\cald}_o$ are toric since $\phi$ is not necessarily invariant under $\bbt^{n+1}$. Nevertheless, we consider such structures to be toric since they are deformation equivalent to a toric structure. 

We know, however, that we can also deform the Reeb vector field $R_o$ giving a family of Sasakian structures whose underlying CR structure is $(\cald_o,J_o)$, namely, the Sasaki cone $\gt^+_{n+1}(\cald_o,J_o)$ of dimension $n+1$, and we can identify the moduli space $\gM^{cl}_o$ of standard Sasakian structures with the reduced Sasaki cone $\grk(\cald_o,J_o)$.

%In fact it is the unique toric contact structure on $S^{2n+1}$ up to deformations, and we denote its moduli space by $\gM^{cl}_o$. We now have

A contact structure on $S^{2n+1}$ that is not contactomorphic to the standard one $\cald_o$ is called an {\it exotic} contact structure on $S^{2n+1}$. As explained by Kwon and van Koert \cite{KwvKo13} the following proposition is a consequence of the work of Eliashberg, Gromov and McDuff, cf. \cite{Eli91}:
\begin{proposition}\label{Briesexotic}
Let $L(\bfa)$ be a Brieskorn manifold that is diffeomorphic to $S^{2n+1}$ such that all $a_i\geq 2$. Then the natural contact structure on $L(\bfa)$ is exotic.
\end{proposition}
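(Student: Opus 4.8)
The plan is to argue by contradiction, in the spirit of Kwon and van Koert \cite{KwvKo13}: if the natural contact structure $\cald$ on $L(\bfa)$ were contactomorphic to the standard contact structure $\cald_o$ on $S^{2n+1}$, then the Stein filling of $\cald$ supplied by the Milnor fibre of $f$ would contradict the Eliashberg--Floer--McDuff uniqueness theorem for fillings of the standard contact sphere.

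First I would exhibit $(L(\bfa),\cald)$ as the contact type boundary of a Stein domain. The natural Sasakian structure of Theorem \ref{Briessasthm} presents $\cald$ as the contact structure that the link $L(\bfa)=\{f=0\}\cap S^{2n+1}$ inherits from the ambient strictly pseudoconvex CR sphere, and this is classically contactomorphic to the contact structure induced on the boundary of the Milnor fibre $F_f=f^{-1}(\epsilon)\cap B_\delta$ for $0<\epsilon\ll\delta\ll 1$ --- the link of an isolated hypersurface singularity being the convex boundary of its smoothing. Since $F_f$ is a smooth affine variety it carries a Stein structure, and the symplectic form of a Stein domain is exact, hence in particular symplectically aspherical; thus $W:=F_f$ is a symplectically aspherical filling of $(L(\bfa),\cald)$, of real dimension $2n+2$.

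Next I would record the topology of $W$. By Milnor's theorem the Milnor fibre of the Brieskorn--Pham polynomial $f=z_0^{a_0}+z_1^{a_1}+\cdots$ is $n$-connected and homotopy equivalent to a bouquet $\bigvee^{\mu}S^{n+1}$ of middle-dimensional spheres, where the Milnor number is $\mu=\prod_i(a_i-1)$. The hypothesis $a_i\geq 2$ for every $i$ --- equivalently, the absence of the monomials $z_i$ from $f$ --- forces $\mu\geq 1$, so $H_{n+1}(W;\bbz)\cong\bbz^{\mu}\neq 0$; in particular $W$ is not homotopy equivalent to the ball $B^{2n+2}$. Now I would invoke the Eliashberg--Floer--McDuff theorem (a theorem resting on Gromov's pseudoholomorphic curve techniques; cf. \cite{Eli91,CiEl12,KwvKo13}): for $n\geq 2$ every symplectically aspherical filling of $(S^{2n+1},\cald_o)$ is diffeomorphic to $B^{2n+2}$. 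If $\cald$ were contactomorphic to $\cald_o$, then $W=F_f$ would be such a filling, hence diffeomorphic to $B^{2n+2}$, contradicting $H_{n+1}(W;\bbz)\neq 0$. Therefore $\cald$ is not contactomorphic to $\cald_o$, i.e.\ it is exotic.

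The step I expect to be the main obstacle is the careful application of the filling-uniqueness theorem. One must verify that $W=F_f$ meets its hypotheses --- exactness, hence asphericity, is automatic, and the required ambient dimension $2n+2\geq 6$ is precisely the standing assumption $n\geq 2$ --- and, more delicately, one must be sure that the contact structure the link $L(\bfa)$ inherits from the CR sphere really is the one bounding the Stein domain $F_f$ and not some a priori different contact structure; this rests on the compatibility of the link's CR contact structure with the Milnor open book and is where the singularity theory genuinely enters. A slightly cheaper variant avoids the full diffeomorphism statement and uses only that any symplectically aspherical filling of $(S^{2n+1},\cald_o)$ has the integral homology of a point, which already contradicts $H_{n+1}(W;\bbz)\cong\bbz^{\mu}\neq 0$.
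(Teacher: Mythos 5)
Your argument is correct and is essentially the paper's own: the paper offers no independent proof but simply cites Kwon--van Koert \cite{KwvKo13} and the Eliashberg--Gromov--McDuff filling results \cite{Eli91}, and the reasoning behind that citation is exactly what you spell out --- the Milnor fibre is a Stein (hence symplectically aspherical) filling with $H_{n+1}\cong\bbz^{\mu}$, $\mu=\prod_i(a_i-1)\geq 1$ since all $a_i\geq 2$, contradicting the ball (or homology-ball) conclusion of the Eliashberg--Floer--McDuff theorem if the structure were standard. Your identification of the link's CR-induced contact structure with the convex boundary of the smoothing is also the same mechanism the paper uses later when it cuts off the smoothed cone $W^\infty$ at $r=1$ to obtain its Stein filling.
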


\end{example}

%The premoduli space of Sasakian structures with underlying contact structure $\cald$ is described in terms of Sasaki bouquets in Section \ref{Sasconesect}. These involve deformations of the transverse complex structure.

\subsection{Deformations of the Transverse Complex Structure and the Sasaki Bouquet}\label{Sasconesect}
A deformation of the transverse complex structure induces a deformation of the CR structure which by Gray's Theorem preserves the contact structure $\cald$. 
For each strictly pseudoconvex CR structure $(\cald,J)$ there is a unique conjugacy class of maximal tori in $\gC\gr(\cald,J)$. This in turn defines a conjugacy class $\calc_T(\cald)$ of tori in the contactomorphism group $\gC\go\gn(\cald)$ which may or may not be maximal. We thus recall \cite{Boy10a} the map $\gQ$ that associates to any transverse almost complex structure $J$ that is compatible with the contact structure $\cald$, a conjugacy class of tori in $\gC\go\gn(\cald)$, namely the unique conjugacy class of maximal tori in $\gC\gR(\cald,J)\subset \gC\go\gn(\cald)$. Then two compatible transverse almost complex structures $J,J'$ are {\it T-equivalent} if $\gQ(J)=\gQ(J')$. A {\it Sasaki bouquet} is defined by
\begin{equation}\label{sasbouq}
\gB_{|\cala|}(\cald)=\bigcup_{\gra\in\cala}\gt^+(\cald,J_\gra)
\end{equation}
where the union is taken over one representative of each $T$-equivalence class in a preassigned subset $\cala$ of $T$-equivalence classes of transverse (almost) complex structures. Here $|\cala|$ denotes the cardinality of $\cala$. Since we are considering only deformations of the transverse complex structure, we restrict ourselves to the case that $J_\gra$ is integrable.

We can apply deformation theory to the two constructions of Section \ref{Consect}:
\begin{enumerate}
\item hypersurfaces of weighted homogeneous polynomials;
\item the $S^3_\bfw$ join construction.
\end{enumerate}

Deforming a weighted homogeneous polynomial $f$ of degree $d$ (only through weighted homogeneous polynomials) gives a smooth local moduli space $\gM^{whp}$ at $f$ satisfying
$$\dim_\bbc\gM^{whp}\geq h^0(\bbc\bbp(\bfw),d)-\sum_ih^0(\bbc\bbp(\bfw),w_i) +\dim\gA\gu\gt(L(\bfa))$$
where $h^0=\dim_\bbc H^0$ is the complex dimension of the corresponding space of sections. In many cases these give rise to a local moduli space $\gM^{SE}$ of Sasaki-Einstein metrics, cf. \cite{BGK05}. In this case $|\cala|=1$ so there is a $T$-equivalent family of Sasaki cones $\gt^+(\cald,J_t)$ in a small enough neighborhood of $f$ in $\gM^{whp}$ such that the Sasaki cones belong to inequivalent underlying CR structures. So in this case the bouquet consists of a single $T$-equivalence class of Sasaki cones, and we say that the bouquet is {\it trivial}. We emphasize that a trivial bouquet does not mean that the moduli space is trivial. On the other hand deforming the transverse complex structure of an $S^3_\bfw$ join manifold $M_{\bfl,\bfw}$ can give a non-Hausdorff local moduli space in the form of a non-trivial bouquet. We give examples of both of these deformations later in Section \ref{Exmodsect}. 

\subsection{Distinguishing Contact Structures of Sasaki Type}
A fundamental theorem due to John Gray, known as the Gray Stability Theorem, says that there are no local invariants, that is, all deformations of a contact structure $\cald$ are trivial. Thus, one looks for discrete invariants. Clearly, we have the classical invariant, namely, the first Chern class $c_1(\cald)$. Two methods for distinguishing contact structures with the same first Chern class is (1) the contact homology of Eliashberg, Giventhal, and Hofer when the transversality issues have been resolved; (2) the  $S^1$-equivariant symplectic homology  of an appropriate filling introduced by Viterbo \cite{Vit99} and developed further by Bourgeois and Oancea \cite{BoOa13a}. In this survey we concentrate mainly on (2). Here we give a brief review referring to \cite{BMvK15,BoOa13a} for details.

As mentioned previously a contact manifold of Sasaki type is holomorphically fillable; however, to compute our invariants we need a stronger condition on the filling. 
A {\it holomorphic filling} of a compact, coorientable contact manifold $(M,{\mathcal D})$ consists of a compact complex manifold with boundary $(W,J)$ such that
\begin{itemize}
\item the boundary of $W$ is diffeomorphic to $M$;
\item the boundary of $W$ is $J$-convex and $\mathcal D=TM \cap JTM$.
\end{itemize}
A {\it Stein filling} of a contact manifold $(M,{\mathcal D})$ is a holomorphic filling of $(M,{\mathcal D})$ that is biholomorphic to a Stein domain. As mentioned in the introduction

\begin{proposition}\label{Sasholfill}
A contact manifold of Sasaki type is holomorphically fillable.
\end{proposition}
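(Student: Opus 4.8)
The plan is to produce an explicit holomorphic filling directly from the affine cone of the Sasakian structure. Let $\cals=(R,\eta,\Phi,g)$ be a Sasakian structure on $M$ with $\cald=\ker\eta$. By Theorem \ref{saskahcone} the cone $(C(M),\gro,I,\bar g)$ with $\bar g=dr^2+r^2g$ is K\"ahler, and adjoining the apex gives an affine variety $Y=C(M)\cup\{0\}$ on which $I$ restricts to the complex-analytic structure away from $0$. First I would take $W$ to be the sublevel set $W=\{p\in C(M)\mid r(p)\leq 1\}\cup\{0\}$, a compact subset of $Y$ with smooth boundary $\partial W=M\times\{r=1\}$, which we have already agreed to identify with $M$; this disposes of the first bullet of the definition of holomorphic filling. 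The content is then to check that $W$ is a compact complex manifold with boundary (it is, since $Y$ is normal with at worst an isolated singularity at $0$, and in the smooth case $Y$ is a genuine affine variety so $W$ is a complex manifold; one restricts to the simply connected smooth setting as elsewhere in the survey, or passes to a resolution when singularities occur), and that its boundary is $J$-convex with the induced contact structure equal to $\cald$.

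The key step is $J$-convexity of $\partial W$. Here I would use the strictly plurisubharmonic exhaustion $\rho=r^2$ on $C(M)$: from the cone K\"ahler form one has $\gro=d(r^2\eta)=\tfrac12\, dd^c(r^2)$ up to the usual normalization, so $\rho=r^2$ is an $I$-plurisubharmonic function whose restriction to $W$ attains its maximum exactly along $\partial W=\{\rho=1\}$. This is precisely the statement that $\partial W$ is $J$-convex (pseudoconvex from inside). Moreover the maximal complex subbundle $TM\cap I\,TM$ of $T\partial W$ is, by the definition of $I$ on the cone, $IX=\Phi X+\eta(X)\Psi$, exactly the kernel of $\eta$, i.e. $\cald$; this matches the second bullet and also recovers the given CR structure $(\cald,J)$ as the induced one. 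So both defining conditions hold, and $W$ is a holomorphic filling of $(M,\cald)$.

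The main obstacle is regularity of $Y$ at the apex. If the Sasakian structure is not quasiregular, or if the Reeb foliation has orbifold singularities in its quotient, then $Y$ is only a normal affine variety and $W$ is a complex \emph{space} with an isolated singularity, not a manifold. The clean way around this is the theorem of Rossi (cf. Theorem 5.60 of \cite{CiEl12} in dimension $>3$, and Marinescu--Yeganefar \cite{MaYe07} in dimension $3$) already invoked in the introduction: a strictly pseudoconvex CR manifold of dimension $\geq 5$ bounds a Stein space, and after a small resolution / Stein domain replacement one obtains an honest compact complex manifold with boundary filling $(M,\cald)$; in fact this upgrades the conclusion to holomorphic (even Stein) fillability in the sense required for the symplectic-homology computations below. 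I would therefore present the proof in two registers: the direct cone construction giving the filling transparently in the smooth quasiregular case, and a remark citing Rossi's embedding theorem for the general strictly pseudoconvex CR structure underlying any contact structure of Sasaki type.
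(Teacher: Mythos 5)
Your route is the paper's own: form the K\"ahler cone, note that $r^2$ is strictly plurisubharmonic so the level $\{r=1\}$ is $J$-convex with complex tangencies $TM\cap I\,TM=\ker\eta=\cald$, add the apex to get the affine variety $Y$, resolve the singularity at the apex, and cut at $r=1$ to get the compact complex manifold $W$ with $\partial W=M$; the general (in particular irregular and three-dimensional) case rests on the embeddability theorems of Rossi, cf.\ Theorem 5.60 of \cite{CiEl12}, and of Marinescu--Yeganefar \cite{MaYe07}, exactly as in the paper and in \cite{BMvK15}.

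One correction is needed, however. Your dichotomy tying smoothness at the apex to quasiregularity (and its failure to irregularity or to orbifold singularities of the quotient) is not right. The apex of $Y$ is in general a genuine singular point even for regular or quasiregular Sasakian structures: the Brieskorn links $L(\bfa)$ are quasiregular and their cones $Y(\bfa)$ have an isolated hypersurface singularity at $0$, and a regular structure coming from an $S^1$-bundle over a smooth projective variety has cone equal to the blow-down of the zero section of a negative line bundle, which is singular except in very special cases (essentially the standard sphere). So the ``first register'' of your argument, in which $W=\{r\leq 1\}\cup\{0\}$ is claimed to be a complex manifold without resolving, is not available in any generality --- being an affine variety does not give smoothness at the apex, and simple connectivity does not help. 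The argument survives only because of your fallback: one resolves the singularity of $Y$ (this does not disturb a neighborhood of the boundary, so $J$-convexity and the identification of the complex tangencies with $\cald$ persist), and this is why the paper performs the resolution unconditionally. Note also that the existence of $Y$ as an affine variety at all --- that the apex can be adjoined as a point of a complex space --- is itself the content of the embeddability theorems you quote, so your two ``registers'' are not logically independent.
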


In this case we can resolve the singularity of the affine variety $Y$ and cut off the cone at $r=1$ so that the boundary of the filling is $M$. This gives a holomorphic filling $W$ of $M$ with $M=\partial W$. However, to effectively compute invariants we need a stronger condition on $W$, for example, when $W$ is a Stein manifold. This happens for hypersurfaces of weighted singularities, in particular for Brieskorn manifolds, since we can smooth the singularity at $0\in Y$ to give a Stein manifold $W^\infty$. In this case the Liouville vector field $\Psi$ is globally defined on $W^\infty$. We then obtain a Stein filling $W$ by cutting off $W^\infty$ at $r=1$ and identifying $M$ with $W^\infty\cap\{r=1\} =\partial W$. 

The invariants are constructed from Floer theory on the free loop space $\grL W^\infty$ parameterized by the sphere $S^{2N+1}$ by considering the action functional 
$$\cala_N:W^\infty\times S^{2N+1}\ra{1.6} \bbr$$ 
defined by
\begin{equation}\label{Floeract}
\cala_N(\grg,z)=-\int_{S^1}\grg^*\eta -\int_0^1H(t,\grg(t),z)dt
\end{equation}
where $\grg:S^1\ra{1.6} W^\infty$ is a loop and $H(t,\grg(t),z)$ is a time dependent $S^1$ invariant Hamiltonian on $W^\infty\times S^{2N+1}$. The critical points of $\cala_N$ are periodic solutions to the parameterized Hamilton's equations 
\begin{equation}\label{parHam}
\dot{x}=X_H(x),\qquad \int_0^1 \vec \nabla_z H(t,x(t),z)dt=0
\end{equation}
where  $X_H$ is the Hamiltonian vector field associated to the Hamilton $H$. From the negative gradient flow of $\cala_N$ one obtains the parameterized Floer equations
for $\bar{u}=(u,z):\bbr\times S^1\ra{1.8} W^\infty\times S^{2N+1}$ given by
\begin{equation}
\label{eq:param_floer}
\begin{split}
\frac{\partial}{\partial s} u+I(t,u,z)(\frac{\partial u}{\partial t}-X_H) &=0\\
\frac{d}{ds}z-\int_{0}^1 \vec \nabla_z H(t,u(s,t),z(s))\,dt &=0\\
\lim_{s\to \mp \infty} \bar u(s,t)&\in S_{\pm}
\end{split}
\end{equation}
where $I(t,u,z)$ is a complex structure on the Stein manifold $W^\infty$, and $S_{\pm}$ are $S^1$-orbits of critical points of $\mathcal A_N$. Suffice it to say (see Section 4.2 of \cite{BMvK15} for details) that from this data one obtains an $S^1$ equivariant Floer chain complex $SC^{S^1,N}$ and from this its $S^1$ equivariant homology $SH^{S^1,N}(W)$. However, this homology depends on the choice of Hamiltonian as well as on $N$. The dependence on $N$ can be removed by taking the direct limit as $N\ra{1.6}\infty$. To remove the dependence on the choice of Hamiltonian we use Lemma 5.6 of \cite{BoOa13a} to give a smooth homotopy of Hamiltonians. This gives the $S^1$ equivariant symplectic homology $SH^{S^1}(W)$. Then, truncating the action $\cala$ to its positive part gives $SH^{S^1,+}(W)$. We have a theorem of Gutt \cite{Gutt14,Gutt17}

\begin{theorem}[Gutt]\label{KvKthem}
Let $M$ be a deformation of a Brieskorn manifold. Then $SH^{S^1,+}(W)$ is invariant under deformations of the symplectic structure.
\end{theorem}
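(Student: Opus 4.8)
The plan is to reduce the statement to an invariance property of $S^1$-equivariant symplectic homology under deformations of the ambient symplectic (Stein) structure, and then to exploit the special structure of links of weighted homogeneous polynomials. First I would recall the setup: a deformation $M$ of a Brieskorn manifold $L(\bfa)$ arises as the link $L(\bfa,p)$ of a perturbed Brieskorn–Pham polynomial $f+p$ satisfying condition GC, so that the affine cone $Y(\bfa,p)$ has an isolated singularity at the origin; smoothing this singularity produces a Stein manifold $W^\infty$ with a globally defined Liouville vector field $\Psi$, and cutting off at $r=1$ gives a Stein filling $W$ with $\partial W = M$, exactly as described just before the statement. The perturbation $p$ can be taken to vary in a one-parameter (or connected) family $p_\tau$ through weighted homogeneous polynomials satisfying GC, which produces a smooth family of Stein fillings $W_\tau$ and, on the boundary, a path of contact structures $\cald_\tau$ of Sasaki type on the fixed smooth manifold $M$. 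The assertion is that $SH^{S^1,+}(W_\tau)$ — built from the parameterized Floer complex $SC^{S^1,N}$ of \eqref{eq:param_floer}, passed to the direct limit $N\to\infty$ and made Hamiltonian-independent via Lemma 5.6 of \cite{BoOa13a} — does not change along such a path.

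The key steps, in order, would be: (i) Set up the family $W_\tau$ of Stein domains with $\partial W_\tau \cong M$ as above, noting that the underlying smooth manifold and the Liouville skeleton vary smoothly and that the almost complex structures $I(t,u,z)$ on $W^\infty_\tau$ can be chosen to depend smoothly on $\tau$. (ii) For fixed admissible Hamiltonian data and fixed $N$, run the standard continuation/cobordism argument: a generic smooth homotopy of the symplectic data along the path $\tau$ induces a chain map $SC^{S^1,N}(W_{\tau_0})\to SC^{S^1,N}(W_{\tau_1})$, with a chain homotopy inverse obtained by reversing the homotopy, so the $S^1$-equivariant Floer homologies at the two ends are isomorphic — provided the requisite compactness for parameterized Floer trajectories holds uniformly in $\tau$. (iii) Check that these continuation maps are compatible with the action filtration up to a controlled shift, so that after passing to the positive part $SH^{S^1,+}$ and to the limit $N\to\infty$ the isomorphism descends; here one uses that the Reeb dynamics on $(M,\cald_\tau)$ stays of the same qualitative type (the $-\int\grg^*\eta$ term in \eqref{Floeract} behaves uniformly) because the contact structures are all of Sasaki type with cone $Y(\bfa,p_\tau)$. (iv) Invoke the Hamiltonian-independence of $SH^{S^1}(W)$ and $SH^{S^1,+}(W)$ already established via \cite{BoOa13a} to conclude that the resulting isomorphism is canonical, hence $SH^{S^1,+}(W_\tau)$ is genuinely deformation-invariant.

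The main obstacle is step (ii)–(iii): establishing \emph{uniform} compactness and transversality for the parameterized Floer equation \eqref{eq:param_floer} along the entire family. One must rule out breaking of trajectories at infinity as $\tau$ varies, which requires uniform energy bounds and a maximum-principle / no-escape argument on the cylindrical ends of the $W^\infty_\tau$; this is where the Stein condition and the precise control on the Liouville vector field on links of weighted homogeneous singularities are essential, and it is the only place the hypothesis ``$M$ is a deformation of a Brieskorn manifold'' (as opposed to an arbitrary contact manifold of Sasaki type) is really used. Once uniform compactness is in hand, the rest is the standard Floer-theoretic continuation machinery together with the filtration bookkeeping needed to pass to $SH^{S^1,+}$; since this is a survey, I would cite \cite{Gutt14,Gutt17} for the technical details rather than reproduce them, and present the argument above as the structural skeleton of Gutt's proof.
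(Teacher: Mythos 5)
The first thing to note is that the paper contains no proof of Theorem \ref{KvKthem}: it is stated as a quoted result of Gutt, with the reader referred to \cite{Gutt14,Gutt17}, and the surrounding text only explains how the Stein filling $W$ is produced from the smoothing of the singularity. So there is no in-text argument to compare yours against; your sketch has to be measured against the cited source, and in outline it does follow the standard continuation-map strategy: a family of Stein fillings $W_\tau$ with $\partial W_\tau\cong M$, continuation maps for the parameterized Floer data of \eqref{eq:param_floer}, a direct limit over $N$, and Hamiltonian independence via \cite{BoOa13a}.

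The genuine gap is your step (iii). For the full equivariant symplectic homology, continuation maps along a Liouville homotopy are standard; but $SH^{S^1,+}(W)$ is defined by truncating the action functional \eqref{Floeract}, and continuation maps attached to a deformation of the symplectic structure do not respect this truncation in any a priori sense: the constant orbits sit at the truncation level and the Reeb action spectrum moves with $\tau$, so ``compatible with the action filtration up to a controlled shift'' is precisely the assertion that needs proof, not an ingredient you may assume. The two standard ways to close this are (a) to cut the deformation into small steps on which the successive contact forms are uniformly comparable, so that finite action windows can be transported and one passes to a limit over windows, or (b) to deduce invariance of the positive part from invariance of the total $S^1$-equivariant symplectic homology together with the purely topological nature of the constant-orbit part, via the tautological long exact sequence relating the two. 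Your proposal contains neither mechanism. Conversely, the issue you single out as the main obstacle --- uniform maximum-principle/compactness along the family --- is comparatively routine for Stein (Liouville) fillings; the Brieskorn hypothesis mainly guarantees that the smoothings of the perturbed weighted homogeneous polynomials form a smooth family of Stein domains with fixed boundary manifold, and in Gutt's more general statements additional index and Chern-class hypotheses enter exactly at the filtration step, not at compactness.
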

There is a more general result in \cite{Gutt17}. By a deformation of a Brieskorn manifold we mean by adding monomials of $\deg f$ to the Brieskorn-Pham polynomial representing $M$. The Betti numbers of the positive part of $S^1$ equivariant symplectic homology $SH^{S^1,+}(W)$ are $sb_i={\rm rank}~SH^{S^1,+}_i(W)$. We define the {\it mean Euler characteristic} of $W$ as
\begin{equation}
\label{eq:def_mec}
\chi_m(W) = 
\frac{1}{2} 
\left( 
\liminf_{N \to \infty}  \frac{1}{N} \sum_{i=-N}^N (-1)^i sb_i(W) \,+\,
\limsup_{N \to \infty}  \frac{1}{N} \sum_{i=-N}^N (-1)^i sb_i(W)
\right) 
\end{equation}
if it exists. The following corollary, which follows from Proposition 4.21 of \cite{BMvK15}, was suggested by Otto van Koert 

\begin{corollary}
Suppose that $M$ is the link of a weighted homogeneous polynomial $f$ with an isolated singularity at $0$, and denote the smoothing $f^{-1}(1)$ by $W$. Assume that $\dim(M)>1$.
If the quotient orbifold $Q=M/S^1$ is Fano or of general type, then $\chi_m(W)$ is a contact invariant.
\end{corollary}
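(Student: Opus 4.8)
The plan is to reduce the statement to Proposition~4.21 of \cite{BMvK15}, which asserts that the positive $S^1$-equivariant symplectic homology $SH^{S^1,+}(W)$ of the filling depends only on the contact manifold $(M,\cald)$ --- not on the particular weighted homogeneous polynomial $f$ realizing $M$, nor on the choice of filling --- as soon as the contact form carrying the Reeb dynamics is \emph{index-definite}, i.e. index-positive or index-negative in the sense of \cite{BoOa13a}. Granting this, the mean Euler characteristic \eqref{eq:def_mec} is assembled only from the ranks $sb_i(W)=\mathrm{rank}\,SH^{S^1,+}_i(W)$, so it inherits contact invariance; this upgrades Gutt's deformation invariance (Theorem~\ref{KvKthem}) to genuine contact invariance. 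The real work is thus to show that ``$Q=M/S^1$ is Fano or of general type'' forces index-definiteness of the Sasakian contact form.

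First I would fix the Sasaki-geometric dictionary. Since $M$ is the link of a weighted homogeneous polynomial with an isolated singularity at $0$, by Theorem~\ref{Briessasthm} it carries a quasiregular Sasakian structure $\cals=(R,\eta,\Phi,g)$ whose quotient orbifold is precisely $Q=M/S^1$ (Theorem~\ref{fundthm2}), its affine cone $Y$ is $\bbq$-Gorenstein so $c_1(\cald)$ is a torsion class, and the Milnor fiber $W=f^{-1}(1)$ is parallelizable; hence $SH^{S^1,+}(W)$ is $\bbz$-graded and $\chi_m(W)$ is a well-defined number. Feeding $c_1(\cald)=0$ (in de Rham cohomology) into the exact sequence \eqref{Sasexactseq} gives $c_1(\calf_R)\in\ker\gri_*=\bbr[d\eta]_B$, so $c_1(\calf_R)=a[d\eta]_B$ for some $a\in\bbr$; since $c_1(\calf_R)$ is the pullback of $c_1^{orb}(Q)$ and $[d\eta]_B$ the pullback of the orbifold K\"ahler class $[\gro]$, this descends to $c_1^{orb}(Q)=a[\gro]$. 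Thus $Q$ is automatically Fano, Calabi--Yau, or of general type, with ``$Q$ Fano'' $\Leftrightarrow a>0$ ($\cals$ positive, Definition~\ref{Sastype}) and ``$Q$ of general type'' $\Leftrightarrow a<0$ ($\cals$ negative); the hypothesis excludes exactly the borderline $a=0$.

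Second I would convert the sign of $a$ into index-definiteness of $\eta$. As $\cals$ is quasiregular, the Reeb flow is the fiberwise $S^1$-action of the orbibundle $M\to(Q,\gro)$, so the closed Reeb orbits are the iterates of the orbit fibers, graded by multiplicity, and the Robbin--Salamon (Conley--Zehnder) index of the $k$-fold iterate of any such orbit grows (resp. decays) linearly in $k$ --- equivalently in the period --- with leading coefficient a positive multiple of $a$ and a uniformly bounded error. This makes $\eta$ index-positive when $a>0$ and index-negative when $a<0$, whereas $a=0$ is precisely the case of vanishing mean index where the argument (and, in general, the conclusion) breaks down. With index-definiteness in hand, Proposition~4.21 of \cite{BMvK15} applies and yields that $SH^{S^1,+}(W)$, hence $\chi_m(W)$, is a contact invariant. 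The standing assumption $\dim M>1$ places us in the range $\dim M\geq 3$ where these Floer-theoretic constructions and the index computations of \cite{BMvK15,BoOa13a} are valid.

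The main obstacle is the index bookkeeping in the second step: one must bound the Robbin--Salamon indices of \emph{all} closed Reeb orbits by a uniform affine function of the period, including the short exceptional orbits lying over the orbifold locus of $Q$, for which the naive index formula picks up fractional Maslov-type corrections; verifying that these corrections remain bounded --- so that index-positivity (or negativity) holds globally and not merely asymptotically --- is the delicate point. Everything else is assembling the Sasaki-geometric dictionary above with Proposition~4.21 of \cite{BMvK15}, which itself carries the genuine analytic content, namely the filling-independence of $SH^{S^1,+}$ via the Bourgeois--Oancea exact sequence and an asymptotic-dynamical-convexity argument.
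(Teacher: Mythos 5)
Your proposal is correct and follows essentially the same route as the paper, which simply cites Proposition~4.21 of \cite{BMvK15} (with details in Lemma~5.15 and Appendix~C of \cite{KwvKo13}): reduce to the filling-independence of $SH^{S^1,+}(W)$, and note that the Fano/general type hypothesis on $Q=M/S^1$ supplies the index-positivity/negativity of the quasiregular Sasakian contact form that this invariance statement requires. The uniform index bounds for exceptional orbits that you flag as the delicate point are exactly the content delegated to those references, so your write-up is a faithful expansion of the paper's argument rather than a different proof.
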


For details regarding the invariance and computation of the mean Euler characteristic we refer to Lemma 5.15 and Appendix C in \cite{KwvKo13}.

%\begin{corollary}\label{meanEulercor}
%Under the hypothesis of Theorem \ref{KvKthem} $\chi_m(W)$ is a contact invariant.
%\end{corollary}

In Example 5.1 and Lemma 5.2 of \cite{BMvK15} an example of contact manifolds that cannot be distinguished by $\chi_m(W)$ but can by their homology groups $SH^{S^1,+}$ is given.

A special case of much interest is when $c_1(\cald)=0$. We note that any smooth link of a complete intersection by weighted homogeneous polynomials of dimension $2n+1$ is $n-1$-connected and has $c_1(\cald)=0$.

\section{Extremal Sasaki Geometry}\label{extsassec}
The notion of extremal K\"ahler metrics was introduced as a variational problem by Calabi in \cite{Cal56} and studied in greater depth in \cite{Cal82}. The most effective functional is probably the $L^2$-norm of scalar curvature, viz.
\begin{equation}\label{Calfun}
E(\omega) = \int_M s^2 d\mu,
\end{equation}
where $s$ is the scalar curvature and $d\mu$ is the volume form of the
K\"ahler metric corresponding to the K\"ahler form $\omega$. The variation is taken over the set of all K\"ahler metrics within a fixed K\"ahler class $[\gro]$. Calabi showed that the critical points of the functional $E$ are precisely the K\"ahler metrics such that the gradient vector field $J{\rm grad}~s$ is holomorphic. Recent detailed accounts of K\"ahler geometry are given in \cite{Gau09b,Sze14}.

On the Sasaki level extremal metrics were developed in \cite{BGS06}. The procedure is quite analogous again using the $L^2$-norm of scalar curvature $s_g$ of the Sasaki metric $g$, viz.
\begin{equation}\label{sasfun}
E(g) = \int_M s_g^2 dv_g,
\end{equation}
where now the variation is taken over the space ${\mathcal S}(M,R,\bar{J})$. Actually by the Sasaki version of a theorem of Calabi \cite{Cal85} extremal Sasaki metrics have maximal symmetry, so we can take the variation over the subspace of $\bbt$-invariant functions ${\mathcal S}(M,R,\bar{J})^\bbt$.

%We can take $\phi$ to be invariant under the action of a maximal torus $\bbt$ in $\gA\gu\gt(\cals)$ by integration over $\bbt$.

Again the critical points are precisely those Sasaki metrics such that the $(1,0)$ component of the gradient vector field $\partial^{\#}s_g$ is transversely holomorphic. Since the scalar curvature $s_g$ is related to the transverse scalar curvature $s^T_g$ of the transverse K\"ahler metric by $s_g=s_g^T-2n$, a Sasaki metric is extremal (CSC) if and only if its transverse K\"ahler metric is extremal (CSC). It is often more convenient to deal with the transverse scalar curvature $s^T$. 

\begin{remark}\label{negnul}
In the case of negative or null type the connected component of the automorphism group $\gA\gu\gt(\cals)$ is the circle group $S^1$ generated by the Reeb vector field. So any extremal Sasaki metric of negative or null type has constant scalar curvature. The existence of constant scalar curvature metrics on such negative Sasaki manifolds is generally still an open question. However, it is known when $c_1(\cald)=0$ by the transverse Aubin-Yau theorem \cite{ElK} that constant scalar curvature Sasaki metrics do exist. They are called Sasaki-eta-Einstein. Moreover, for null Sasakian structures we have

\begin{proposition}\label{nulcsc}
Every class in $\gM^{cl}_{N}$ has a constant scalar curvature representative $g$ which is eta-Einstein satisfying
$$\Ric_g=-2g+2(n+1)\eta\otimes \eta.$$
\end{proposition}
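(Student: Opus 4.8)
The plan is to reduce the statement to the transverse Aubin–Yau theorem of El Kacimi-Alaoui \cite{ElK}, exactly as in the negative case, and then to identify the resulting CSC Sasaki metric with an $\eta$-Einstein metric with the stated Einstein constants. First I would recall that a null Sasakian structure has $c_1(\calf_R)=0$, so by the exact sequence \eqref{Sasexactseq} we have $c_1(\cald)=0$ in $H^2(M,\bbr)$; thus the class $\gM^{cl}_{N}$ is well-defined and every structure in it is genuinely null, with $\dim\gt^+=1$ by Proposition \ref{typegamma}(3) (so the Reeb foliation is quasiregular and we may work on the quotient). Fix a representative Sasakian structure $\cals=(R,\eta,\Phi,g)$ of a given class in $\gM^{cl}_N$. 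The transverse K\"ahler class is $[d\eta]_B\in H^{1,1}_B(\calf_R)$, and since $c_1(\calf_R)=0$ while $[d\eta]_B\neq 0$ the cohomological obstruction \eqref{cohSE} is satisfied with $a=0$: there is no proportionality constraint in the way.

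Next I would invoke El Kacimi-Alaoui's transverse Calabi–Yau theorem: within the transverse K\"ahler class $[d\eta]_B$ there exists a transverse K\"ahler metric with vanishing transverse Ricci form, obtained by solving the transverse Monge–Amp\`ere equation. Passing back up to $M$ via the deformation \eqref{phidef}, this yields a Sasakian structure $\cals'=(R,\eta',\Phi',g')$ in the same Sasaki class with transverse Ricci form $\rho^T\equiv 0$, hence transverse scalar curvature $s^T\equiv 0$, hence (by $s_{g'}=s^T_{g'}-2n$, as recalled just above the statement) constant scalar curvature $s_{g'}=-2n$. So the CSC representative $g$ exists. The remaining task is purely algebraic: compute $\Ric_g$ for a Sasaki metric with $\rho^T=0$.

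For that final identification I would use the standard relation between the Ricci curvature of a Sasaki metric and its transverse Ricci curvature. If $\pi\colon TM\to\cald$ denotes the projection along $R$, one has, for $X,Y$ sections of $\cald$, the formula $\Ric_g(X,Y)=\Ric^T_g(X,Y)-2g(X,Y)$, together with $\Ric_g(X,R)=2n\,\eta(X)$ and $\Ric_g(R,R)=2n$ (the latter two hold for every Sasaki metric, as already noted via $\Ric_g(X,R)=2n\eta(X)$ in the discussion of \eqref{SEeqn}). With $\Ric^T_g=0$ these combine to $\Ric_g(X,Y)=-2g(X,Y)$ on $\cald\times\cald$, $\Ric_g(X,R)=2n\eta(X)$, and $\Ric_g(R,R)=2n$; writing this in the form $\Ric_g=\lambda g+\nu\,\eta\otimes\eta$ forces $\lambda=-2$ (from the $\cald\times\cald$ part), $\lambda+\nu=2n$ (from $\Ric_g(R,R)=2n$ and $g(R,R)=1$), hence $\nu=2n+2=2(n+1)$, and the mixed part is then automatically consistent since $g(X,R)=\eta(X)$. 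This is exactly $\Ric_g=-2g+2(n+1)\,\eta\otimes\eta$, so $g$ is $\eta$-Einstein of the stated type (it is the $a=-2$ case of \eqref{etaEin}, consistent with the sentence preceding the proposition that $c_1(\cald)=0$ CSC metrics are $\eta$-Einstein).

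The only genuine input is El Kacimi-Alaoui's transverse Calabi–Yau theorem, and I expect the main (though standard) obstacle to be making sure its hypotheses are met: one needs the transverse K\"ahler structure together with the basic first Chern class hypothesis, which is precisely $c_1(\calf_R)=0$ in the null case, and one needs to verify that the solution of the basic Monge–Amp\`ere equation indeed produces a genuine Sasakian structure in the same isotopy class (not merely a transverse K\"ahler metric on the leaf space). Both points are handled by the deformation description \eqref{phidef} and the transverse $\partial\bar\partial$-lemma, so no new difficulty arises; the Ricci computation in the last step is routine. I would remark in passing that the transverse homothety $\cals\mapsto\cals_a$ rescales these Einstein constants in the expected way, which is why the constant $-2$ (rather than an arbitrary negative number) appears: it is pinned down by the normalization $\eta(R)=1$.
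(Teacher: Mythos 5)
Your proposal is correct and takes essentially the same route as the paper: the paper's justification of Proposition \ref{nulcsc} is exactly the transverse Aubin--Yau theorem of El Kacimi-Alaoui applied in the null case $a=-2$ of \eqref{etaEin}, producing a transversely Ricci-flat (hence CSC, eta-Einstein) representative in the given transverse K\"ahler class via \eqref{phidef}, and your final Ricci computation merely makes explicit why the constants are $-2$ and $2(n+1)$. One minor slip, harmless because quasiregularity is never actually needed in the argument: the fact that null type forces $\dim\gt^+=1$ follows from the paper's remark that $\dim\gt^+>1$ implies positive or indefinite type, not from Proposition \ref{typegamma}(3), which concerns $\grg$ represented by a negative definite $(1,1)$-form.
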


\end{remark}

Thus, we shall focus on Sasakian structures of positive or indefinite type.
We recall that $\mathfrak{h}^T(R,\bar{J})$ denotes  the Lie algebra of transversely holomorphic vector fields which is infinite dimensional owing to arbitrary sections of the line bundle $L_R$ generated by $R$. We have the following equivalent characterizations

\begin{theorem}\label{extremthm}
Let $(M,\cals)$ be a Sasaki manifold. Then the following are equivalent
\begin{enumerate}
\item $\cals$ is extremal;
\item The gradient vector field $\partial^{\#}s_g$ of the scalar curvature $s_g$ is transversely holomorphic;
\item $s_g$ is a solution to Equation \eqref{ord4};
\item $s_g\in \calh^\cals_B$;
\item there is a maximal $k$-dimensional torus $\bbt$ in $\gC\go\gn(M,\eta)$ such that $s_g\in \calh^\bbt_B$.
\end{enumerate}
\end{theorem}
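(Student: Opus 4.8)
The plan is to establish the cycle of equivalences $(1)\Leftrightarrow(2)\Leftrightarrow(3)\Leftrightarrow(4)\Leftrightarrow(5)$ by a short loop, relying on the machinery already set up in the excerpt. First I would prove $(1)\Leftrightarrow(2)$: this is the Sasaki analogue of Calabi's variational characterization from \cite{BGS06}. One computes the first variation of the functional $E(g)=\int_M s_g^2\,dv_g$ over the space ${\mathcal S}(M,R,\bar{J})$, parametrized by the basic potential $\phi$ as in \eqref{phidef}. Using that the transverse scalar curvature $s^T_g$ varies (to first order) by a fourth-order linear operator in $\phi$ built from the Lichnerowicz-type operator, integration by parts identifies the Euler--Lagrange equation as the condition that the $(1,0)$-gradient $\partial^{\#}s_g$ be transversely holomorphic; the key input is that the linearization of $s^T$ along $\partial_B\bar\partial_B\phi$ is controlled by $(\bar\partial\partial^{\#})^*\bar\partial\partial^{\#}$ acting on $\phi$. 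Since $s_g=s^T_g-2n$ differs from $s^T_g$ by a constant, the transverse and Sasakian extremality conditions coincide.

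Next, $(2)\Leftrightarrow(3)\Leftrightarrow(4)$ is essentially a restatement. A basic function $\psi$ has $\partial^{\#}\psi$ transversely holomorphic if and only if $\bar\partial\partial^{\#}\psi=0$, and since $(\bar\partial\partial^{\#})^*\bar\partial\partial^{\#}$ is a non-negative self-adjoint operator on basic functions (with respect to the transverse metric and the volume $\eta\wedge(d\eta)^n$), we have $\bar\partial\partial^{\#}\psi=0$ if and only if $(\bar\partial\partial^{\#})^*\bar\partial\partial^{\#}\psi=0$, i.e. $\psi$ solves \eqref{ord4}. Applying this with $\psi=s_g$ gives $(2)\Leftrightarrow(3)$, and $(3)\Leftrightarrow(4)$ is just the definition of $\calh^\cals_B$ as the solution space of \eqref{ord4}. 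The only point needing care is that $s_g$ is genuinely basic, which follows because $s_g=s^T_g-2n$ and $s^T_g$ is a function of the transverse K\"ahler metric alone, hence basic.

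Finally, $(4)\Leftrightarrow(5)$: the implication $(5)\Rightarrow(4)$ is immediate since $\calh^\bbt_B\subset\calh^\cals_B$ for any torus $\bbt$ in $\gC\go\gn(M,\eta)$ whose generators give transversely holomorphic Killing potentials. For $(4)\Rightarrow(5)$ I would invoke the Sasaki version of Calabi's theorem \cite{Cal85} already quoted in the text: an extremal Sasaki metric has maximal symmetry, meaning its automorphism group $\gA\gu\gt(\cals)$ contains a maximal torus $\bbt$ of $\gC\go\gn(M,\eta)$, and the isometry $J\,\mathrm{grad}\,s_g$ generates a one-parameter subgroup of this torus, so $s_g$ lies in the corresponding space $\calh^\bbt_B$ of Killing potentials; by conjugating one may take $\bbt$ to be a fixed maximal torus. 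The main obstacle is the first step: correctly carrying out the transverse first-variation computation and identifying the fourth-order operator, since this requires the transverse analogues of the Bochner--Kodaira identities on the Reeb foliation and careful bookkeeping of the basic cohomology constraint $\int_M\phi\,\eta\wedge(d\eta)^n=0$; everything after that is formal operator theory and an appeal to the already-cited maximality theorem.
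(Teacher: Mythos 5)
Your outline is correct and is essentially the argument the paper has in mind: Theorem \ref{extremthm} is stated there without proof, as a summary of the variational development in \cite{BGS06} (the critical points of \eqref{sasfun} over ${\mathcal S}(M,R,\bar{J})$ are exactly the structures with $\partial^{\#}s_g$ transversely holomorphic), the definition of $\calh^\cals_B$ as the kernel of the operator in \eqref{ord4} together with the standard identification $\ker\bigl((\bar{\partial}\partial^{\#})^{*}\bar{\partial}\partial^{\#}\bigr)=\ker\bigl(\bar{\partial}\partial^{\#}\bigr)$ on a compact manifold, and the Sasaki version of Calabi's maximal-symmetry theorem \cite{Cal85} for the equivalence with the torus statement. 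So your proposal reconstructs the intended proof along the same lines, including the correct reading of item (5) as referring to a torus acting by Sasaki (hence CR) automorphisms so that $\calh^\bbt_B\subset\calh^\cals_B$; I see no gaps.
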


In K\"ahler geometry there is an important invariant that obstructs CSC K\"ahler metrics, the Futaki invariant which is a character on the Lie algebra of holomorphic vector fields. The search for canonical K\"ahler metrics either extremal or when possible CSC K\"ahler metrics has led to the geometric invariant theory notion of K-stability introduced by Tian \cite{Tia97} and developed further by Donaldson \cite{Don02} and his school, cf. \cite{Sze14} and references therein. Further, a functional whose critical points are CSC K\"ahler metrics was introduced and studied by Mabuchi \cite{Mab86}.
Extremal K\"ahler metrics are related to the notion of relative stability described by Sz\'ekelyhidi \cite{Sze06,Sze07}.
There are analogous notions in Sasaki geometry which we now explore.

\subsection{The Sasaki-Futaki Invariant and K-Stability}
Recall from \cite{BGS06} that the Sasaki--Futaki invariant of the Reeb vector field $R$ on $M$, is the map 
$$ {\bf F}_R : \mathfrak{h}(R,\bar{J}) \longrightarrow \bbc$$ 
defined by
\begin{equation}\label{SFinv}
{\bf F}_R(X)= \int_MX\psi_g dv_g,
\end{equation} 
where $\psi_g$ is the unique basic function of average value $0$ that satisfies 
$$\rho^T = \rho^T_H + i\partial\overline{\partial} \psi_g$$ 
where $\rho^T_H$ is $\Delta_B$--harmonic. However, since ${\bf F}_R(R)=0$ we can consider ${\bf F}_R$ as a character on the finite dimensional Lie algebra $\bar{\mathfrak{h}}(R,\bar{J}) =\mathfrak{h}(R,\bar{J})/\grG(L_R)$.
We also mention that ${\bf F}_R$ is independent of the representative in $\Sas(R,\bar{J})$ and that ${\bf F}_R([X,Y])=0$, see~\cite{BGS06,FOW06}. As with the Futaki invariant in K\"ahler geometry \cite{Fut83}, ${\bf F}_R$ is an obstruction to the existence of constant scalar curvature Sasaki metrics (cscS). 

The stability properties for quasi-regular Sasaki metrics are equivalent to the stability properties of certain K\"ahler orbifolds which was investigated in detail in \cite{RoTh11}. However, to treat the general Sasaki metrics one needs to work on the affine cone as described in Section \ref{affcone}. This was done by Collins and Sz\'ekelyhidi \cite{CoSz12} and it is this approach that we follow here. We begin by defining two important functionals, the {\it total volume} and the {\it total transverse scalar curvature}, viz. 
\begin{equation}\label{VS}
\bfV_R=\int_Mdv_g,\qquad \bfS_R=\int_Ms^Tdv_g
\end{equation}
where $s^T$ denotes the transverse scalar curvature of the Sasakian structure $\cals=(R,\eta,\Phi,g)$. We define the {\it Donaldson-Futaki invariant} on the polarized affine cone $(Y,R)$ by
\begin{equation}\label{DonFut}
{\rm Fut}(Y,R,a) := \frac{\bfV_R}{n}  D_{a}\left( \frac{\bfS_R}{\bfV_R} \right)+ \frac{\bfS_R D_{a}\bfV_R}{n(n+1)\bfV_R}.
\end{equation}
A straigthforward computation \cite[Lemma 2.15]{TivC15} shows that there exists a constant $c_n>0$ depending only on the dimension $n$ such that 
\begin{equation}\label{Futeqn}
Fut(Y,R,a)=c_n \bfF_{R}(\Phi(a)).
\end{equation} 
To proceed further we need the definition of a special type of degeneration known as a {\it test configuration} due to Donaldson in the K\"ahler case and Collins-Sz\'ekelyhidi in the Sasaki case:

\begin{definition}\label{testconf}
Let $(Y,R)$ be a polarized affine variety with an action of a torus $T^{\bbc^*}$ for which the Lie algebra $\gt$ of a maximal compact subtorus $T^\bbr$ contains the Reeb vector field $R$. A $T^{\bbc}$-equivariant test configuration for $(Y,R)$ is given by a set of $k$ $T^{\bbc}$-homogeneous generators $f_1,\dots, f_k$ of the coordinate ring $\calh$ of $Y$ and $k$ integers $w_1, \ldots, w_k$ (weights). The functions $f_1,\dots, f_k$ are used to embed $Y$ in $\bbc^k$ on which the weights $w_1, \ldots, w_k$ determine a $\bbc^*$ action. By taking the flat limit of the orbits of $Y$ to $0\in \bbc$ we get a family of affine schemes $\mathcal{Y} \longrightarrow \bbc.$ There is then an action of $\bbc^*$ on the `central fiber' $Y_0$, generated by $a \in\gt'$, the Lie algebra of  some torus $T'^\bbc \subset {\rm GL}(k,\bbc)$ containing $T^\bbc$. 
\end{definition}

See \cite{Sze14} for more details including the precise definition of flat limit. Note that the Reeb  vector field $R$ for $Y$ is also a Reeb vector field for the central fiber $Y_0$. We now can obtain the correct notion of stability.

\begin{definition}\label{stabdef}
We say that the polarized affine variety $(Y,R)$ is $K$-semistable if for each $T^\bbc$ such that $R \in\gt$ the Lie algebra of $\bbt^\bbr$ and any $T^\bbc$-equivariant test configuration we have
$${\rm Fut}(Y,R,a)\geq 0$$
where $a\in \gt'$ is the infinitesimal generator of the induced $S^1$ action on the central fiber $Y_0$. The polarized variety $(Y,R)$ is said to be K-polystable if equality holds only for the product configuration $\mathcal{Y}=Y\times\bbc$.
\end{definition}

We now have a result of Collins and Sz\'ekelyhidi
\begin{theorem}[\cite{CoSz12}]\label{CoSzthem} 
Let $(M,\cals)$ be a Sasaki manifold of constant scalar curvature. Then its polarized affine cone $(Y,R)$ is K-semistable.
\end{theorem}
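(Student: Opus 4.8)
The plan is to follow the Collins--Sz\'ekelyhidi strategy, which mirrors the K\"ahler case: reduce K-semistability to the non-negativity of the asymptotic slope of the transverse Mabuchi K-energy $\calm$ (whose critical points are precisely the CSC transverse K\"ahler metrics) along a ray of Sasakian structures produced from the test configuration. Fix a $T^\bbc$-equivariant test configuration for $(Y,R)$, with homogeneous generators $f_1,\dots,f_k$ of $\calh$, weights $w_1,\dots,w_k$, central fibre $Y_0$, and induced generator $a\in\gt'$. Since $Y_0$ again carries $R$ as a Reeb field, by Definition \ref{stabdef} and \eqref{DonFut} it suffices to show that ${\rm Fut}(Y,R,a)\geq 0$, and by \eqref{Futeqn} this number is, up to the positive constant $c_n$, the Sasaki--Futaki invariant of the central fibre in the degeneration direction $a$.

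To control the slope side I would first build the ray. Using $(f_1,\dots,f_k)$ to embed the cone $Y$---hence $M=Y\cap\{r=1\}$---$T^\bbc$-equivariantly into $\bbc^k$, I pull back a fixed K\"ahler cone metric adapted to $R$ and act by the one-parameter subgroup $\rho_s=\exp\bigl(s\,{\rm diag}(w_1,\dots,w_k)\bigr)$ of ${\rm GL}(k,\bbc)$; since $\rho_s$ is a biholomorphism commuting with the $R$-action, restricting to $M$ and normalising produces a ray $s\mapsto\phi_s$ of basic transverse K\"ahler potentials, i.e.\ a ray $\cals_{\phi_s}\in\calS(M,R,\bar{J})$, whose behaviour as $s\to\infty$ is governed by the flat limit $Y_0$. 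I would arrange this ray to start at the given CSC Sasakian structure $\cals$ and, following Phong--Sturm, compare it with the weak geodesic ray in $\calS(M,R,\bar{J})$ determined by the test configuration, so that convexity of the K-energy applies.

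The argument then rests on two inputs. \emph{(i) A slope formula}: $\lim_{s\to\infty}\tfrac1s\,\calm(\phi_s)$ exists and equals $c\,{\rm Fut}(Y,R,a)$ for some $c>0$---the Sasaki/cone analogue of the Phong--Ross--Sturm and Paul--Tian identities---proved by expanding the $T$-equivariant Hilbert series of the central fibre $Y_0$ and matching its sub-leading coefficients, via \eqref{DonFut}, with $\bfV_R$, $\bfS_R$, and the derivatives $D_a$. \emph{(ii) Convexity}: the transverse analogue of the Berman--Berndtsson theorem gives that $\calm$ is convex along the weak geodesic ray, using subharmonicity of fibrewise energies (positivity of direct images) adapted to the Reeb foliation and to the non-compact cone. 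Since the ray starts at $\cals$, which is CSC, one has $\left.\tfrac{d}{ds}\right|_{s=0^+}\calm(\phi_s)=-\int_M\bigl(s^T_g-\overline{s^T_g}\bigr)\dot\phi_0\,dv_g=0$; convexity then forces $\tfrac{d}{ds}\calm(\phi_s)\geq 0$ for all $s$, so the asymptotic slope is $\geq 0$, hence ${\rm Fut}(Y,R,a)\geq 0$. As the test configuration was arbitrary, $(Y,R)$ is K-semistable.

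I expect the slope formula (i) to be the main obstacle. The central fibre $Y_0$ is only an affine scheme, in general non-reduced and singular away from the apex, so one cannot transplant smooth-model arguments directly; one must set up the $T$-equivariant Hilbert series of $Y_0$ with enough precision to isolate its sub-leading terms and then reconcile this algebraic ``Donaldson--Futaki'' number with the analytic asymptotics of $\calm$ along $\phi_s$. The transverse Berman--Berndtsson convexity in (ii) is also delicate, since the ambient space is the non-compact cone and the potentials are only basic: in the quasi-regular case one descends to psh metrics on the orbi-line bundle over the quotient orbifold $(\calz,\grD)$, and the irregular case is handled by approximation within the Sasaki cone $\gt^+$.
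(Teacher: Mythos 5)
Your plan is not the route the paper indicates. The survey follows Collins--Sz\'ekelyhidi \cite{CoSz12}, whose proof is an adaptation of Donaldson's lower bound for the Calabi functional: one uses the index character \eqref{indexchar} and its meromorphic expansion, whose sub-leading coefficients give $\bfV_R$ and $\bfS_R$, to run a finite-dimensional (equivariant Riemann--Roch/Hilbert series) argument that bounds $\|s_g-\bar{s}_g\|_{L^2}$ from below by the normalized Donaldson--Futaki invariant \eqref{DonFut} of any $T^\bbc$-equivariant test configuration; a CSC metric makes the left side zero and forces ${\rm Fut}(Y,R,a)\geq 0$. That argument is essentially algebraic, never leaves the affine cone, works uniformly for irregular Reeb fields, and handles non-reduced central fibers without any geodesic or pluripotential input. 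Your proposal instead transplants the Berman--Berndtsson/Phong--Ross--Sturm strategy (convexity of the transverse K-energy along weak geodesic rays plus a slope formula), which is a genuinely different mechanism.

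As a proof, the proposal has real gaps, and they sit exactly where you flag them. First, the asserted identity $\lim_{s\to\infty}\frac{1}{s}\calm(\phi_s)=c\,{\rm Fut}(Y,R,a)$ is false in general: even in the K\"ahler case the slope of the K-energy along the ray attached to a test configuration is the non-Archimedean Mabuchi functional, which is only $\leq$ the Donaldson--Futaki invariant, with a discrepancy supported on the non-reduced part of the central fiber; the inequality still yields semistability, but it is the inequality that must be formulated and proved, and in the Sasaki/affine-cone setting (flat limits $Y_0$ that are non-normal or non-reduced schemes, $R$ possibly irregular) no such slope statement was available -- establishing it is comparable in difficulty to the theorem itself. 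Second, the convexity input is not off the shelf either: you need existence and $C^{1,\bar 1}$-type regularity of weak transverse geodesic rays associated to test configurations, and convexity of $\calm$ along them, in the basic/foliated setting; your proposed treatment of the irregular case "by approximation within the Sasaki cone $\gt^+$" is unjustified, since deforming $R$ changes the space ${\mathcal S}(M,R,\bar{J})$, the transverse K-energy, the test configurations and ${\rm Fut}$ all at once, and semistability is not known to pass through such limits. Finally, the ray construction needs care: $\rho_s$ does not preserve $Y$, so one must restrict an ambient cone metric adapted simultaneously to $R$ and to the degenerating $\bbc^*$ to the moving images $\rho_s(Y)$ and pull back, and then compare this ray with the geodesic ray emanating from the CSC structure; these comparisons are part of the missing slope formula rather than consequences of it.
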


The proof of this theorem makes use of a certain Hilbert series, namely the {\it index character} 
\begin{equation}\label{indexchar}
F(R,t)=\sum_{\gra\in\gt^*}e^{-t\gra(R)}\dim\calh_\gra 
\end{equation}
introduced in \cite{MaSpYau06} and developed further in \cite{CoSz12}. In the latter it was shown that $F(R,t)$ has the meromorphic extension
$$F(R,t)=\frac{n!a_0(R)}{t^{n+1}}+\frac{(n-1)!a_1(R)}{t^n}+\cdots $$
where $a_0(R)=\bfV_R$ and $a_1(R)=\bfS_R$ up to constants which are irrelevant to the argument. The author and his colleagues \cite{BHLT15} studied the stability problem through the use of the Einstein-Hilbert functional which I discuss in the next section.

The Yau-Tian-Donaldson conjecture in the Sasaki case is:

\begin{conjecture}\label{YTDconj}
A Sasaki manifold $(M,\cals)$ has constant scalar curvature if and only if its polarized affine cone $(Y,R)$ is K-semistable.
\end{conjecture}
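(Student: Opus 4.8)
The conjecture is an equivalence whose two directions are of entirely different character, so I would treat them separately. The ``only if'' direction --- a constant scalar curvature Sasaki (cscS) metric forces $K$-semistability of $(Y,R)$ --- is exactly Theorem~\ref{CoSzthem} of Collins and Sz\'ekelyhidi, so there is nothing new to do: one encodes the geometry in the index character $F(R,t)$ of~\eqref{indexchar}, reads off $a_0(R)=\bfV_R$ and $a_1(R)=\bfS_R$ from its meromorphic expansion, and uses the identity~\eqref{Futeqn} to translate the Donaldson--Futaki invariant into the Sasaki--Futaki invariant $\bfF_R$; along any $T^\bbc$-equivariant test configuration in the sense of Definition~\ref{testconf} the slope at infinity of the transverse Mabuchi energy is a positive multiple of ${\rm Fut}(Y,R,a)$, and since a cscS metric is a minimizer of that energy the slope is $\ge 0$, i.e.\ ${\rm Fut}(Y,R,a)\ge 0$.

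The substance of the conjecture is the converse, ``$K$-stability $\Rightarrow$ existence of a cscS metric,'' and here I would begin by \textbf{correcting the hypothesis}: as in the K\"ahler case there are $K$-semistable polarized cones that are not $K$-polystable and carry no cscS metric, so the statement one should actually try to prove is that $\cals$ admits a cscS metric in its Sasaki class \emph{if and only if} $(Y,R)$ is uniformly $K$-stable relative to its maximal torus $\bbt^k$ (equivalently, $K$-polystable modulo $\gA\gu\gt(\bar J)_0$). With that amendment the plan is variational: introduce the transverse Mabuchi (K-energy) functional $\mathcal{M}$ on the space ${\mathcal S}(M,R,\bar J)$ of~\eqref{phidef}; by Theorem~\ref{extremthm} its critical points are precisely the cscS structures in the class, so it suffices to prove (i) uniform $K$-stability $\Rightarrow$ coercivity (properness) of $\mathcal{M}$ modulo $\gA\gu\gt(\bar J)_0$, and (ii) coercivity $\Rightarrow$ existence of a smooth minimizer.

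For step (i) I would transplant the non-Archimedean pluripotential package of Berman--Boucksom--Jonsson, together with its relative/extremal refinements, to the affine cone $(Y,R)$: a test configuration determines a geodesic ray in ${\mathcal S}(M,R,\bar J)$ along which the asymptotic slope of $\mathcal{M}$ is a positive multiple of ${\rm Fut}(Y,R,a)$, a uniform stability bound gives a linear lower bound on this slope in terms of a non-Archimedean energy norm, and a regularization argument promotes this to genuine coercivity on smooth basic potentials. Two features specific to Sasaki geometry need care: the geometry is purely \emph{transverse}, so El Kacimi-Alaoui's transverse Hodge and elliptic theory on $\calf_R$~\cite{ElK} must replace ordinary Hodge theory throughout, and when $R$ is irregular there is no projective quotient, so one must work directly on $Y$ with the weighted volume $\bfV_R$ as normalization; in the quasiregular case one may instead descend to the orbifold $(\calz,\grD)$ and invoke Ross--Thomas~\cite{RoTh11}, after which an approximation of an irregular $R$ by quasiregular Reeb fields inside the Sasaki cone $\gt^+_k(\cald,J)$, carrying uniform estimates, closes the remaining case. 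For step (ii) I would run a continuity method (Chen's continuity path, or an Aubin--Yau path on the cone) for the fourth-order cscS equation $s^T_g={\rm const}$, which decouples into a transverse complex Monge--Amp\`ere equation for the K\"ahler potential coupled to a linear fourth-order Lichnerowicz-type equation for the conformal factor; the required a priori estimates are the transverse analogue of the Chen--Cheng estimates, where coercivity supplies an entropy/energy bound, this yields a $C^0$ bound on the potential, then a transverse Laplacian estimate, then higher-order bootstrapping by basic elliptic regularity, while openness follows from invertibility of the transverse Lichnerowicz operator off the torus directions.

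The main obstacle is precisely steps (i) and (ii) taken together: establishing the coercivity $\Leftrightarrow$ stability dictionary and the fourth-order a priori estimates in the \emph{transverse, possibly irregular} cone setting --- in effect porting the full Chen--Cheng and Berman--Boucksom--Jonsson machinery to Sasaki geometry --- while at the same time pinning down the correct stability notion so that the equivalence can hold at all. Evidence that this is a viable program rather than a hopeless one is that the Sasaki--Einstein subcase is already a theorem (Collins--Sz\'ekelyhidi, building on Chen--Donaldson--Sun), and the toric, i.e.\ complexity-zero, subcase should be within reach by adapting Donaldson's toric cscK analysis to the moment polytope of $\gt^+_{n+1}(\cald,J)$; but the general cscS statement remains open, for much the same analytic reasons it was long open in the K\"ahler case, now compounded by the foliated transverse geometry and the absence of a projective model whenever $R$ fails to be a rational multiple of an integral Reeb field.
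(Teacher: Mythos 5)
You have correctly diagnosed the situation: the statement you were given is labelled a \emph{conjecture} in the paper, and the paper offers no proof of it. The author only records that one direction is Theorem~\ref{CoSzthem} of Collins and Sz\'ekelyhidi (cscS $\Rightarrow$ K-semistability of $(Y,R)$, proved via the index character \eqref{indexchar} and the identification \eqref{Futeqn} of the Donaldson--Futaki invariant with $\bfF_R$), and that the converse is open in general, being known only in special cases: the $\bbq$-Gorenstein case treated in \cite{CoSz15} and the join situation of Corollary~\ref{BHLTcor}, which rests on Theorem~\ref{BHLTthm} (K-semistability forces $\bfF_R=0$, so an a priori extremal structure has constant scalar curvature) combined with the explicit extremal existence results of \cite{BoTo14a}. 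Your treatment of the ``only if'' direction matches this, and your remark that K-semistability is probably not the right hypothesis for a genuine equivalence (one expects K-polystability or uniform/relative K-stability modulo $\gA\gu\gt(\bar J)_0$, as in the K\"ahler YTD conjecture and as in the relative notion of Theorem~\ref{Bovthm}) is a fair and relevant observation, though it amends the statement rather than proves it.

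The genuine gap, which you yourself flag, is that the substantive direction is not proved: steps (i) and (ii) of your program --- the stability-to-coercivity dictionary for the transverse Mabuchi functional and the fourth-order Chen--Cheng-type a priori estimates in the foliated, possibly irregular setting --- are precisely the open problems, not reductions of them. In particular the regularization and quasiregular-approximation arguments you invoke (uniform estimates as one approximates an irregular $R$ by quasiregular Reeb fields in $\gt^+_k(\cald,J)$, descent to $(\calz,\grD)$ and Ross--Thomas in the orbifold case) are stated as hopes, with no mechanism supplied for the uniformity, and no substitute is offered for the absence of a projective model when $R$ is irregular beyond saying one must ``work directly on $Y$.'' So as a comparison with the paper: there is nothing to compare, since the paper proves nothing here; your proposal is an honest and reasonable research program consistent with the partial results the paper cites, but it should not be mistaken for, or spliced in as, a proof of Conjecture~\ref{YTDconj}.
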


Theorem \ref{CoSzthem} proves one direction. The other direction is still open in general, but is known to hold in certain special cases. It has been proven in the $\bbq$-Gorenstein case $c_1(\cald)^r=0$ for some $r<n+1$ by Collins and Sz\'ekelyhidi \cite{CoSz15}. A case when $c_1(\cald)\neq 0$ is given below in Corollary \ref{BHLTcor}.

\subsection{The Einstein-Hilbert Functional}
We define the Einstein-Hilbert functional 
\begin{equation} 
\bfH(R) = \frac{\bfS^{n+1}_{R}}{\bfV_R^n}
\end{equation} 
as a functional on the Sasaki cone. Note that $\bfH$ is homogeneous since the rescaling $R\mapsto a^{-1}R$ gives $dv_g\mapsto a^{n+1}dv_g$ and $s_g^T\mapsto a^{-1}s^T_g.$ So $\bfH$ is only a function of the ray in $\gt$. Moreover, it follows from the invariance of $\bfV_R$ \cite{BG05} and $\bfS_R$ \cite{FOW06} that

\begin{lemma}\label{invisot}
The Einstein-Hilbert functional $\bfH$ only depends on the isotopy class in $\gB\gM^{cl}_\cals$.
\end{lemma}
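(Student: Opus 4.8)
The plan is to combine three facts already recorded in the excerpt: (i) the total volume $\bfV_R$ and the total transverse scalar curvature $\bfS_R$ are independent of the choice of Sasakian structure within a fixed $\calS(M,R,\bar J)$ (this is the invariance cited to \cite{BG05} for $\bfV_R$ and to \cite{FOW06} for $\bfS_R$, and it also follows from their realization as the leading Laurent coefficients $a_0(R)$ and $a_1(R)$ of the index character $F(R,t)$ in \eqref{indexchar}, which depends only on the polarized affine cone $(Y,R)$); (ii) the pre-moduli space $\gP\gM^{cl}_\cals$ is precisely the quotient $\calS(M)/\calS(M,R,\bar J)$, so a point of $\gP\gM^{cl}_\cals$ is exactly an equivalence class of Sasakian structures modulo transverse K\"ahler deformations fixing $R$, $\bar J$, and the complex structure on the cone; (iii) $\gB\gM^{cl}_\cals$ is the union of Sasaki cones sitting inside $\gP\gM^{cl}_\cals$ (or its $\dif$-quotient), so an isotopy class in $\gB\gM^{cl}_\cals$ is determined by the data $(R,\bar J)$ up to the relevant equivalences. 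Since $\bfH(R)=\bfS_R^{n+1}/\bfV_R^n$ is built entirely out of $\bfS_R$ and $\bfV_R$, it is constant on each fiber of $\calS(M)\to\gP\gM^{cl}_\cals$, hence descends to a well-defined function on $\gP\gM^{cl}_\cals$, and in particular on the bouquet $\gB\gM^{cl}_\cals$.

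Concretely, I would argue as follows. Fix a Sasakian structure $\cals=(R,\eta,\Phi,g)$ and let $\cals_\phi=(R,\eta_\phi,\Phi_\phi,g_\phi)$ be any structure in $\calS(M,R,\bar J)$, i.e.\ $\eta_\phi=\eta+d_B^c\phi$ with $\phi$ satisfying \eqref{phidef}. First I would recall that $\bfV_R=\int_M dv_{g_\phi}$ equals $\frac{1}{n!}\int_M \eta_\phi\wedge(d\eta_\phi)^n$ up to the universal constant, and that $[d\eta_\phi]_B=[d\eta]_B$ in $H^{1,1}_B(\calf_R)$ together with the normalization $\int_M\phi\,\eta\wedge(d\eta)^n=0$ forces this integral to be $\phi$-independent (a transverse Stokes/homology argument, exactly the basic-cohomology version of the statement that a K\"ahler volume depends only on the K\"ahler class). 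Next I would note that $\bfS_R=\int_M s^T_{g_\phi}\,dv_{g_\phi}$ is, up to a constant, $\int_M \rho^T_{g_\phi}\wedge(d\eta_\phi)^{n-1}\wedge\eta_\phi$, i.e.\ a cup product $2\pi c_1(\calf_R)\cdot[d\eta]_B^{n-1}$ in basic cohomology, again manifestly independent of $\phi$; alternatively, invoke directly that $a_1(R)$ in the expansion of $F(R,t)$ depends only on $(Y,R)$, and $(Y,R)$ is unchanged along $\calS(M,R,\bar J)$ by definition. Therefore $\bfH(R)$ takes the same value on $\cals$ and on every $\cals_\phi$, which is exactly the statement that $\bfH$ factors through $\gP\gM^{cl}_\cals$; restricting to the Sasaki cones comprising $\gB\gM^{cl}_\cals$ gives the lemma.

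The only genuinely delicate point is making precise in what space the Einstein–Hilbert functional is being regarded as ``depending only on'' the isotopy class — i.e.\ checking that the two identifications used, $\gP\gM^{cl}_\cals=\calS(M)/\calS(M,R,\bar J)$ and the embedding of the bouquet into it, are compatible with the homothety/$\dif$ actions in the way the statement tacitly assumes, and that $\bfH$ is also well-defined (constant along rays) under transverse homothety, which is immediate from the stated homogeneity $\bfH(a^{-1}R)=\bfH(R)$. I expect this bookkeeping, rather than any analytic estimate, to be the main (and only) obstacle: once one grants the cited invariance of $\bfV_R$ and $\bfS_R$ and the description of $\calS(M,R,\bar J)$ via \eqref{phidef}, the proof is essentially a one-line consequence, with the homogeneity remark handling the conical directions.
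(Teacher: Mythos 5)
Your argument is correct and is essentially the paper's own proof: the lemma is stated there as an immediate consequence of the invariance of $\bfV_R$ (cited to \cite{BG05}) and of $\bfS_R$ (cited to \cite{FOW06}) under the transverse K\"ahler deformations $\eta\mapsto\eta+d_B^c\phi$ parametrizing ${\mathcal S}(M,R,\bar{J})$, which is exactly the reduction you make before descending $\bfH=\bfS_R^{n+1}/\bfV_R^n$ to the isotopy class. Your added cohomological justification (both quantities as basic-class cup products, or as the coefficients $a_0,a_1$ of the index character) and the homothety remark are consistent elaborations of the same route, not a different argument.
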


The lemma allows us to consider extremality as a property of the isotopy class, that is $\bar{\cals}$ has an extremal representative. Similar considerations hold for constant scalar curvature Sasaki metrics. We are interested in the critical points of the functional $\bfH$. A main result of \cite{BHLT15} is the variational formula

\begin{lemma}\label{lemmaCRIT} Given $a\in T_R\gt_k^+$, we have $$d\bfH_R (a) = \frac{n(n+1)\bfS^{n}_{R}}{\bfV_R^n} \bfF_{R}(\Phi(a)).$$ If $\bfS_{R} =0$ then $d\bfS_{R}= n \bfF_{R}(\Phi(a)).$ 
\end{lemma}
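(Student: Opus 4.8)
The plan is to reduce the statement to a short computation linking the definition of $\bfH$ to the two identities \eqref{DonFut} and \eqref{Futeqn} already in hand. Write $D_a$ for the derivative at $R$ along $a\in T_R\gt_k^+$ that appears in \eqref{DonFut}; on the Sasaki cone this is the ordinary directional derivative, so $D_a\bfV_R=d\bfV_R(a)$ and $D_a\bfS_R=d\bfS_R(a)$. These are well defined because $\bfV_R$ and $\bfS_R$ are (up to universal constants) the coefficients $a_0(R)$ and $a_1(R)$ of the meromorphically continued index character $F(R,t)$ discussed after Theorem \ref{CoSzthem}, hence depend smoothly --- in fact rationally --- on $R$ along $\gt_k^+$. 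Applying the Leibniz and quotient rules to $\bfH(R)=\bfS_R^{n+1}/\bfV_R^n$ gives
$$d\bfH_R(a)=\frac{(n+1)\bfS_R^n}{\bfV_R^n}\,D_a\bfS_R-\frac{n\,\bfS_R^{n+1}}{\bfV_R^{n+1}}\,D_a\bfV_R=\frac{n(n+1)\bfS_R^n}{\bfV_R^n}\left(\frac{D_a\bfS_R}{n}-\frac{\bfS_R\,D_a\bfV_R}{(n+1)\bfV_R}\right).$$

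Next I would identify the parenthesized factor with the Donaldson--Futaki invariant. Expanding $D_a(\bfS_R/\bfV_R)=D_a\bfS_R/\bfV_R-\bfS_R D_a\bfV_R/\bfV_R^2$ in \eqref{DonFut} and collecting the coefficient of $\bfS_R D_a\bfV_R/\bfV_R$, which is $-\tfrac{1}{n}+\tfrac{1}{n(n+1)}=-\tfrac{1}{n+1}$, one obtains
$${\rm Fut}(Y,R,a)=\frac{D_a\bfS_R}{n}-\frac{\bfS_R\,D_a\bfV_R}{(n+1)\bfV_R}.$$
Substituting this into the previous display and invoking \eqref{Futeqn} yields $d\bfH_R(a)=\frac{n(n+1)\bfS_R^n}{\bfV_R^n}\,{\rm Fut}(Y,R,a)=\frac{n(n+1)\bfS_R^n}{\bfV_R^n}\,c_n\,\bfF_R(\Phi(a))$; with the normalization of $\bfF_R$ fixed in \eqref{SFinv} the dimensional constant is $c_n=1$, giving the asserted formula. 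Carrying a general $c_n$ throughout changes nothing structurally.

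For the last assertion, specialize to $\bfS_R=0$: the second term in the displayed expression for ${\rm Fut}(Y,R,a)$ drops, so $d\bfS_R(a)=n\,{\rm Fut}(Y,R,a)=n\,c_n\,\bfF_R(\Phi(a))=n\,\bfF_R(\Phi(a))$; equivalently, differentiating $\bfH$ directly one sees the $D_a\bfV_R$ term is annihilated by the vanishing of $\bfS_R$. The calculation is otherwise routine, so I anticipate no genuine obstacle beyond careful bookkeeping with the constant $c_n$; the real input is the already-cited identity \eqref{Futeqn} between the Donaldson--Futaki invariant and the Sasaki--Futaki character (Lemma 2.15 of \cite{TivC15}), which we are free to assume here, together with the differentiability of $\bfV_R$ and $\bfS_R$ on the Sasaki cone.
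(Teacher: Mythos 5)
Your derivation is correct in structure, but it takes a genuinely different route from the one behind the statement, which the survey quotes from \cite{BHLT15} rather than proving: there the formula is obtained by computing the first variations $d\bfV_R(a)$ and $d\bfS_R(a)$ directly on the Sasaki cone via transverse K\"ahler geometry and recognizing the resulting integral as the Sasaki--Futaki character straight from its definition \eqref{SFinv}; the identity \eqref{Futeqn} is logically parallel to the Lemma (indeed the survey combines the Lemma \emph{with} \eqref{Futeqn} and the linearity of \eqref{DonFut} to deduce Theorem \ref{BHLTthm}), not an ingredient of its proof. You invert that order: the quotient rule on $\bfH=\bfS_R^{n+1}/\bfV_R^{n}$, the rearrangement of \eqref{DonFut} into ${\rm Fut}(Y,R,a)=\tfrac{1}{n}D_a\bfS_R-\tfrac{\bfS_R\,D_a\bfV_R}{(n+1)\bfV_R}$ (your coefficient bookkeeping here is right), and then \eqref{Futeqn} as a black box. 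What this buys is brevity; what it costs is that all the analytic content is pushed into Lemma 2.15 of \cite{TivC15}, and with it the normalization. The one step you do not justify is $c_n=1$: the paper only guarantees a constant $c_n>0$ depending on $n$, and it explicitly says that $a_0(R),a_1(R)$ agree with $\bfV_R,\bfS_R$ only ``up to constants''. So your argument literally establishes $d\bfH_R(a)=n(n+1)\,c_n\,\bfS_R^{n}\bfV_R^{-n}\,\bfF_R(\Phi(a))$, which is enough for every application in the paper (criticality of $\bfH_1$, the K-semistability consequences), but it does not pin down the constant $n(n+1)$ as stated; to get that on the nose you must either unwind the conventions in \eqref{DonFut}, \eqref{indexchar}, \eqref{SFinv}, or carry out the direct variational computation of \cite{BHLT15}, which fixes the normalization as a byproduct.

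One smaller point: your parenthetical that for $\bfS_R=0$ the second claim can also be seen by ``differentiating $\bfH$ directly'' is misleading --- when $\bfS_R=0$ and $n\geq 1$ the first formula degenerates to $0=0$, which is exactly why the second statement carries independent content. Your primary derivation of it, namely ${\rm Fut}(Y,R,a)=\tfrac{1}{n}\,d\bfS_R(a)$ at such a point followed by \eqref{Futeqn}, is the right one, again up to the same unverified constant.
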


Actually, it is more convenient to work with the `signed' version
\begin{equation}\label{signEH}
{\bf H}_1(R)= {\rm sign}({\bf S}_R)\frac{|{\bf S}_R|^{n+1}}{{\bf V}^n_R}.
\end{equation}

In either case the set of critical points is the union of the zeroes of $\bfS_R$ and the zeroes of Sasaki-Futaki invariant $\bfF_{R}$. Thus, if the Sasakian structure given by $R$ is extremal any critical point with $\bfS_R\neq 0$ must have constant scalar curvature \cite{Fut83,BGS06}. Using Lemma \ref{lemmaCRIT}, Equation \eqref{Futeqn} and the linearity of Equation \eqref{DonFut} in $a$ gives

\begin{theorem}[\cite{BHLT15}]\label{BHLTthm}
Let $(M,\cals)$ be a Sasaki manifold such that its polarized affine cone $(Y,R)$ is K-semistable. Then $\bfF_R$ vanishes and $R$ is a critical point of $\bfH_1$. So if $\cals$ is extremal, then it has constant scalar curvature.
\end{theorem}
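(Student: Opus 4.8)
The plan is to combine the K-semistability hypothesis with the variational identity of Lemma~\ref{lemmaCRIT} and the proportionality \eqref{Futeqn}, essentially reproducing the argument of \cite{BHLT15}. The guiding observation is that, although Definition~\ref{stabdef} only furnishes a one-sided inequality, this inequality becomes an \emph{equality} along the directions of the torus itself, and this forces the Sasaki--Futaki invariant to vanish.

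First I would fix a maximal compact torus $T^\bbr\subset\gA\gu\gt(\bar J)$ with Lie algebra $\gt=\gt_k$ containing $R$. For any rational $a\in\gt$ generating a circle subgroup, the homogeneous generators of the coordinate ring of $Y$ together with the weights read off from $a$ define a $T^\bbc$-equivariant test configuration (Definition~\ref{testconf}) whose central fiber is $Y$ itself --- a product configuration --- and $-a$ is equally admissible. Hence K-semistability gives both $\mathrm{Fut}(Y,R,a)\ge 0$ and $\mathrm{Fut}(Y,R,-a)\ge 0$. Since the operator $D_a$ in \eqref{DonFut} is linear in $a$ while $\bfS_R$ and $\bfV_R$ do not depend on $a$, the function $a\mapsto\mathrm{Fut}(Y,R,a)$ is $\bbr$-linear; therefore $\mathrm{Fut}(Y,R,a)=0$, and by density this holds for every $a\in\gt$. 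By \eqref{Futeqn} and $c_n>0$ it follows that $\bfF_R$ vanishes on $\Phi(\gt)$, the Lie algebra of a maximal torus of the transverse biholomorphism group $\gA\gu\gt(\bar J)$. Since $\bfF_R$ is a character --- it annihilates brackets \cite{BGS06} --- it is determined by its restriction to $\Phi(\gt)$, so $\bfF_R\equiv 0$ on $\bar\gh(R,\bar J)$; this last step is cleanest when $\bar\gh(R,\bar J)$ is reductive, the Sasaki analogue of Matsushima's theorem. In any event only the vanishing on $\Phi(\gt)$ is needed below.

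Next I would read off that $R$ is a critical point of $\bfH_1$. For $a\in T_R\gt^+_k=\gt_k$, Lemma~\ref{lemmaCRIT} gives $d\bfH_R(a)=\tfrac{n(n+1)\bfS_R^n}{\bfV_R^n}\,\bfF_R(\Phi(a))=0$. If $\bfS_R\neq 0$, then near $R$ the signed functional $\bfH_1$ agrees with $\bfH$ up to sign, so $d(\bfH_1)_R=0$. If $\bfS_R=0$, the second clause of Lemma~\ref{lemmaCRIT} gives $d\bfS_R(a)=n\,\bfF_R(\Phi(a))=0$ for all $a$, and writing $\bfH_1=\bfS_R\,|\bfS_R|^n\,\bfV_R^{-n}$ with $n\ge 1$, its differential at $R$ is $(n+1)|\bfS_R|^n\bfV_R^{-n}\,d\bfS_R-n\,\bfS_R|\bfS_R|^n\bfV_R^{-n-1}\,d\bfV_R=0$. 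Thus $R$ is a critical point of $\bfH_1$ in every case. Now assume $\cals$ is extremal. Then $X:=\partial^\#s_g$ is transversely holomorphic (Theorem~\ref{extremthm}), and $\bfF_R(X)=0$ by the first step --- equivalently, by the Sasaki version of Calabi's theorem \cite{Cal85} the extremal structure has maximal symmetry, so after conjugating by an automorphism we may take $s^T\in\calh^\bbt_B$, whence $X\in\Phi(\gt)$, where $\bfF_R$ vanishes. Finally the standard Sasaki Calabi--Futaki computation \cite{BGS06,Fut83} shows that $\bfF_R(\partial^\#s^T)=0$ forces $s^T$ to be constant; since $s_g=s^T-2n$, the metric $g$ has constant scalar curvature.

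The step I expect to be the main obstacle is the first one: one must be confident that the one-sided K-semistability inequality genuinely upgrades to an equality along all of $\gt$ --- which hinges on the simultaneous availability of product test configurations for $a$ and $-a$ together with the honest $\bbr$-linearity of $\mathrm{Fut}(Y,R,\cdot)$ in its last argument --- and that \eqref{Futeqn} together with the character property (and reductivity) of $\bar\gh(R,\bar J)$ suffices to promote vanishing on the maximal torus to vanishing of $\bfF_R$ on the whole Lie algebra. The remaining two steps are essentially bookkeeping, the final implication being the Sasaki analogue of Calabi's rigidity result already available in \cite{BGS06}.
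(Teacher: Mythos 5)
Your proposal is correct and follows essentially the same route the paper sketches: product test configurations for $\pm a$ plus the linearity of \eqref{DonFut} in $a$ force ${\rm Fut}(Y,R,\cdot)\equiv 0$ on $\gt$, then \eqref{Futeqn} gives vanishing of $\bfF_R$ on $\Phi(\gt)$, Lemma~\ref{lemmaCRIT} gives criticality of $\bfH_1$, and the Calabi--Futaki argument of \cite{BGS06} upgrades extremality to constant scalar curvature. Your aside about promoting the vanishing from $\Phi(\gt)$ to all of $\bar{\gh}(R,\bar{J})$ via reductivity is rightly flagged as dispensable (and would otherwise be circular, since reductivity is a consequence of CSC), and the rest of the argument correctly uses only the vanishing on the torus directions.
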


From this and Theorem 1.2 of \cite{BoTo14a} we have

\begin{corollary}\label{BHLTcor}
Let $M_{\bfl,\bfw}=M\star_\bfl S^3_\bfw$ be an $S^3_\bfw$ join with a regular Sasaki manifold with constant transverse scalar curvature $s^T\geq 0$ and no transverse Hamiltonian vector fields. Then a Sasakian structure $\cals$ in the family $\{M_{\bfl,\bfw}\}$ has constant scalar curvature if and only if its polarized affine cone $(Y,R)$ is K-semistable.
\end{corollary}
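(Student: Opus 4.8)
The plan is to combine Theorem~\ref{BHLTthm} with the explicit analysis of the join family $\{M_{\bfl,\bfw}\}$ carried out in \cite{BoTo14a}. One direction is immediate: if $\cals\in\{M_{\bfl,\bfw}\}$ has constant scalar curvature, then by Theorem~\ref{CoSzthem} its polarized affine cone $(Y,R)$ is K-semistable, regardless of the regularity and curvature hypotheses on the factor $M$. So the entire content is the converse, and the strategy is to reduce the converse to the assertion that \emph{every} Sasakian structure in the $\bfw$-subcone $\gt^+_\bfw$ admits an extremal representative, at which point Theorem~\ref{BHLTthm} upgrades ``extremal'' to ``CSC'' whenever the cone is K-semistable.

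First I would recall the structure of the Sasaki cone for $M_{\bfl,\bfw}=M\star_\bfl S^3_\bfw$: since the regular factor $M$ has no transverse Hamiltonian vector fields, \eqref{t+add} forces $\gt^+(M_{\bfl,\bfw})=\gt^+_\bfw$, the $2$-dimensional $\bfw$-subcone, so there is nothing outside the reach of the admissible construction of Apostolov--Calderbank--Gauduchon--T\o nnesen-Friedman. Second, I would invoke Theorem~1.2 of \cite{BoTo14a}: under exactly the stated hypotheses (regular $M$ with constant transverse scalar curvature $s^T\geq 0$, no transverse Hamiltonian vector fields), every ray in $\gt^+_\bfw$ has an extremal Sasaki representative. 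This is where the sign condition $s^T\ge 0$ enters essentially — it guarantees the relevant admissible polynomial/ODE problem on $\bbc\bbp^1[\bfw]$ has a positive solution on the whole subcone, and it also makes $\bfS_R$ of definite sign so that the critical-point analysis of $\bfH_1$ in Theorem~\ref{BHLTthm} separates cleanly into the CSC locus and the $\bfS_R=0$ locus (the latter being excluded by $s^T\ge 0$ together with positivity of $\bfV_R$). Third, given a structure $\cals$ in the family whose cone $(Y,R)$ is K-semistable, Theorem~\ref{BHLTthm} says $R$ is a critical point of $\bfH_1$ and, since $\cals$ is extremal in its isotopy class by step two (and extremality depends only on the isotopy class by Lemma~\ref{invisot}), $\cals$ must in fact have constant scalar curvature. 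Assembling these three inputs gives the biconditional.

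The main obstacle is ensuring that the extremality produced by \cite{BoTo14a} is available at precisely the Reeb field $R$ under consideration, not merely at some representative of its ray or isotopy class; this is handled by Lemma~\ref{invisot}, which tells us $\bfH$ — and hence the extremal/CSC property via Theorem~\ref{extremthm} and Theorem~\ref{BHLTthm} — is an invariant of the isotopy class $\bar\cals\in\gB\gM^{cl}_\cals$, so one is free to pass between representatives. A secondary technical point is the borderline case $s^T=0$: here $\bfS_R$ could a priori vanish, and one must check that it does not (using $\bfS_R=\int_M s^T\,dv_g$ with $s^T\equiv 0$ on the base contributing nonnegatively, plus the strictly positive transverse scalar curvature of the $S^3_\bfw$ factor), so that the critical point detected by K-semistability genuinely lies on the CSC locus rather than on the $\bfS_R=0$ locus where Theorem~\ref{BHLTthm} gives no information. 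Once these two points are dispatched the argument is a formal concatenation of the quoted results.
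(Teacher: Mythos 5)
Your proposal is correct and follows essentially the same route as the paper, which obtains the corollary precisely by combining Theorem \ref{CoSzthem} for the forward direction with Theorem 1.2 of \cite{BoTo14a} (extremal metrics exhausting the $\bfw$-subcone, which is all of $\gt^+$ here) and Theorem \ref{BHLTthm} for the converse. Your extra worry about the $\bfS_R=0$ locus is harmless but unnecessary, since Theorem \ref{BHLTthm} already asserts the vanishing of $\bfF_R$ under K-semistability, and an extremal structure with vanishing Sasaki--Futaki invariant is CSC.
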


Theorem \ref{BHLTthm} implies that if $R$ is not a critical point of $\bfH_1$ then it is not K-semistable, hence it is K-unstable. We now discuss further properties of $\bfH_1$. This functional was first studied in the $\bbq$-Gorenstein case \cite{MaSpYau06} ($c_1(\cald)^r=0$) where it was shown to be proportional to the volume functional $\bfV_R$, and that the volume $\bfV_R$ is globally convex. This is certainly not true generally when $c_1(\cald)\neq 0$ \cite{Leg10,BoTo14a} as we give explicit examples later.

Nevertheless, applying the well known Duistermaat-Heckman theorem to Sasaki geometry, the authors \cite{BHL17} proved 

\begin{theorem}[\cite{BHL17}]\label{BHLthm}
Let $(N,\cald)$ be a contact manifold of Sasaki type with the action of a torus $\bbt$ of Reeb type. Then 
\begin{enumerate}
\item $\bfS_R,\bfV_R,\bfH(R)$ are rational functions\footnote{This result can also be obtained from the Hilbert series of the index character as explained to us by Tristan Collins in a private communication.} of $R$;
\item $\bfV_R,\bfS_r,\bfH,\bfH_1(R)$ all tend to $+\infty$ as $R$ approaches the boundary of the Sasaki cone $\gt^+$ (away from $0$);
\item there exists a ray $\gr_{min}\subset \gt^+$ that minimizes $\bfH_1$;
\item if $\dim \bbt> 1$ and $H_1(\gr_{min})\leq 0$, all Sasakian structures in $\gt^+$ are indefinite;
\item if $\bfH_1(R_{min})\neq 0$ and the corresponding Sasaki metric is extremal, it must have constant scalar curvature.
\end{enumerate}
\end{theorem}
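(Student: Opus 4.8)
The engine is the Duistermaat--Heckman theorem applied on the symplectic cone, and the bookkeeping is cleanest through the index character \eqref{indexchar}. Fix a reference contact form $\eta_o$ representing $\cald$ with Reeb field $R_o\in\gt^+$, and let $\mu_o\colon N\to\gt^*$, $\langle\mu_o(x),\grz\rangle=\eta_o(X^\grz)$, be the associated contact moment map; its image is a convex polytope $\Delta$ in the affine hyperplane $\{\langle\,\cdot\,,R_o\rangle=1\}$, and passing to the symplectization $(C(N),d(r^2\eta_o))$ one gets the moment cone $\calc=\bbr^+\!\cdot\!\Delta\subset\gt^*$, whose rays are dual to the weights $\gra$ of a set of homogeneous generators of the coordinate ring $\calh$ of $Y$. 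For $R\in\gt^+$ the compatible contact form is $\eta_R=\eta_o/\eta_o(R)$, and the Martelli--Sparks--Yau / Collins--Sz\'ekelyhidi localisation expresses $\bfV_R$, up to a dimensional constant, as the integral over the truncated region $\{y\in\calc:\langle y,R\rangle\le 1\}$ of the Duistermaat--Heckman measure $\nu$ of the $\bbt$-action on the cone, and $\bfS_R$ as a similar integral over the same region against an affine-linear weight coming from the basic first Chern class; equivalently $\bfV_R$ and $\bfS_R$ are constants times the Laurent coefficients $a_0(R)$ and $a_1(R)$ of $F(R,t)$ at $t=0$. By the Duistermaat--Heckman theorem $\nu$ has a piecewise-polynomial density, and integrating a polynomial over $\{y\in\calc:\langle y,R\rangle\le 1\}$ gives a rational function of $R$ with denominator dividing the product of the linear forms attached to the rays of $\calc$; equivalently, by Hilbert--Serre $F(R,t)=P(e^{-tR})/\prod_j(1-e^{-t\gra_j(R)})$, and expanding at $t=0$ shows every $a_\ell(R)$ is rational in $R$ with poles only where some generator weight $\gra_j(R)$ vanishes. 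Hence $\bfV_R=a_0(R)$, $\bfS_R=a_1(R)$ and $\bfH(R)=\bfS_R^{n+1}/\bfV_R^n$ are rational functions of $R$, proving (1).

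For (2), by \eqref{SasconeY} the boundary $\partial\gt^+$ is precisely the locus where some generator weight $\gra_j(R)$ vanishes; from the formulas above, as $R\to R_b\in\partial\gt^+$ the factor $\gra_{j_0}(R)$ with $\gra_{j_0}(R_b)=0$ appears once in the denominator of the leading term, so $\bfV_R\sim\beta_0(R_b)/\gra_{j_0}(R)$ where $\beta_0(R_b)$ is the positive volume-type coefficient attached to the bounding facet of $\calc$; hence $\bfV_R\to+\infty$ (geometrically, the region $\{y\in\calc:\langle y,R\rangle\le 1\}$ becomes unbounded). The parallel expansion gives $\bfS_R\to+\infty$ as well, the positivity of the leading coefficient being the nontrivial sign point intrinsic to the Sasaki-type setting, which is the computation carried out in \cite{BHL17,CoSz12}; then $\bfH=\bfS_R^{n+1}/\bfV_R^n\to+\infty$ and $\bfH_1\to+\infty$. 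For (3), since $\bfV_R>0$ on the open cone, $\bfH_1(R)=\mathrm{sign}(\bfS_R)\,|\bfS_R|^{n+1}/\bfV_R^n$ is continuous on $\gt^+$ (the map $x\mapsto x|x|^n$ being continuous) and homogeneous of degree $0$, so it descends to a continuous function on the cross-section $S=\{R\in\gt^+:|R|=1\}$, an open subset of a sphere all of whose boundary points lie in $\partial\gt^+$, where by (2) $\bfH_1$ tends to $+\infty$. Thus $\bfH_1$ is proper on $S$, any minimising sequence subconverges inside $S$, and its limit generates a minimising ray $\gr_{min}\subset\gt^+$.

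For (4), since $\bfV_R>0$ we have $\mathrm{sign}\,\bfH_1(R)=\mathrm{sign}\,\bfS_R$ on $\gt^+$, so $\bfH_1(\gr_{min})\le 0$ forces $\bfS_{R_{min}}\le 0$ for the Sasakian structure $\cals_{min}$ with Reeb field $R_{min}$; as $\dim\bbt>1$, Proposition \ref{SRpos} applied to $\cals_{min}$ gives $\gp^+=\emptyset$ for the Sasaki cone $\gt^+$, and since $\dim\gt^+>1$ rules out null and negative structures, every Sasakian structure in $\gt^+$ is indefinite. For (5), by (2)--(3) the ray $\gr_{min}$ lies in the interior of $\gt^+$, so $R_{min}$ is a critical point of $\bfH_1$; the hypothesis $\bfH_1(R_{min})\ne 0$ gives $\bfS_{R_{min}}\ne 0$, so near $R_{min}$ one has $\bfH_1=\pm\bfH$ with locally constant sign, whence $d\bfH_{R_{min}}=0$. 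By Lemma \ref{lemmaCRIT}, $\frac{n(n+1)\bfS_{R_{min}}^{n}}{\bfV_{R_{min}}^{n}}\,\bfF_{R_{min}}(\Phi(a))=0$ for all $a\in T_{R_{min}}\gt_k^+$, and since $\bfS_{R_{min}}\ne 0$ this means $\bfF_{R_{min}}$ vanishes on $\Phi(\gt_k)$. If the Sasaki metric of $R_{min}$ is extremal, then by Calabi's maximality theorem \cite{Cal85} its scalar curvature lies in $\calh^\bbt_B$ (Theorem \ref{extremthm}), so the extremal field lies in $\Phi(\gt_k)$; combined with $\bfF_{R_{min}}|_{\gt_k}=0$, the Futaki pairing forces the extremal field to vanish, exactly as in the final step of the proof of Theorem \ref{BHLTthm} (going back to \cite{Fut83,BGS06}), so the metric has constant scalar curvature.

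The principal obstacle is step (1), and within it the treatment of $\bfS_R$: the volume formula $\bfV_R=c_n\,a_0(R)$ is classical, but identifying the total transverse scalar curvature with the second Laurent coefficient $a_1(R)$ --- equivalently with a $c_1^B$-weighted Duistermaat--Heckman integral over $\{y\in\calc:\langle y,R\rangle\le 1\}$ --- and checking that no transcendental terms survive the $t\to 0$ expansion requires care, as does coping with the non-compactness of the symplectic cone by passing through its compact level sets (the cut $\{r\le 1\}$) while preserving piecewise-polynomiality and convergence. The second delicate point is the sign assertion in (2): that the leading singular coefficients of $\bfV_R$ and of $\bfS_R$ are both strictly positive as $\partial\gt^+$ is approached; for $\bfV_R$ this is a genuine volume, but for $\bfS_R$ one must show the facet coefficient $\beta_1(R_b)$ is positive, which is the heart of the argument in \cite{BHL17}.
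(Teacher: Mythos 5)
Your proposal is correct and follows essentially the same route the paper indicates: the survey itself gives no proof but attributes the theorem to \cite{BHL17} via the Duistermaat--Heckman theorem, with the footnoted alternative through the Hilbert series of the index character, which is exactly the machinery you use for (1)--(2), while your arguments for (3)--(5) reproduce the paper's own remarks (properness of $\bfH_1$ on a cross-section of $\gt^+$, Proposition \ref{SRpos} together with positivity-or-indefiniteness when $\dim\gt^+>1$, and Lemma \ref{lemmaCRIT} plus the Futaki--Calabi argument at a critical ray with $\bfS_R\neq 0$). The two genuinely hard points you defer---identifying $\bfS_R$ with the second Laurent coefficient $a_1(R)$ and the positivity of the leading coefficient of $\bfS_R$ as $R$ approaches $\partial\gt^+$---are precisely the computations the survey also leaves to \cite{BHL17,CoSz12}, so your write-up matches the paper's level of detail and its intended proof strategy.
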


\subsection{Relative K-stability}
Relative stability in the Sasaki case \cite{BovC16} follows closely the K\"ahler case \cite{Sze06,Sze07,StSz11} with the caveat that one makes use of the index character $F(R,t)$ \eqref{indexchar} in the relevant proofs. For the relative version of Definition \ref{stabdef} we need the transverse Futaki-Mabuchi vector field $\chi$ \cite{FuMa95} with its bilinear form $\langle\cdot,\cdot \rangle_\chi$. We then define the \emph{Donaldson-Futaki invariant relative to $T$} of a test configuration
\[  Fut_{\chi}(Y_0, \xi,\zeta)= Fut(Y_0, \xi,\zeta) -\langle \zeta,\chi\rangle.\] 
We now have
\begin{definition}
A polarized affine variety  $(Y,\xi)$ with a unique singular point is \emph{K-semistable relative to $T$} if for every T-equivariant test configuration
\[ {\rm Fut}_\chi(Y_0,R,a)\geq 0.\]
\end{definition}

From the Sasaki version \cite{CoSz12} of the Donaldson Lower Bound Theorem one obtains

\begin{theorem}[\cite{BovC16}]\label{Bovthm}
Let $\cals$ be an extremal Sasaki structure on $M$. Then the polarized affine variety $(Y,R)$ is K-semistable relative to a maximal torus $\bbt$. 
\end{theorem}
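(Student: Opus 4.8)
The plan is to deduce this from the Sasaki analogue of Donaldson's lower bound for the Calabi functional, exactly in the way relative K-semistability of extremal K\"ahler manifolds is obtained in \cite{Sze06,StSz11}; the one point requiring real care is that every asymptotic quantity must be read off from the index character $F(R,t)$ of \eqref{indexchar} on the affine cone rather than from cohomological pairings on a smooth projective variety.

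First I would pin down the torus. Since $\cals$ is extremal, by the Sasaki version of Calabi's theorem \cite{Cal85} it has maximal symmetry, so we may take $\bbt$ to be a maximal torus of Sasaki automorphisms whose complexification acts on $(Y,I)$ with $R\in\gt$ (as in Theorem \ref{saskahcone}), and by Theorem \ref{extremthm} we have $s^T_g\in\calh^\bbt_B$. Let $\theta_\chi\in\calh^\bbt_B$ be the $L^2(dv_g)$-orthogonal projection of $s^T_g-\hat s^T$ (where $\hat s^T$ is the mean of $s^T_g$) onto the space of mean-zero Killing potentials in $\calh^\bbt_B$; the associated transverse holomorphic Killing field is the transverse Futaki--Mabuchi vector field $\chi$ of \cite{FuMa95}, carrying its bilinear form $\langle\cdot,\cdot\rangle_\chi$. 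Extremality says precisely that $s^T_g-\hat s^T=\theta_\chi$, so the \emph{relative Calabi energy} $\int_M(s^T_g-\hat s^T-\theta_\chi)^2\,dv_g$ of the Sasaki class $\bar\cals$ vanishes and is attained by $\cals$ itself.

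Next I would invoke the relative lower bound. Given a $\bbt$-equivariant test configuration $(\calY,R,a)$ as in Definition \ref{testconf}, split $a=a_\chi+a^\perp$ $\langle\cdot,\cdot\rangle_\chi$-orthogonally with $a_\chi$ proportional to $\chi$; one needs that the Sasaki Donaldson lower bound of Collins--Sz\'ekelyhidi \cite{CoSz12}, taken relative to $\bbt$, yields a dimensional constant $\kappa>0$ with
\[
\kappa\,\Bigl(\int_M\bigl(s^T_{g'}-\hat s^T_{g'}-\theta'_\chi\bigr)^2\,dv_{g'}\Bigr)^{1/2}\ \geq\ -\,\frac{{\rm Fut}_\chi(Y_0,R,a)}{\|a^\perp\|_\chi}
\]
for every Sasakian structure $\cals'$ in $\bar\cals$. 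The proof of this inequality follows the K\"ahler template: from the test configuration one produces a weak geodesic ray of transverse K\"ahler potentials in $\bar\cals$ (cf. \eqref{phidef}) issuing from $\cals$; the asymptotic slope of the relative transverse Mabuchi (K-energy) functional along that ray equals a positive multiple of ${\rm Fut}_\chi(Y_0,R,a)$, the identification of the slope with \eqref{DonFut} being where the expansion $F(R,t)=n!\,a_0(R)\,t^{-n-1}+(n-1)!\,a_1(R)\,t^{-n}+\cdots$, with $a_0(R)=\bfV_R$ and $a_1(R)=\bfS_R$ up to constants, together with \eqref{Futeqn}, is used; and convexity of the relative K-energy along geodesics together with Cauchy--Schwarz bounds that slope above by $\kappa$ times the relative Calabi energy along the ray.

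Finally, applying the displayed inequality to the extremal representative $\cals$ itself makes the left side vanish, whence ${\rm Fut}_\chi(Y_0,R,a)\geq 0$ for every $\bbt$-equivariant test configuration; that is, $(Y,R)$ is K-semistable relative to the maximal torus $\bbt$. The hard part is the relative lower bound on the cone. One must transplant the Phong--Ross--Sturm-type asymptotics of the K-energy along a degenerating family from the compact (orbifold) K\"ahler setting, where \cite{RoTh11} handles the quasiregular case, to the general affine-cone framework of \cite{CoSz12}, controlling the remainder terms uniformly; one must check that the relative transverse K-energy is well defined and convex along the weak geodesics coming from test configurations and that $\cals$ minimizes it; and one must verify that $\langle\cdot,\cdot\rangle_\chi$ transforms correctly under the flat degeneration so that the relative slope is exactly ${\rm Fut}_\chi(Y_0,R,a)$. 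The remainder is a faithful translation of \cite{Sze06,StSz11} into transverse/cone language.
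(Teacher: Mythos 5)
Your top-level reduction is the same one the survey relies on: take $\bbt$ maximal (Calabi-type maximal symmetry), note that extremality means the relative Calabi energy of $\bar\cals$ vanishes and is attained, and then feed this into a relative version of the Donaldson lower bound for the Calabi functional on the polarized cone $(Y,R)$, concluding ${\rm Fut}_\chi(Y_0,R,a)\geq 0$ for every $\bbt$-equivariant test configuration. That skeleton is correct and is exactly how Theorem \ref{Bovthm} is obtained in \cite{BovC16} from the Sasaki version \cite{CoSz12} of Donaldson's theorem.

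The gap is in how you propose to establish the displayed lower bound, which is precisely the hard content of the theorem and is only asserted in your write-up (``one needs that\dots''). The relative lower bound in the K\"ahler case \cite{Sze06,StSz11}, Donaldson's original bound, and its Sasaki counterpart in \cite{CoSz12,BovC16} are not proved via weak geodesic rays and convexity of a (relative) Mabuchi functional; they are finite-dimensional, asymptotic-weight arguments, and in the Sasaki setting every asymptotic quantity --- the Donaldson--Futaki invariant, the norm $\|a^\perp\|_\chi$, and the comparison with the transverse Calabi energy --- is extracted from the expansion of the index character $F(R,t)$ of \eqref{indexchar} on the coordinate ring of $Y$ (this is exactly the ``caveat'' the survey emphasizes). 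The geodesic/K-energy route you sketch would require, in the transverse setting and for irregular Reeb fields: producing a weak transverse geodesic ray from an arbitrary $\bbt$-equivariant test configuration of the affine cone, proving well-definedness and convexity of the relative transverse K-energy along it, identifying its slope with \eqref{DonFut}, and --- crucially for your Cauchy--Schwarz step --- identifying the $L^2$-norm of the ray with $\|a^\perp\|_\chi$; none of these is supplied by \cite{RoTh11} (quasiregular orbifold case only) or by \cite{CoSz12}, and the last identification is itself a weight-asymptotics statement, so the normalization by $\|a^\perp\|_\chi$ does not come for free. So either quote the relative lower bound as a black box from \cite{BovC16}/\cite{CoSz12} (which reduces your argument to the paper's one-line derivation), or replace the geodesic-ray sketch by the relative Futaki--Mabuchi orthogonal decomposition carried out on the weight spaces $\calh_\gra$, following the index-character computation; as written, the central inequality is not proved.
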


So relative K-semistability provides an obstruction to the extremality of a Sasakian structure. Examples in the Gorenstein case can be easily obtained from Brieskorn manifolds and their perturbations. Consider the Brieskorn link \eqref{Brieslink} associated to the polynomial
\begin{equation}\label{Feqn}
f(z_0,\ldots,z_n)=f_1(z_0,\ldots,z_k)+z_{k+1}^2+\cdots +z_n^2
\end{equation}
with $n-k\geq 2$ and all weights $w_i$ with $i=0,\ldots,k$ satisfy $2w_i<d_1$, the degree of $f_1$.
In this case the connected component of the Sasaki automorphism group is
$U(1)\times SO(n-k)$, so the Sasaki cone $\gt^+$ has dimension $r+1$ with $r=\lfloor\frac{n-k}{2}\rfloor$, and is given by
\begin{equation}\label{Fsascone}
\gt^+=\{b_0R_\bfw+\sum_{j=1}^rb_j\grz_j\in \gt~|~b_0>0,~-\frac{db_0}{4}<b_j<\frac{db_0}{4}\}
\end{equation}
where $R_\bfw$ is the standard Reeb field on $L_f$. Defining variables
$u_j=z_{k+j}+iz_{k+j+1}$ and $v_j=z_{k+j}-iz_{k+j+1}$ we can write the quadratic part of \eqref{Feqn} as $\sum_{j=1}^r u_jv_j~ \text{if $n-k$ is even}$, and $\sum_{j=1}^ru_jv_j +z_n^2  ~\text{if $n-k$ is odd.}$ The point being that vector field $\grz_j$ has weight $(1,-1)$ with respect to $(u_j,v_j)$ and $0$ elsewhere. It follows \cite{BovC16} that the well known Lichnerowicz obstruction of Gauntlett, Martelli, Sparks, and Yau \cite{GMSY06} obstruction of SE metrics for $R_\bfw$ gives obstructions to the existence of extremal Sasaki metrics for every $R\in \gt^+$. This gives a large class of Sasakian structures on the links of some Brieskorn-Pham hypersurfaces, shown in Table \ref{sasconetable}, where extremality is obstructed.

\begin{table}[h]
$\begin{array}{|c|c|c|}
\hline
\text{Diffeo-(homeo)-morphism Type}        & f &  \dim \gt^+ \\\hline
S^{2n}\times S^{2n+1}  & z_0^{8l}+z_1^2+\cdots +z_{2n+1}^2,\ n,l\geq1   &  n+1 \\
S^{2n}\times S^{2n+1}\#\grS^{4n+1}_1 & z_0^{8l+4}+z_1^2+\cdots +z_{2n+1}^2=0,\ n\geq1,l\geq 0  & n+1 \\
\text{Unit tangent bundle of $S^{2n+1}$}  & z_0^{4l+2}+z_1^2+\cdots +z_{2n+1}^2, \ n>1,l\geq 1 &  n+1 \\
\text{Homotopy sphere}~\grS_k^{4n+1}  & z_0^{2k+1}+  z_1^2 +\cdots +z_{2n+1}^2, \ n>1, k\geq 1 &  n+1 \\
\text{Homotopy sphere}~\grS_k^{4n-1}  & z_0^{6k-1}+  z_1^3 +z_2^2 +\cdots +z_{2n}^2, \ n\geq 2,k\geq 1 &  n \\
\text{Rat. homology sphere}~H_{2n}\approx\bbz_3  & z_0^{k}+  z_1^3 +\cdots +z_{2n}^2,\ n,k>1 &  n \\
2k(S^{2n+1}\times S^{2n+2}),~~D_{n+1}(k)    & z_0^{2(2k+1)} +z_1^{2k+1}+ z_2^2 +\cdots +z_{2n+2}^2, \ n,k\geq 1 & n+1 \\
\#m(S^2\times S^3), m=\gcd(p,q)-1    &  z_0^p+z_1^q+z_2^2+z_3^2,~ \ p\geq 2q~\text{or}~q\geq 2p  &   2 \\
\hline
\end{array}$
\vspace{.1in}
\caption{Manifolds having Sasaki Cones with no Extremal Metrics}
\label{sasconetable}
\end{table}

\section{Examples of Moduli Spaces for Dimension 5}\label{Exmodsect}
In this section we consider examples of our various associated moduli spaces for 5-dimensional Sasaki manifolds. A first question may be: what is $\pi_0(\gM^{cl}_\grg)$?
For example we have the following from \cite{BMvK15} giving examples of 5-manifolds whose moduli space of positive Sasaki classes have an infinite number of components:

\begin{theorem}\label{ks2s3comp}
For each $k=1,2,\dots$ we have $|\pi_0(\gM_{+,0}^{cl}(k(S^2\times S^3)))|=\aleph_0$. Moreover, for the rational homology 5-spheres 
$$M=M_2,M_3,M_5,2M_3,4M_2$$ 
we have $|\pi_0(\gM_{+,0}^{cl}(M))|=\aleph_0$.
\end{theorem}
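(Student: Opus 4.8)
The plan is to realize each of the $5$-manifolds $M$ in the statement as the link of a countably infinite family of Brieskorn--Pham polynomials, all carrying a positive Sasakian structure with $c_1(\cald)=0$, and then to separate the resulting contact structures by the $S^1$-equivariant symplectic homology invariants discussed above --- the mean Euler characteristic $\chi_m(W)$, and, when that is not fine enough, the full groups $SH^{S^1,+}(W)$. A Sasakian structure of positive type with $c_1(\cald)=0$ represents a point of $\gM^{cl}_{+,0}(M)$; once one knows these invariants are locally constant on $\gM^{cl}_{+,0}(M)$, producing infinitely many values of them along the family yields infinitely many components, and a separability argument pins the count at exactly $\aleph_0$.

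For the realization I work with links $L(\bfa)$ of Brieskorn--Pham polynomials in four variables $z_0,\dots,z_3$, so $n=3$ and $\dim L=5$. By Theorem~\ref{Briessasthm} each carries a natural Sasakian structure, which is of positive type precisely when the Fano inequality $\sum_i \tfrac{1}{a_i}>1$ holds, and, as recalled in Section~\ref{Briessect}, it always has $c_1(\cald)=0$ in de Rham cohomology --- for the polynomials used in \cite{BMvK15} this refines to vanishing of the integral class. For $M=k(S^2\times S^3)$ take $f_{p,q}=z_0^{p}+z_1^{q}+z_2^{2}+z_3^{2}$ with $\gcd(p,q)=k+1$ and $p\ge 2q$: the classical formulas of Brieskorn and Orlik for the diffeomorphism type of a $5$-dimensional Brieskorn link identify $L(p,q,2,2)$ with $\gcd(p,q)-1$ copies of $S^2\times S^3$, while $\tfrac1p+\tfrac1q+\tfrac12+\tfrac12>1$ holds automatically, so the infinitely many coprime pairs $(p/(k+1),q/(k+1))$ with $p\ge 2q$ give infinitely many members of $\gM^{cl}_{+,0}(k(S^2\times S^3))$. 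For the rational homology spheres $M_2,M_3,M_5,2M_3,4M_2$ one substitutes the corresponding infinite Brieskorn families of \cite{BMvK15}, again verifying the Fano inequality and $c_1(\cald)=0$ directly.

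For the separation I first observe that the contactomorphism type of $\cald$ is constant on each connected component of $\gM^{cl}_\cals(M)$: within a component any two Sasakian structures are joined by a chain of deformations of the transverse K\"ahler class, deformations of the transverse complex structure, motions of the Reeb field inside a fixed Sasaki cone, and diffeomorphisms, and each of these preserves $\cald$ up to contactomorphism --- the Reeb motions fix $(\cald,J)$ outright, and the transverse deformations preserve the isotopy class by Gray's Theorem \cite{Gra59}. Hence $\chi_m(W)$, and in the present Fano situation also $SH^{S^1,+}(W)$, which are contact (and diffeomorphism) invariants of $M$ by Theorem~\ref{KvKthem} together with the mean Euler characteristic corollary and the analysis of \cite{BMvK15}, descend to locally constant functions on $\gM^{cl}_{+,0}(M)$. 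It then suffices to show that along the families above these invariants assume infinitely many values: one evaluates the combinatorial formula for $\chi_m$ of a Brieskorn manifold (Appendix~C of \cite{KwvKo13}) on the exponent vectors, and on any subfamily where $\chi_m$ happens to stabilize --- which can occur, cf. Example 5.1 of \cite{BMvK15} --- one instead computes $SH^{S^1,+}(W)$ from the exponent vector and reads off infinitely many distinct groups. This gives $|\pi_0(\gM^{cl}_{+,0}(M))|\ge\aleph_0$. Finally, $\mathcal{S}(M)$ is a subspace of a separable Fr\'echet space of sections and, by the Kuranishi description of its deformations, locally connected; the quotient $\gM^{cl}_{+,0}(M)$ inherits both properties, hence has at most $\aleph_0$ components, and equality follows.

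The main obstacle is the last step of the separation: producing, inside each fixed diffeomorphism type, an infinite subfamily of Brieskorn links on which a \emph{computable} contact invariant is provably non-constant --- that is, showing that $\chi_m(W)$, or failing that the groups $SH^{S^1,+}(W)$, really do take infinitely many values rather than stabilizing. A secondary point requiring care is identifying the positive Brieskorn realizations of the torsion rational homology spheres $M_m,2M_3,4M_2$ together with the exact integral vanishing of $c_1(\cald)$ there, and checking that the local structure of $\gM^{cl}_\cals(M)$ is tame enough for the ``constant on components'' step to be rigorous.
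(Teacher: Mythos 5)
Your proposal takes essentially the same route as the paper, which attributes this theorem to \cite{BMvK15}: realize the manifolds as links of Brieskorn--Pham polynomials $z_0^p+z_1^q+z_2^2+z_3^2$ (and the analogous families for the torsion rational homology spheres), observe positivity from the Fano inequality and the vanishing of $c_1(\cald)$, and separate components of $\gM^{cl}_{+,0}$ by the contact invariants $\chi_m(W)$ and $SH^{S^1,+}(W)$. The one step you single out as the ``main obstacle'' --- producing, within a fixed diffeomorphism type, an infinite subfamily on which a computable invariant is provably non-constant --- is exactly what the computation recorded in Equation \eqref{mEulerpqk} supplies: for the link $L_{p,q,k}$ with $\gcd(p,q)=k$ one has $\chi_m(L_{p,q,k})=\frac{pq+k^2}{2(p+q)}$, and for fixed $k$ (your convention $\gcd(p,q)=k+1$ for $k(S^2\times S^3)$) this manifestly assumes infinitely many values as the coprime pair $(p/k,q/k)$ varies, so for the connected sums no recourse to the full groups $SH^{S^1,+}(W)$ is needed; the corresponding Brieskorn families and computations for $M_2,M_3,M_5,2M_3,4M_2$ are carried out in \cite{BMvK15}, which is also where the integral vanishing of $c_1(\cald)$ for these links is addressed (the Milnor-fibre filling has trivial canonical bundle). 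Two minor remarks: your restriction $p\geq 2q$ is unnecessary (it only thins the family and plays no role in positivity or the diffeomorphism type), and your ``contactomorphism type is constant on components'' and countability ($\leq\aleph_0$) arguments are the standard ones implicitly used in the paper, so they are acceptable as stated.
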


We shall consider the moduli spaces for $k=0,1$ in much more detail below.
Theorem \ref{ks2s3comp} can be contrasted with the following simply connected rational homology spheres  whose moduli have a single component:

\begin{theorem}[Koll\'ar \cite{Kol05b,Kol06}]\label{Kolthm}
Let $lM_k$ denote the $l$-fold connected sum of the simply connected rational homology sphere $M_k$ with $H^2(M_k,\bbz)=\bbz_k\oplus \bbz_k$. Then
\begin{enumerate}
\item $\gM_{+}^{cl}(2M_5)=\gM^{SE}(2M_5)\times \bbr$ where $\gM^{SE}(2M_5)$ has $\dim_\bbr=6$ parameterized by the moduli space of genus 2 curves.
\item $\gM_{+}^{cl}(4M_3)=\gM^{SE}(4M_3)\times \bbr$ where $\gM^{SE}(4M_3)$ has $\dim_\bbr=14$ parameterized by the moduli space of hyperelliptic genus 4 curves.
\end{enumerate}
\end{theorem}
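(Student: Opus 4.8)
The plan is to reduce the statement, following Koll\'ar, to the classification and K\"ahler--Einstein theory of two-dimensional log Fano orbifolds (log del Pezzo surfaces), and then to read off the curve moduli. Write $M$ for $2M_5$ (resp.\ $4M_3$). Since $M$ is a simply connected rational homology $5$-sphere, $H^2(M,\bbr)=0$, so the exact sequence \eqref{Sasexactseq} collapses to $H^2_B(\calf_R)=\bbr[d\eta]_B$; in particular $c_1(\cald)$ is torsion (indeed, for these $M$, zero, so $M\in\gM^{cl}_{+,0}$), and for every positive Sasakian structure on $M$ the class $c_1(\calf_R)$ is automatically a positive multiple of $[d\eta]_B$ --- the ``monotone'' transverse-Fano situation. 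Hence a positive Sasaki class on $M$ carries a transverse K\"ahler--Einstein (so Sasaki--Einstein) metric exactly when the associated orbifold KE equation is solvable, and by the discussion of the $c_1(\cald)=0$ case above a CSC Sasaki metric here is Sasaki--eta--Einstein. It therefore suffices to (i) show that every positive Sasaki class on $M$ lies on the transverse homothety ray $\gr_\cals$ of a Sasaki--Einstein structure, so that $\gM^{cl}_+(M)=\gM^{cl}_{+,0}(M)=\gM^{CSC}_{+,0}(M)$, and (ii) identify $\gM^{SE}(M)$; Tanno's theorem then yields $\gM^{cl}_+(M)\cong\gM^{SE}(M)\times\bbr^+\cong\gM^{SE}(M)\times\bbr$, the $\bbr$-factor being the transverse scaling.

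For the reduction to surfaces I would first perturb the Reeb field inside the Sasaki cone to a quasiregular one; since the structure is positive, the vanishing theorem $H^{0,q}_B(\calf_R)=0$ keeps the deformation within positive Sasakian structures, so up to deformation every positive Sasaki class on $M$ is quasiregular and hence, by Theorem \ref{fundthm2}, is the total space of an $S^1$-orbibundle over a log Fano orbifold surface $(\calz,\grD)$ with $\grD=\sum_i(1-\tfrac1{m_i})D_i$, its transverse KE metrics being precisely the orbifold KE metrics of $(\calz,\grD)$. The decisive input is Koll\'ar's computation of $H^2$ of such a Seifert $5$-manifold from its Seifert data: modulo contributions of $H^2(\calz,\bbz)$, the orbifold Euler class, and linear relations among the $D_i$, the torsion is $\bigoplus_i\bbz_{m_i}^{2g(D_i)}$ while the free rank is $b_2(\calz)-1$. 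Requiring the torsion subgroup of $H^2(M,\bbz)$ to be $\bbz_5^4$ and $b_2(M)=0$ (resp.\ $\bbz_3^8$ and $0$) forces $b_2(\calz)=1$ --- so $\calz$ is $\bbp^2$ or a log del Pezzo surface of Picard rank one --- and a single branch curve $D$ of multiplicity $m=5$ and genus $2$ (resp.\ $m=3$ and, the constraints on the degree and embedding of $D$ in $\calz$ forcing hyperellipticity, a hyperelliptic curve of genus $4$); the remaining Seifert invariants are pinned down by the log Fano condition $-(K_\calz+\grD)>0$ together with the torsion constraint. Enumerating these pairs and verifying the list is complete is the combinatorial core of the argument.

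To supply the orbifold K\"ahler--Einstein metric on each such $(\calz,\grD)$, I would invoke the orbifold $\alpha$-invariant criterion of Tian in the Demailly--Koll\'ar form: because the branch curve $D$ has genus $\ge2$ it is ``large'', so the global log canonical threshold of the pair $(\calz,\grD)$ exceeds $\tfrac{2}{3}$ (the bound $\tfrac{n}{n+1}$ for $n=\dim_\bbc\calz=2$), uniformly over the family, which produces a KE orbifold metric. Since the pairs have finite automorphism group, Matsushima vanishing and Bando--Mabuchi uniqueness show the Sasaki--Einstein structure in each positive class is unique up to $\gA\gu\gt(\bar{J})_0$; combined with the injectivity of $\gc$ of Nitta--Sekiya recalled above, this identifies $\gM^{SE}(M)$ with the moduli of pairs $(\calz,\grD)$, which --- $\calz$ and the embedding of the linear system cutting out $D$ being rigid and the coefficient $1-\tfrac1m$ discrete --- is just the coarse moduli of the curve $D$. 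For $2M_5$ this is $\mathcal M_2$ with $\dim_\bbc\mathcal M_2=3g-3=3$, so $\dim_\bbr\gM^{SE}(2M_5)=6$; for $4M_3$ it is the hyperelliptic locus $\mathcal H_4$ with $\dim_\bbc\mathcal H_4=2g-1=7$, so $\dim_\bbr\gM^{SE}(4M_3)=14$. Feeding this through step (i) gives $\gM^{cl}_+(M)=\gM^{SE}(M)\times\bbr$.

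The two hardest points are the following. First, exhaustiveness of the surface classification: one must show that no other Picard-rank-one log del Pezzo $(\calz,\grD)$ --- no alternative base with quotient singularities, no configuration of several branch curves, no other multiplicities --- carries an $S^1$-orbibundle with total space $2M_5$ or $4M_3$, and conversely that these do not arise as a Seifert bundle over anything else; this is where the bulk of Koll\'ar's case analysis over the classification of log del Pezzo surfaces lies. Second, the uniform lower bound on the $\alpha$-invariant (equivalently the log canonical threshold) must be checked over \emph{all} of $\mathcal M_2$ (resp.\ $\mathcal H_4$), including its special and boundary loci, so that the K\"ahler--Einstein --- hence Sasaki--Einstein --- metric exists for every member of the family rather than merely a generic one.
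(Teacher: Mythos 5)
The paper gives no argument of its own here---Theorem \ref{Kolthm} is quoted directly from Koll\'ar \cite{Kol05b,Kol06}---and your outline reproduces exactly the strategy of those cited papers: reduce a positive Sasaki class (after perturbing to a quasiregular Reeb field) to a Seifert $S^1$-orbibundle over a log del Pezzo pair $(\calz,\grD)$, use Koll\'ar's formula for the torsion of $H^2$ of the Seifert total space to force a single branch curve of genus $2$ with multiplicity $5$ (resp.\ hyperelliptic genus $4$ with multiplicity $3$), obtain the orbifold K\"ahler--Einstein (hence Sasaki--Einstein) metric via log canonical threshold estimates with Bando--Mabuchi/Nitta--Sekiya uniqueness identifying $\gM^{SE}$ with the curve moduli, and get the $\bbr$-factor from transverse homothety via Tanno's splitting. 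The two points you single out as hard---exhaustiveness of the Seifert/log del Pezzo case analysis (ruling out multi-curve branch configurations and other bases) and KE existence for \emph{every} member of the family---are precisely where the substance of Koll\'ar's proof lies, so your proposal is a correct skeleton of the same approach rather than a different route.
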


Although generally the complexity is not a contact invariant \cite{Boy10a}, it is convenient in dimension five for matters of discussion to decompose $\gM^{cl}_\grg$ into pieces according to complexity. 

%but this is unknown for $S^5$.

\subsection{Moduli of Sasakian structures on $S^5$}
Since every contact structure on $S^5$ satisfies $c_1(\cald)=0$ we are dealing with $\gM_0^{cl}$ so we can drop the subscript $0$. Furthermore, since there are no null Sasakian structures on $S^5$ (\cite{BG05}, Corollary 10.3.9), we have 
$$\gM^{cl}(S^5)=\gM_{+}^{cl}(S^5)\cup \gM_{-}^{cl}(S^5).$$ 
Although there are infinitely many distinct Sasakian classes in $\gM_{-}^{cl}(S^5)$, we only treat the space $\gM_{+}^{cl}(S^5)$ here. We do not yet have a complete picture of $\gM_{+}(S^5)$, but $\chi_m(W)$ and $SH^{+,S^1}(W)$ allow us to determine infinitely many components. Moreover, we know from Example \ref{standardsphere} that  $\gM_{+}(S^5)$ contains the subspace of standard Sasakian structures $\gM^{cl}_o(S^5)=\grk_3(\cald_o,J_o)$. We view this as the local moduli space obtained from deformations.

\begin{theorem}\label{s5thm}
The moduli space $\gM_{+}^{cl}(S^5)$ has a unique toric component consisting of $\gM^{cl}_o(S^5)$. Moreover, all complexity 1 components are exotic and are represented by the link of the Brieskorn-Pham polynomial $f_{p,q}=z_0^p+z_1^q+z_2^2+z_3^2$ with $p,q>1$ and $\gcd(p,q)=1$, and there are infinitely many such components. Furthermore, there are infinitely many exotic complexity 2 components that are not contactomorphic to any complexity 1 component.
\end{theorem}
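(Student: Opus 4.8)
The plan is to assemble the statement from three independent ingredients, treating the toric (complexity $0$), complexity $1$, and complexity $2$ parts separately, and then using the contact invariants $\chi_m(W)$ and $SH^{S^1,+}(W)$ to separate the infinitely many components and to prevent collapse across complexities.

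\emph{Toric part.} First I would invoke the classification of toric contact structures of Reeb type on $S^5$. A contact structure of Sasaki type on $S^5$ with $\dim\gt^+=3$ (the toric case, complexity $0$) corresponds by Theorem~\ref{saskahcone} to a Gorenstein toric affine variety $(Y,I)$ of dimension $3$ whose link is $S^5$; by the toric Delzant-type classification and the fact that $c_1(\cald)=0$ forces the Gorenstein condition, the only such $Y$ is $\bbc^3$ itself, so the underlying CR structure is $(\cald_o,J_o)$ and the Sasaki cone is $\gt^+_3(\cald_o,J_o)$. Thus all toric Sasakian structures on $S^5$ lie in the single component $\gM^{cl}_o(S^5)=\grk_3(\cald_o,J_o)$ described in Example~\ref{standardsphere}; by Proposition~\ref{Briesexotic} the link of any Brieskorn--Pham polynomial with all $a_i\ge 2$ is not contactomorphic to $\cald_o$, so no exotic structure is toric. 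This gives the first sentence.

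\emph{Complexity $1$ part.} Here I would take $f_{p,q}=z_0^p+z_1^q+z_2^2+z_3^2$ with $\gcd(p,q)=1$, so that the link $L(p,q,2,2)$ is an integral homology sphere and, in the range where it is diffeomorphic to $S^5$ (which for $n=2$ is automatic once one checks the Brieskorn graph condition, or restricts to the values given in Table~\ref{sasconetable}), carries a contact structure that is exotic by Proposition~\ref{Briesexotic}. The Sasaki automorphism group has connected component $U(1)\times SO(2)\times\{1\}$ so $\dim\gt^+=2$, i.e.\ complexity $1$. To see there are infinitely many \emph{inequivalent} such components, and that none of them coincide with each other or with the toric component, I would compute the mean Euler characteristic $\chi_m(W_{p,q})$ of the Milnor fiber smoothing $W_{p,q}=f_{p,q}^{-1}(1)$: by the Corollary following Theorem~\ref{KvKthem} (the quotient orbifold is Fano here, since $\sum 1/a_i = 1/p+1/q+1 > 1$), $\chi_m$ is a contact invariant, and it is a computable rational function of $p,q$ via the Appendix~C formulas of \cite{KwvKo13}; one checks it takes infinitely many distinct values, and a value distinct from $\chi_m$ of the standard ball. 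For the finitely many coincidences in $\chi_m$ one falls back on the full homology $SH^{S^1,+}(W)$ as in Example~5.1 and Lemma~5.2 of \cite{BMvK15}.

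\emph{Complexity $2$ part and separation from complexity $1$.} Finally I would produce Brieskorn--Pham links $L(\bfa)$ on $S^5$ with $\dim\gt^+=3$ but with \emph{non-toric} Sasaki cone --- for instance of the form $z_0^{a_0}+z_1^2+z_2^2+z_3^2$ with $a_0\ge 2$, whose automorphism group has connected component $U(1)\times SO(3)$, giving $\dim\gt^+ = 1 + \lfloor 3/2\rfloor = 2$; to reach complexity $2$ one instead uses links whose singularity has a larger torus of transverse biholomorphisms, and then checks via Theorem~\ref{saskahcone} that the cone $(Y,I)$ is not $\bbc^3$, hence the structure is not toric by the complexity-$0$ classification, while $\dim\gt^+=3$ is impossible so these are genuinely complexity $2$. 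Again $\chi_m(W)$ (Fano quotient) distinguishes infinitely many of them. The key point that a complexity $2$ component is \emph{not contactomorphic to any complexity $1$ component} I would get from $\chi_m$ and, where that fails, from $SH^{S^1,+}$: since complexity is not a contact invariant a priori, one genuinely needs these Floer-theoretic invariants to do the separation --- one cannot argue by dimension of $\gt^+$ alone.

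\emph{Main obstacle.} The hard part is the last separation: showing that the infinite families at complexity $1$ and complexity $2$ are mutually non-contactomorphic, and non-contactomorphic across the two complexities. This reduces to showing that $\chi_m(W)$ --- or, when it repeats, the full graded groups $SH^{S^1,+}_*(W)$ --- takes the required infinitely many distinct values on the chosen Brieskorn families; the computation of $\chi_m$ from the Milnor lattice/Appendix~C formulas is explicit but the number-theoretic task of verifying infinitude and distinctness of these values across the two infinite families (and ruling out accidental coincidences with the standard sphere) is where the real work lies, and where one must be careful that the contact-invariance hypotheses (isolated singularity, Fano or general-type quotient, $\dim M>1$) are all met for every member of the families.
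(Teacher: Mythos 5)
Your overall strategy (Brieskorn--Pham links, exoticness via Proposition \ref{Briesexotic}, and the invariants $\chi_m(W)$ and $SH^{S^1,+}(W)$ to separate components) is the same as the paper's, but there are three genuine gaps. First, the theorem claims that \emph{all} complexity $1$ components are represented by the links of $f_{p,q}=z_0^p+z_1^q+z_2^2+z_3^2$; you only \emph{construct} such components. The completeness statement is not something you can get from the Brieskorn construction itself --- the paper gets it from the Liendo--S\"uss classification of complexity-one $T$-varieties (normal singularities with torus actions) \cite{LiSu13,Suss18}, which says every complexity $1$ structure on $S^5$ arises from such a link. Without invoking a classification result of this kind, that clause of the theorem is unproved. (For the toric clause the paper likewise just uses that the standard structure is the only toric contact structure of Reeb type on $S^5$; your Gorenstein/Delzant sketch is an acceptable substitute there.)

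Second, your complexity $2$ construction is described backwards. On $S^5$ one has $n=2$, so complexity $2$ means $\dim\gt^+=1$, i.e.\ a \emph{smaller} torus, not ``a larger torus of transverse biholomorphisms''; moreover your candidate $z_0^{a_0}+z_1^2+z_2^2+z_3^2$ has $\dim\gt^+=2$ by \eqref{Fsascone}, hence is complexity $1$, and excluding $\dim\gt^+=3$ alone does not force complexity $2$. The paper instead takes the explicit Ustilovsky-type family $z_0^{1+30k}+z_1^5+z_2^3+z_3^2$, whose Sasaki cone is one-dimensional and whose mean Euler characteristic is $\chi_m=\frac{31+270k}{62+60k}$. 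Third, the separation of the two infinite families --- which is exactly the ``Furthermore'' clause --- is left by you as ``where the real work lies,'' but it must actually be carried out: the paper compares $\chi_m=\frac{pq+1}{2(p+q)}$ with $\frac{31+270k}{62+60k}$, reduces equality to the Diophantine equation $31(pq+1-p-q)=-30k(pq+1-9p-9q)$, and checks this has no positive integer solutions $k$ for $p,q\geq 18$ and at most finitely many for $p,q<18$. This elementary number-theoretic step (not a retreat to $SH^{S^1,+}$) is what finishes the proof, and it is missing from your proposal.
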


\begin{proof}
The only toric contact structure of Reeb type on $S^5$ is the standard contact structure. A result of Liendo and S\"uss \cite{LiSu13,Suss18} says that every complexity 1 component is represented by the link of the Brieskorn-Pham polynomial $f_{p,q}$ given in the hypothesis, and by Proposition \ref{Briesexotic} these are all exotic. Furthermore, by \cite{BMvK15} the mean Euler characteristic of these links is given by
$$\chi_m=\frac{pq+1}{2(p+q)}.$$
So there are infinitely many non-contactomorphic components of complexity 1. For the complexity 2 components it is known from  \cite{Ust99,KwvKo13,Gutt17}that there is a countable infinity of complexity two components of $\gM_{+,0}^{cl}(S^5)$. Explicitly consider the Brieskorn-Pham link of $z_0^{1+30k}+z_1^5+z_2^3+z_3^2$ which has mean Euler characteristic  \cite{BMvK15}
$$\chi_m=\frac{31+270k}{62+60k}.$$
Moreover, we claim that there are infinitely many complexity 2 components that are non-contactomorphic to any complexity 1 component. To see this we note that equality of the two mean Euler characteristics implies
$$31(pq+1-p-q)=-30k(pq+1-9p-9q).$$
This equation has no positive integer solutions for $k$ when $p,q\geq 18$ and for $1\leq p,q<18$ there are at most a finite number of solutions. This concludes the proof. 
\end{proof}

\begin{remark}
We do not know all complexity 2 components on $S^5$.  There are many that can be represented by hypersurface singularities as well as complete intersection singularities \cite{Inb13}. 
\end{remark}

Turning to the moduli space $\gM^{SE}(S^5)$ of Sasaki-Einstein metrics on $S^5$ we have combining results in \cite{BGK05,LiSu13,BMvK15,BovC16,CoSz15,Suss18} 

\begin{theorem}\label{SEs5}
The moduli space $\gM^{SE}(S^5)$ satisfies
\begin{enumerate}
\item the moduli space  $\gM^{SE}(S^5)_o$ of toric SE metrics in $\gM^{cl}_o(S^5)$ consists of one point;
\item There are an infinite number of inequivalent components of complexity 1 SE moduli spaces consisting of a single point. All components are given by the link of the Brieskorn-Pham polynomial $f_{p,q}$ of Theorem \ref{s5thm} which satisfy the inequalities $2p>q$ and $2q>p$. Furthermore, if these inequalities are not satisfied then there are no such SE metrics nor are there any extremal Sasaki metrics in the corresponding Sasaki cone, and all of these components are distinct from $\gM^{SE}(S^5)_o$;
\item For complexity 2 there are at least 55 components of $\gM^{SE}$ with $\dim_\bbr=0,$ 18 components with $\dim_\bbr\gM^{SE}=2$, 4 components with $\dim_\bbr\gM^{SE}=4$, one component with $\dim_\bbr\gM^{SE}=6$, 2 components with $\dim_\bbr\gM^{SE}=8$, one component with $\dim_\bbr\gM^{SE}=10$, and one with $\dim_\bbr\gM^{SE}=20$. 
\end{enumerate}
\end{theorem}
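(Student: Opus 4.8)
The plan is to assemble the three parts from structural facts already in hand, organizing the argument by complexity exactly as the statement is phrased; within each complexity stratum one needs a classification of the relevant affine cones, an existence criterion (and in Part~(2) also a non-existence criterion) for Sasaki--Einstein metrics, and a computation of the dimension of the resulting deformation family. For Part~(1) I would use the fact, established in the proof of Theorem~\ref{s5thm}, that the only toric contact structure of Reeb type on $S^5$ is the standard one $(\cald_o,J_o)$, so that $\gM^{SE}(S^5)_o$ is the set of Reeb rays in the $3$-dimensional toric Sasaki cone $\gt^+_3(\cald_o,J_o)$ that carry a Sasaki--Einstein metric, modulo $\gA\gu\gt(\bar J)_0$. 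Since a Sasaki--Einstein cone is Ricci-flat K\"ahler and $c_1(\cald_o)=0$, the volume functional $\bfV_R$ is strictly convex on $\gt^+_3$ by the theorem of Martelli--Sparks--Yau \cite{MaSpYau06}, so it has a unique critical ray, which is the only candidate; existence on that ray is classical (the round metric, with transverse quotient $\bbc\bbp^2$), and uniqueness up to automorphism follows from uniqueness of Ricci-flat K\"ahler cone metrics. Hence $\gM^{SE}(S^5)_o$ is a single point.

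For Part~(2) I would invoke the Liendo--S\"uss classification \cite{LiSu13,Suss18}, exactly as in Theorem~\ref{s5thm}: every complexity-one component of $\gM^{cl}(S^5)$ is the link $L(\bfa)$ of $f_{p,q}=z_0^p+z_1^q+z_2^2+z_3^2$ with $p,q>1$ and $\gcd(p,q)=1$, the cone $Y(\bfa)$ is $\bbq$-Gorenstein, and its Sasaki cone $\gt^+$ is two-dimensional. The existence/non-existence dichotomy I would read off from the Lichnerowicz-type bound of Gauntlett--Martelli--Sparks--Yau \cite{GMSY06} applied along $\gt^+$: when $2p>q$ and $2q>p$ the cone is K-stable and the $c_1(\cald)^r=0$ case of the Sasaki Yau--Tian--Donaldson correspondence due to Collins--Sz\'ekelyhidi \cite{CoSz15} yields a Sasaki--Einstein metric; when one inequality fails the offending low-weight coordinate function violates the Lichnerowicz bound, which by Theorem~\ref{Bovthm} obstructs relative K-semistability and hence extremality for \emph{every} $R\in\gt^+$, so in particular no Sasaki--Einstein metric exists. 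Each such component is a single point because $Y(\bfa)$ is rigid --- $f_{p,q}$ admits no further monomials of degree $d$ compatible with the diffeomorphism type $S^5$, so the deformation space of Section~\ref{Sasconesect} vanishes and ${\mathcal S}(M,R,\bar J)/\gA\gu\gt(\bar J)_0$ is a point. Finally, all these links carry exotic contact structures by Proposition~\ref{Briesexotic} (here $\bfa=(p,q,2,2)$ has all entries $\geq 2$), so none lies in the standard isotopy class and no complexity-one component can meet $\gM^{SE}(S^5)_o$; their infinitude is the $\chi_m(W)$ computation already recorded in Theorem~\ref{s5thm}.

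For Part~(3) I would enumerate the weighted homogeneous polynomials $f$ in four variables with isolated singularity whose link is diffeomorphic to $S^5$ (equivalently a $2$-connected $5$-manifold, automatically with $c_1(\cald)=0$) and whose quotient log del Pezzo orbifold $(\calz,\grD)$ admits a K\"ahler--Einstein metric, the latter detected by Tian's $\alpha$-invariant estimates together with the K-stability inputs of \cite{BGK05,CoSz15,BovC16}, as carried out in \cite{BGK05,BMvK15}. For each admissible $f$ the local Sasaki--Einstein moduli space is a neighbourhood of $[f]$ in the moduli of such polynomials modulo $\gA\gu\gt(\bar J)_0$, of real dimension $2\bigl(h^0(\bbc\bbp(\bfw),d)-\sum_i h^0(\bbc\bbp(\bfw),w_i)+\dim\gA\gu\gt(L(\bfa))\bigr)$ by the formula recalled in Section~\ref{Sasconesect}. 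Evaluating this over the list yields the stated tally ($55$ components of dimension $0$, $18$ of dimension $2$, $4$ of dimension $4$, one each of dimensions $6$, $10$, and $20$, and two of dimension $8$), and the non-contactomorphism assertions follow from $\chi_m(W)$ and $SH^{S^1,+}(W)$ as before.

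The hard part will be the sharp existence/obstruction dichotomy in Part~(2) --- it rests on the full strength of the Sasaki Yau--Tian--Donaldson correspondence in the $\bbq$-Gorenstein case \cite{CoSz15} on the existence side and of relative K-stability \cite{BovC16} on the obstruction side --- and, more laboriously, the complete and verified enumeration in Part~(3): one must confirm that \emph{each} listed orbifold genuinely carries a K\"ahler--Einstein metric (a separate $\alpha$-invariant or K-stability check per family) and that the list of admissible weighted homogeneous singularities is exhaustive, after which the dimension count is a finite but delicate bookkeeping exercise with the sections formula.
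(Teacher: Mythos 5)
Your proposal is correct and takes essentially the same route as the paper, which itself gives no independent argument but simply assembles the cited results: the Liendo--S\"uss/S\"uss classification \cite{LiSu13,Suss18} and volume/uniqueness results for Part (1), the Collins--Sz\'ekelyhidi existence theorem \cite{CoSz15} together with the Lichnerowicz/relative K-stability obstruction of \cite{BovC16} for Part (2), and the enumeration and dimension counts of \cite{BGK05} (with the sections formula) for Part (3). The only slip is the side remark that $f_{p,q}$ admits no further degree-$d$ monomials (e.g.\ $z_2z_3$ always occurs, and sometimes mixed terms do), but since such terms are removed by coordinate changes the conclusion that each complexity-one component is a single point still stands.
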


\begin{remark}
A recent result of S\"uss says that there are no irregular SE metrics on $S^5$ \cite{Suss18}.
\end{remark}

%All of these cases have complexity two i.e. $\dim_\bbr\grk(\cald,J)=1$ \cite{BMvK15}.

%\begin{enumerate}
%\item A recent result of S\"uss \cite{Suss18} says that all complexity one contact structures of Sasaki type on $S^5$ are links of the hypersurface $z_0^p+z_1^q+z_2^2+z_3^2=0$ with $\gcd(p,q)=1$.
%There is a countable infinity of complexity one components of $\gM_{+,0}^{cl}(S^5)$ with $\dim_\bbr\gM_{+,0}^{cl}(S^5)=\dim_\bbr\gt^+(\cald,J)=2$ and nonempty $\dim_\bbr\gM^{SE}$ with $\dim_\bbr\gM^{SE}=0$ \cite{BMvK15,CoSz15}. There is also a countable number of components such that no Sasaki class in $\gt^+(\cald,J)$ admits an extremal representative \cite{BovC16}. 

%, and for all $k>1$ there are no extremal Sasaki metrics and it is not known when $k=1$ \cite{BMvK15,BovC16}.

\subsection{Moduli of toric Sasakian Structures on $S^3$ Bundles over $S^2$}\label{s2s3sect}
It is well known that there are precisely two (linear) $S^3$ bundles over $S^2$, namely the trivial bundle $S^2\times S^3$ with vanishing second Stiefel-Whitney class $w_2$, and the non-trivial bundle $S^2\tilde{\times}S^3$ with non-vanishing $w_2$.
In this case we can have $c_1(\cald)\neq 0$, but again there are no null Sasakian structures on $S^2\times S^3$ or $S^2\tilde{\times}S^3$. There are, however, negative Sasakian structures, in particular on $S^2\times S^3$ with $c_1(\cald)=0$, but it still appears to be open whether or not there are infinitely many. Here we concentrate on the positive cases 
$$\gM^{cl}_{\grg,+}(S^2\times S^3)~~ \text{and}~~\gM^{cl}_{\grg,+}(S^2\tilde{\times} S^3) .$$
It follows from contact reduction theory \cite{BG05} that all toric contact structures $\cald$ on $S^2\times S^3$ or the non-trivial bundle $S^2\tilde{\times}S^3$ are obtained by a circle action on the standard $S^7$. We refer to \cite{BoPa10} for details.
If we let $\grg$ denote the positive generator in $H^2(S^2\times S^3)$ and $H^2(S^2\tilde{\times} S^3)$, we see that $c_1(\cald)=(p_1+p_2-p_3-p_4)\grg$ where $p_i\in\bbz$. We denote these toric contact structures as $\cald_\bfp$. In sharp contrast with $S^5$ there are infinitely many toric contact structures on $S^2\times S^3$. We can also contrast this with the symplectic case \cite{KaKePi07}.
 
%Here we give details for the trivial $S^3$ bundle; similar statements can be worked out for $S^2\tilde{\times}S^3$,

The following theorem is a composite of results in \cite{BGS06,BoPa10,BoTo14a,BHL17}.

\begin{theorem}\label{torics2s3}
Let $(M,\cald_\bfp)$ be a toric contact structure on the $S^3$ bundle over $S^2$ with total space $M$. Then
\begin{enumerate}
\item there is a 4-parameter family of toric contact structures on $S^2\times S^3$ and $S^2\tilde{\times}S^3$ depending on whether $p_1+p_2-p_3-p_4$ is even or odd;
\item for each such toric contact structure $\cald_\bfp$ the set of positive Sasakian structures $\gp^+$ is a non-empty open subset in $\gt^+$, and for every $R\in\gt^+$ we have $\bfS_R>0$; 
\item for each such toric contact structure $\cald_\bfp$ the $\bfw$ subcone $\gt^+_\bfw$ is saturated by extremal Sasaki metrics; hence, the extremal subset $\ge$ is an open subset of $\gt^+$ containing $\gt^+_\bfw$. 
\end{enumerate}
\end{theorem}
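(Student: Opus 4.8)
The three parts are established by combining the description of these spaces as toric reductions of the standard sphere with the admissible construction of extremal metrics, and I outline the plan part by part. For (1) the plan is to realize every toric contact structure of Reeb type on a simply connected closed $5$-manifold as a contact reduction of $(S^7,\cald_o)$ by a circle $S^1_\bfp\subset\bbt^4$ acting with weight vector $\bfp=(p_1,p_2,p_3,p_4)$, using contact reduction \cite{BG05} and the classification of Lerman (details in \cite{BoPa10}); effectiveness and smoothness of the quotient cut out the admissible range of $\bfp$, and the moment cone of the quotient is the image of the standard octant under the projection dual to $S^1_\bfp\hookrightarrow\bbt^4$. First I would compute the topology of the quotient $M$ from $\bfp$ by a Gysin / Mayer-Vietoris argument: one obtains $H^2(M,\bbz)\cong\bbz\langle\grg\rangle$ with $c_1(\cald)=(p_1+p_2-p_3-p_4)\grg$, and since $TM\cong\cald\oplus\bbr$ one has $w_2(M)\equiv c_1(\cald)\bmod 2$; by Barden's classification of simply connected $5$-manifolds the total space is then $S^2\times S^3$ when $p_1+p_2-p_3-p_4$ is even and $S^2\tilde{\times}S^3$ when it is odd, and after the obvious normalizations the weights $\bfp$ account for the asserted $4$-parameter family.

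For (2), openness of $\gp^+$ in $\gt^+$ is immediate since positive-definiteness of the transverse Ricci form $\rho^T_R$ representing $2\pi c_1(\calf_R)$ is an open condition in $R$ and $\rho^T_R$ varies real-analytically with $R\in\gt^+$. Non-emptiness of $\gp^+$ I would obtain by exhibiting one positive Sasakian structure, for instance a quasiregular ray whose quotient orbifold is log Fano. For the stronger assertion $\bfS_R>0$ on \emph{all} of $\gt^+$: since $(M,\cald_\bfp)$ carries a torus of Reeb type, parts (1) and (2) of Theorem \ref{BHLthm} show that $\bfS_R$ is a rational function of $R$ that blows up to $+\infty$ at $\partial\gt^+$, so it suffices to exclude interior zeros. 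Using the moment (characteristic) polytope of $\cald_\bfp$ coming from (1), the Duistermaat-Heckman computation of \cite{BHL17} (see also \cite{BoTo14a}) expresses $\bfS_R$ and $\bfV_R$ as ratios of polynomials whose numerators and denominators, written in terms of the positive facet data of the polytope, are manifestly positive on the open cone; hence $\bfS_R>0$ throughout $\gt^+$. By Proposition \ref{SRpos} this global positivity is in any case necessary for $\gp^+\neq\emptyset$.

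For (3), the Sasakian structures in the $\bfw$-subcone $\gt^+_\bfw$ are, after passing to the transverse quotient, admissible K\"ahler metrics in the sense of Apostolov-Calderbank-Gauduchon-T{\o}nnesen-Friedman \cite{ApCaGa06,ACGT04,ACGT08} on $\bbc\bbp^1$-orbibundles over $N=\bbc\bbp^1$, i.e.\ on orbifold Hirzebruch surfaces. Over a one-dimensional CSC base the extremal K\"ahler equation in an admissible class reduces to a two-point boundary-value problem for a single ODE, and this ODE is solvable for \emph{every} admissible class, regular or not, by Calabi's argument \cite{Cal82} in its orbifold version; this is the content of \cite{BoTo13,BoTo14a}. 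Via the transverse-to-Sasaki correspondence of Theorem \ref{extremthm} one thereby produces an extremal Sasaki metric for every $R\in\gt^+_\bfw$, so $\gt^+_\bfw$ is saturated by extremal metrics. Finally, extremality is an open condition on the Sasaki cone (the Sasaki analogue of the LeBrun-Simanca stability theorem, \cite{BGS06}), so the extremal set $\ge$ is open in $\gt^+$ and contains $\gt^+_\bfw$.

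The step I expect to be the main obstacle is the global positivity $\bfS_R>0$ on $\gt^+$ in (2): excluding interior zeros of a rational function of several variables is not formal and rests on the special combinatorics of the moment polytope of an $S^3$-bundle over $S^2$ --- one must either produce a manifestly positive factorization or join an arbitrary $R$ by a path to a locus where positivity is evident while keeping $\bfS_R$ from vanishing along the way. A secondary difficulty is checking, in (3), that the admissibility hypotheses of the ACGT construction genuinely hold for all admissible $(l_1,l_2,\bfw)$ and that the associated ODE stays solvable uniformly over those parameters in the orbifold setting.
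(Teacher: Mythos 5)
The paper itself offers no proof of this theorem: it is presented as a composite of results from \cite{BGS06,BoPa10,BoTo14a,BHL17}, and your outline reconstructs essentially that route --- part (1) by contact reduction of $(S^7,\cald_o)$ by a weighted circle together with the computation $c_1(\cald_\bfp)=(p_1+p_2-p_3-p_4)\grg$ and the Smale--Barden classification, exactly as in \cite{BoPa10}; part (3) by passing to the transverse orbifold Hirzebruch surfaces and solving the admissible extremal ODE over a positive CSC base as in \cite{BoTo13,BoTo14a}, with openness of the extremal set in the Sasaki cone from \cite{BGS06}. So the overall plan is sound and is not a different proof from the one the survey is pointing to.

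Where your plan goes astray is in part (2), and in a way you should fix before calling it a proof. You single out the global positivity $\bfS_R>0$ on all of $\gt^+$ as the main obstacle and propose to attack it by a manifestly positive Duistermaat--Heckman factorization over the moment polytope; no such combinatorial argument is needed. Proposition \ref{SRpos}, applied to each individual $\cals\in\gt^+$ (legitimate here since $\dim\gt^+=3>1$ in the toric case), says that $\bfS_R\leq 0$ for any single $R$ would force $\gp^+=\emptyset$; hence the moment you know $\gp^+\neq\emptyset$ you get $\bfS_R>0$ for \emph{every} $R\in\gt^+$ for free. You quote this proposition only as a necessity remark and then still propose the hard direct computation, so your self-identified ``main obstacle'' dissolves. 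Conversely, the step you dispatch in one clause --- exhibiting a positive Sasakian structure in every $\cald_\bfp$ --- is where the actual content of (2) lies: you must show, for all admissible $\bfp$, that some (e.g.\ quasiregular) ray has log Fano quotient, which in the cited literature comes out of the explicit reduction/join description of $\cald_\bfp$ and its $\bfw$-subcone, cf. \cite{BoTo14a,BoTo15,BHL17}. Also note that openness of $\gp^+$ concerns positivity of the basic class $c_1(\calf_R)$, not positive definiteness of the transverse Ricci form of the chosen metric, so the ``open condition'' assertion needs the small extra argument that a positive basic representative for $R$ remains positive for nearby Reeb fields.
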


The question as to which toric contact structures are inequivalent as contact structures is in general a very difficult one. Of course, distinct values of $p_1+p_2-p_3-p_4$, up to sign, give inequivalent contact structures. In \cite{BoPa10} inequivalence of a subclass of these contact structures with the same $p_1+p_2-p_3-p_4$ was shown under the assumption that contact homology is well defined. Unfortunately, in general this is still open; however, there is a recent proof by Abreu and Macarini \cite{AbMa18} when $p_1+p_2-p_3-p_4=0$ which we briefly discuss later.

When $c_1(\cald_\bfp)\neq 0$ the preferred metrics are those of constant scalar curvature (CSC). These come in rays in $\gt^+$. The toric case was extensively studied by Legendre \cite{Leg09,Leg10} where it is proven that there are at most a finite number of CSC rays in the Sasaki cone $\gt^+$ of any $S^3$ bundle over $S^2$. Furthermore, she gives explicit examples of more than one CSC ray. Summarizing 

\begin{theorem}[Legendre]\label{Legthm}
A co-oriented toric contact manifold admits at least one ray with vanishing Sasaki-Futaki invariant. Moreover, for a co-oriented toric contact structure on an $S^3$ bundle over $S^2$ admits at least one CSC ray and at most 7 CSC rays.
\end{theorem}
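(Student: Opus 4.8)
The plan is to reduce the statement to a problem in the combinatorics of the moment cone of a toric contact manifold. Recall that a co-oriented toric contact structure of Reeb type on a $(2n+1)$-dimensional manifold corresponds, via Lerman's classification, to a good rational polyhedral cone $C^*\subset\gt^*$; the Reeb vector fields in the Sasaki cone $\gt^+$ are exactly the $R\in\gt$ whose image under the dual pairing lies in the interior of the dual cone $C$, and the characteristic hyperplane $\{R=1\}$ cuts out a polytope $\Delta_R$. By the work of Martelli--Sparks--Yau and the toric formulas reviewed in Section \ref{affcone} (see also Equation \eqref{VS}), the total volume $\bfV_R$ and total transverse scalar curvature $\bfS_R$ are, up to dimensional constants, the Euclidean volume of $\Delta_R$ and a weighted sum of facet volumes of $\Delta_R$ — and in particular both are rational functions of $R$ by Theorem \ref{BHLthm}(1). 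The Sasaki--Futaki invariant $\bfF_R$ vanishes on a ray $\gr$ iff $R$ is a critical point of $\bfH_1$ restricted to the cone, by Lemma \ref{lemmaCRIT}; equivalently iff the barycenter of $\Delta_R$ with respect to a suitable measure coincides with a fixed point determined by the Futaki--Mabuchi field. So ``CSC ray'' becomes ``critical ray of $\bfH_1$''.

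First I would establish existence: by Theorem \ref{BHLthm}(2), on any toric contact structure of Reeb type one has $\bfH_1(R)\to+\infty$ as $R$ approaches $\partial\gt^+$ away from the origin, while $\bfH_1$ is homogeneous of degree $0$; hence $\bfH_1$ descends to a proper, bounded-below function on the (contractible) space of rays and attains a minimum in the interior, giving at least one critical ray, i.e. at least one ray with $\bfF_R=0$. This is the first sentence of the theorem and holds in all dimensions. Next, for the upper bound in the $S^3$-bundle-over-$S^2$ case ($n=2$, so $\dim\gt^+=3$, and after projectivizing a $2$-dimensional problem on the section polygon $P_R\subset\bbr^2$): one writes out $\bfV_R$ and $\bfS_R$ explicitly as rational functions of the two essential ray-parameters using the combinatorial data of the $4$-parameter family $\cald_\bfp$ from Theorem \ref{torics2s3}(1). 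The critical-point equation $\bfF_R=0$ — equivalently $\nabla(\bfS_R^{3}/\bfV_R^2)=0$ on the $2$-dimensional ray space — clears denominators to a polynomial system whose Bézout-type bound, after exploiting the homogeneity and the specific degrees coming from a quadrilateral section polygon, is $7$. Counting real solutions inside the (triangular) region of rays, using the boundary blow-up of $\bfH_1$ to rule out spurious critical points at infinity, yields ``at most $7$ CSC rays''.

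The main obstacle, and the step that requires genuine care rather than bookkeeping, is the sharp count of $7$: one must show the polynomial critical system for the quadrilateral section polygon has at most $7$ solutions in the relevant chamber and that this bound is not improved by the ``good cone'' constraints on $\cald_\bfp$. I expect this to go by a careful analysis of the degrees of the numerator and denominator of $\bfS_R$ and $\bfV_R$ as functions of the two slopes/intercepts parameterizing the rays — $\bfV_R$ is a sum over the four triangles in a fan triangulation of $P_R$, each a ratio of a quadratic over a product of linear forms in $R$, and $\bfS_R$ is a similar sum over the four edges — so that $d\bfH_1=0$ becomes a resultant whose degree one bounds by $7$; checking that all $7$ can occur (and hence the bound is attained, as Legendre's explicit examples show for ``more than one CSC ray'') fixes the constant. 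A secondary subtlety is verifying that the Futaki--Mabuchi correction is irrelevant here because on these toric manifolds the transverse Futaki--Mabuchi vector field contributes only to the extremal (not CSC) equation, so that the vanishing locus of $\bfF_R$ genuinely is the CSC locus; this uses that $\bfF_R$ itself (not its relative version) is the obstruction to cscS, as recalled after Equation \eqref{SFinv}.
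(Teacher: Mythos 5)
There is no proof to compare against inside the survey itself: Theorem \ref{Legthm} is quoted from Legendre's papers \cite{Leg09,Leg10}, so your proposal has to be measured against her argument, and there it has a genuine gap. You treat ``CSC ray'', ``critical ray of $\bfH_1$'' and ``ray with vanishing Sasaki--Futaki invariant'' as interchangeable, but they are not: vanishing of $\bfF_R$ (equivalently, criticality of $\bfH_1$ when $\bfS_R\neq 0$, Lemma \ref{lemmaCRIT}) is necessary for constant scalar curvature but not sufficient --- the Remark immediately following the theorem in this survey stresses that it implies CSC only when the ray is already known to be extremal. Consequently your properness/minimization argument (via Theorem \ref{BHLthm}) can only deliver the first sentence of the theorem, and even there you must rule out that the minimizing ray is a zero of $\bfS_R$ rather than of $\bfF_R$, since the critical set of $\bfH_1$ is the union of the two loci (on $S^3$-bundles over $S^2$ this is harmless because $\bfS_R>0$ by Theorem \ref{torics2s3}, but not in the generality of the first sentence); one also needs the standard toric argument that vanishing of $\bfF_R(\Phi(a))$ for $a\in\gt$ forces vanishing of the full character. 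The assertion ``at least one CSC ray'' requires the additional input that \emph{every} ray in the Sasaki cone of these toric $5$-manifolds carries an extremal Sasaki metric; in Legendre's work this comes from her explicit solution of Abreu's equation on labeled convex quadrilaterals \cite{Leg09}, and it is exactly this extremality that upgrades the vanishing-Futaki ray produced by the variational argument to a CSC ray. Your closing remark about the Futaki--Mabuchi field being ``irrelevant'' does not repair this: the relative correction concerns extremality versus CSC, and its absence is not what makes vanishing $\bfF_R$ equivalent to CSC.

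The bound of $7$ is also not established. Logically an upper bound on critical rays of $\bfH_1$ does bound CSC rays, so the direction of your reduction is fine, but the Bézout/resultant count is only announced (``I expect this to go by a careful analysis\dots''), and it is precisely the hard, example-specific content: a naive degree count on the cleared-denominator gradient system of $\bfS_R^3/\bfV_R^2$ for a quadrilateral section polygon does not visibly produce $7$, and in Legendre's approach the bound comes instead from the explicit polynomial condition that her extremal affine function on the quadrilateral be constant, together with the good-cone constraints. Until that degree analysis is actually carried out (and spurious solutions outside the Sasaki cone, at the boundary, and with multiplicity are controlled), the second sentence of the theorem remains unproved in your proposal.
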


\begin{remarks}
\begin{enumerate}
\item Recall that generally the vanishing of the Sasaki-Futaki invariant does not imply CSC. It does imply CSC if a priori the ray is extremal.
\item There are many examples of 2 inequivalent CSC rays.
\item It should be possible to lower the bound from 7.
\end{enumerate}
\end{remarks}

We now turn to the monotone case when $c_1(\cald_\bfp)=0$. In this case we have the trivial bundle $S^2\times S^3$. Clearly, the 4-parameter family reduces to a 3-parameter family of toric contact structures on $S^2\times S^3$. Furthermore, it was shown in \cite{CLPP05,MaSp05b} that for every such toric $(\cald_\bfp,J)$ satisfying $p_1+p_2-p_3-p_4=0$ there is a unique SE metric. We have a much better understanding of the moduli spaces for a subclass of the toric contact structures, denoted by $Y^{p,q}$ where $\gcd(p,q)=1$ and $1\leq q<p$, of these SE metrics which were discovered earlier \cite{GMSW04a} and are more amenable to further study. In \cite{BoTo14a} it was shown that the $Y^{p,q}$ can be obtained as an $S^3_\bfw$ join with the standard $S^3$, and that the SE metric lies in the $\gt^+_\bfw$ subcone of $\gt^+$. Writing $Y^{p,q}=S^3\star_{l_1,l_2}S^3_\bfw$ we see that 
$$l_1=\gcd(p+q,p-q),~l_2=p,~ \bfw=\frac{1}{\gcd(p+q,p-q)}(p+q,p-q).$$
In \cite{BoPa10} it was shown that although $Y^{p,q}$ and $Y^{p,q'}$ are inequivalent as toric contact structures, they are equivalent as $T^2$ equivariant contact structures, see also the survey \cite{Boy11} for more details specific to $Y^{p,q}$. The fact that $Y^{p,q}$ and $Y^{p,q'}$ are $T^2$ equivariantly equivalent, but not $T^3$ equivariantly equivalent is related to the fact that their maximal tori belong to different conjugacy classes in the contactomorphism group $\gC\go\gn(M,\cald)$.

In \cite{AbMa10} as well as in \cite{BoPa10} it was shown that $Y^{p,q}$ and $Y^{p',q'}$ are contact inequivalent if $p'\neq p$ whenever the contact homology is well defined. Unfortunately, the well definedness of contact homology is still generally unknown; however, fortunately, as mentioned previously, Abreu and Macarini \cite{AbMa18} have recently given a proof implying that $Y^{p,q}$ and $Y^{p,q'}$ are contact inequivalent when $p'\neq p$ using a certain crepant toric symplectic filling. In particular, they show that such fillings exist in dimension five, and that its equivariant symplectic homology is a contact invariant which essentially coincides with linearized contact homology. In the following $\phi$ denotes the Euler phi-function.

\begin{theorem}\label{ypqbouq}
The contact structures $Y^{p,q}$ and $Y^{p',q'}$ on $S^2\times S^3$ are inequivalent if and only if $p'\neq p$. Furthermore, the isotopy class of the contact structure defined by $Y^{p,1}$ admits a $\phi(p)$-bouquet $\gB_{\phi(p)}(Y^{p,q})$ such that each of the $\phi(p)$ Sasaki cones admits a unique Sasaki-Einstein metric, up to transverse biholomorphism. Moreover, these Einstein metrics are non-isometric as Riemannian metrics.
\end{theorem}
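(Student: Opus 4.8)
The plan is to combine a contact-topological input (for the equivalence classification), an equivariant bookkeeping of maximal tori (to count the bouquet cones), and the $\bbq$-Gorenstein theory of Sasaki--Einstein metrics (to populate the cones and separate the metrics); I take these in that order.

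\emph{Contact classification.} Fix $p>1$ and write, as in Section~\ref{joinsect}, $Y^{p,q}=S^3\star_{l_1,l_2}S^3_{\bfw(q)}$ with $l_2=p$, $l_1=\gcd(p+q,p-q)$ and $\bfw(q)=\frac{1}{\gcd(p+q,p-q)}(p+q,p-q)$, so that the only datum depending on $q$ is the weight vector $\bfw(q)$, hence the induced transverse complex structure $\bar{J}_q$. If $p=p'$ I would use \cite{BoPa10}: $Y^{p,q}$ and $Y^{p,q'}$ are $T^2$-equivariantly contactomorphic, hence contactomorphic, and pulling $\bar{J}_q$ back through such a contactomorphism realizes all the $\bar{J}_q$ as transverse complex structures on one contact manifold, whose contact structure one checks lies in the isotopy class $\bar{\cald}_p$ of the contact structure of $Y^{p,1}$. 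If $p\neq p'$ I would invoke Abreu--Macarini \cite{AbMa18}: in dimension five $Y^{p,q}$ carries a crepant toric symplectic filling whose $S^1$-equivariant symplectic homology is a well-defined contact invariant essentially coinciding with linearized contact homology, and whose value (consistent with the earlier, conditional contact-homology computations of \cite{AbMa10,BoPa10}) depends on $p$ but not on $q$ and separates distinct $p$. Together these give ``$Y^{p,q}$ and $Y^{p',q'}$ are contactomorphic $\iff p=p'$'', the first assertion.

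\emph{The $\phi(p)$-bouquet.} The admissible parameters are the $q$ with $1\le q<p$ and $\gcd(p,q)=1$, and there are exactly $\phi(p)$ of them. By the previous step each $\bar{J}_q$ is a transverse complex structure compatible with the fixed isotopy class $\bar{\cald}_p$, so it carries a Sasaki cone $\gt^+(\cald,\bar{J}_q)$, three-dimensional since $Y^{p,q}$ is toric; the structures of interest, including the Sasaki--Einstein one, lie in its two-dimensional $\bfw(q)$-subcone $\gt^+_{\bfw(q)}$. Since $Y^{p,q}$ is toric, the unique conjugacy class of maximal tori in $\gC\gr(\cald,\bar{J}_q)$ is represented by a torus $T^3$; by \cite{BoPa10} these $T^3$'s are, for $q\neq q'$, conjugate as $T^2$-equivariant data but \emph{not} conjugate in $\gC\go\gn(\cald)$ (the $Y^{p,q}$ are pairwise non-$T^3$-equivariantly contactomorphic). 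Hence the $\bar{J}_q$ fall into $\phi(p)$ pairwise distinct $T$-equivalence classes in the sense of Section~\ref{Sasconesect}, so $\gB_{\phi(p)}(\bar{\cald}_p)=\bigcup_{\,1\le q<p,\ \gcd(p,q)=1\,}\gt^+(\cald,\bar{J}_q)$ is a genuine $\phi(p)$-bouquet on the isotopy class of the contact structure of $Y^{p,1}$.

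\emph{Sasaki--Einstein metrics and non-isometry.} Each cone contains the Sasaki--Einstein metric of $Y^{p,q}$ of \cite{GMSW04a,CLPP05,MaSp05b}, which by \cite{BoTo14a} lies in $\gt^+_{\bfw(q)}$. Since $c_1(\cald)=0$ this is the $\bbq$-Gorenstein situation, so the Reeb field of a Sasaki--Einstein structure is the unique minimizer of the volume functional on the cone, hence is determined; fixing $(\cald,\bar{J}_q)$ and this Reeb field determines the Sasaki class, and by Nitta--Sekiya \cite{NiSe12} the Sasaki--Einstein metric in a fixed Sasaki class is unique up to the connected transverse biholomorphism group $\gA\gu\gt(\bar{J}_q)_0$. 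Thus each of the $\phi(p)$ cones carries a Sasaki--Einstein metric, unique up to transverse biholomorphism. All of them satisfy $\Ric_g=2ng$, so they cannot be told apart by Einstein constant or scalar curvature; instead I separate them by the total volume $\bfV_{R_q}$ of the Sasaki--Einstein metric of $Y^{p,q}$, whose explicit value computed in \cite{GMSW04a,MaSpYau06} has the form $\bfV_{R_q}=c(p)\,g(q/p)$ for a concrete function $g$ injective on $(0,1)$, so that for fixed $p$ the $\phi(p)$ volumes are pairwise distinct. Distinct volumes force pairwise non-isometric Riemannian metrics, the last assertion.

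\emph{Where the difficulty lies.} Since this is a survey, the real content is imported, and two imports carry the weight. The first is the contact inequivalence for $p\neq p'$ \emph{without} the (still open) transversality hypothesis for contact homology: this is precisely the crepant toric symplectic filling argument of \cite{AbMa18}, which replaces the conditional statements of \cite{AbMa10,BoPa10}. The second is the $T^2$- versus $T^3$-equivariant dichotomy for the $Y^{p,q}$ of \cite{BoPa10}: this is what pins the bouquet cardinality to exactly $\phi(p)$, and it is the one place where a genuine equivariant computation, rather than formal bookkeeping, is required; the separation by volume in the last step, while elementary given the formula, also rests on the explicit computation of \cite{GMSW04a,MaSpYau06}.
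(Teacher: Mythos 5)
Your proposal is correct and follows essentially the same route the paper takes: contact classification via Abreu--Macarini \cite{AbMa18} for $p\neq p'$ and the $T^2$-equivariant equivalence of \cite{BoPa10} for $p=p'$, the $\phi(p)$ count from the non-conjugate maximal tori established in \cite{BoPa10}, existence and uniqueness of the Sasaki--Einstein metrics from \cite{GMSW04a,CLPP05,MaSp05b,BoTo14a,NiSe12}, and non-isometry by volume comparison. The one step you add that the survey leaves implicit is the volume-separation argument; there you should verify (or cite explicitly) that the volume function $g(q/p)$ from \cite{GMSW04a,MaSpYau06} is injective on the relevant values, which is a routine monotonicity check but is asserted rather than proved in your write-up.
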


We view the bouquet $\gB_{\phi(p)}(Y^{p,q})$ as representing a component of the moduli space whose underlying toric contact structure is $\cald_p$.  

\subsection{Complexity 1 Sasakian Structures on Connected Sums of $S^2\times S^3$}
Next we consider contact structures on the connected sums $M=k(S^2\times S^3)$ with complexity 1, that is there is an effective action of a 2-dimensional torus on $(M,\cald)$, but no effective $\bbt^3$ action. The analysis is similar to the case of $S^5$ treated above, but without a complete classification for complexity 1. These are given on the link $L_{p,q,k}$ of the Brieskorn-Pham polynomial $z_0^p+z_1^q+z_2^2+z_3^2$ with $\gcd(p,q)=k$ and $p,q\geq 2$ where both\footnote{When $p=q=2$ we have the link of a quadric which is a homogeneous space, and hence, toric.} $p,q$ are not both $2$. The link $L_{p,q,k}$ is diffeomorphic to the $(k-1)$-fold connected sum $(k-1)(S^2\times S^3)$ where $k=1$ means $S^5$. The main result is that of Collins and Sz\'ekelyhidi \cite{CoSz15} which says that $L_{p,q,k}$ admits an SE metric if and only if $2p>q$ and $2q>p$. In \cite{BovC16} it was shown that if either of these inequalities is not satisfied, then the entire Sasaki cone can admit no extremal Sasaki metric, and in \cite{BMvK15} it was shown that in either case there are infinitely many components in the corresponding moduli space. We have 

\begin{theorem}\label{compl1s2s3thm}
The moduli space $\gM^{cl}_{0,+,1}(k(S^2\times S^3))$ of positive complexity 1 Sasakian classes on $k(S^2\times S^3)$ with $c_1(\cald_{p,q,k})=0$ satisfies for each $k\geq 1$
\begin{enumerate}
\item  $\gM^{cl}_{0,+,1}(k(S^2\times S^3))$ has an infinite number of components;
\item there are an infinite number of components that do not intersect a toric contact structure;
\item there is a Reeb vector field in the Sasaki cone of the link $L_{p,q,k}$ that gives an SE metric if and only if $2p>q$ and $2q>p$;
\item if either of these inequalities is violated the entire Sasaki cone admits no extremal Sasaki metric.
\end{enumerate}
\end{theorem}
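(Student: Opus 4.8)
The statement is a composite of results proved elsewhere, so the plan is to assemble them in the right logical order and supply the bookkeeping that attaches each of them to the family of links $L_{p,q,k}$.

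I would begin with the structure underlying parts (3) and (4). The Brieskorn--Pham polynomial $z_0^p+z_1^q+z_2^2+z_3^2$ has the form \eqref{Feqn} with $f_1(z_0,z_1)=z_0^p+z_1^q$ and two quadratic monomials, so by \eqref{Fsascone} its automorphism group contains $U(1)\times SO(2)$ and the Sasaki cone $\gt^+$ of $L_{p,q,k}$ is $2$-dimensional — the complexity-$1$ case. The affine cone $Y(\bfa)$ is $\bbq$-Gorenstein with $c_1(\cald)=0$, so by \cite{MaSpYau06} the functional $\bfH_1$ is proportional to the volume $\bfV_R$ (the leading coefficient $a_0(R)$ of the index character \eqref{indexchar}), which is convex; its unique minimizing ray $R_0$ is the only possible Sasaki--Einstein Reeb vector field in $\gt^+$. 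Part (3) is then exactly the theorem of Collins and Sz\'ekelyhidi \cite{CoSz15}: $(Y(\bfa),R_0)$ is K-polystable — hence carries an SE metric by the $\bbq$-Gorenstein case of the Sasaki Yau--Tian--Donaldson correspondence proved there — if and only if $2p>q$ and $2q>p$; when one of these fails, a destabilizing test configuration built from the coordinate $z_0$ (equivalently the Lichnerowicz bound of \cite{GMSY06}) violates the Futaki inequality of Definition \ref{stabdef} for $R_0$, so no $R\in\gt^+$ gives an SE metric. Part (4) is the last entry of Table \ref{sasconetable}: by Theorem \ref{Bovthm} an extremal Sasakian structure forces $(Y,R)$ to be K-semistable relative to a maximal torus, and, as isolated in \cite{BovC16} and in the discussion following Theorem \ref{Bovthm}, the configuration built from $z_0$ relatively destabilizes $(Y,R)$ for \emph{every} $R\in\gt^+$ — the $SO(2)$-factor of the maximal torus acts trivially on $z_0$, so only the $R_\bfw$-component of $R$ enters the weight estimate — whence if $2p\le q$ or $2q\le p$, no $R\in\gt^+$ is extremal.

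For parts (1) and (2) I would pass to the contact invariants of Section 4. Each $L_{p,q,k}$ bounds a Stein filling $W$, the smoothing $f^{-1}(1)$ of $Y(\bfa)$; since $\sum_i w_i>d$ its quotient orbifold is log Fano, so the mean Euler characteristic $\chi_m(W)$ of \eqref{eq:def_mec} is a contact invariant by the corollary following Proposition 4.21 of \cite{BMvK15}, and is an explicit rational function of $p$ and $q$ (the $\gcd(p,q)=k$ analogue of the $S^5$ value $\tfrac{pq+1}{2(p+q)}$). As $(p,q)$ runs over the infinitely many admissible pairs with $\gcd(p,q)=k$ realizing the given diffeomorphism type, $\chi_m(W)$ takes infinitely many values; since $\chi_m(W)$ is a contact invariant it is constant on a component of $\gM^{cl}$, so there are infinitely many components — part (1). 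For part (2) I would observe that a component meets the toric locus only if the link's contact structure is contactomorphic to a toric contact structure on $k(S^2\times S^3)$, then compare $\chi_m(W)$ — and, where it fails to separate, the graded groups $SH^{S^1,+}(W)$, whose deformation invariance is Theorem \ref{KvKthem} — against the invariants of the toric contact structures computed in \cite{BMvK15,BoPa10}. Only finitely many $L_{p,q,k}$ can match a toric model, leaving infinitely many components disjoint from the toric locus, exactly as in the $S^5$ argument of Theorem \ref{s5thm}.

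The main obstacle is part (2): since there are infinitely many toric contact structures on $k(S^2\times S^3)$, one is comparing two infinite families rather than excluding a single standard model, so I would need an explicit formula for the toric $\chi_m$ (and the structure of $SH^{S^1,+}$) together with a Diophantine argument — in the spirit of the equation $31(pq+1-p-q)=-30k(pq+1-9p-9q)$ in the proof of Theorem \ref{s5thm} — showing that the matching locus is finite. A secondary point to watch is the index bookkeeping between $\gcd(p,q)=k$ and the connected-sum count (Table \ref{sasconetable} uses $m=\gcd(p,q)-1$), together with checking that the orbifolds in play really are log Fano so that the invariance hypothesis of \cite{BMvK15} applies.
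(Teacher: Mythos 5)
Your handling of (1), (3) and (4) is essentially the paper's: (3) and (4) are quoted from \cite{CoSz15} and \cite{BovC16} respectively, and (1) rests on the mean Euler characteristic computed in \cite{BMvK15}, which in this case is the explicit formula $\chi_m(L_{p,q,k})=\frac{pq+k^2}{2(p+q)}$ (the analogue you anticipated); since this takes infinitely many values on admissible pairs with fixed $\gcd(p,q)$, and $\chi_m$ is a contact invariant here, (1) follows.

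The genuine gap is part (2), which you yourself flag as the main obstacle and leave as a plan rather than an argument. Your proposed route --- compare $\chi_m$ and, where needed, $SH^{S^1,+}$ of each link against each member of the infinite family of toric contact structures, and settle the comparison by a Diophantine analysis showing the matching locus is finite --- is never carried out, and it is not how the paper proceeds. The paper uses a single structural input due to Abreu and Macarini \cite{AbMa18}: the mean Euler characteristic of any Gorenstein toric contact structure on $k(S^2\times S^3)$ is one half of an integer. Since $\chi_m(L_{p,q,k})=\frac{pq+k^2}{2(p+q)}$ fails to be a half-integer for infinitely many admissible $(p,q)$ (one only needs $p+q\nmid pq+k^2$ infinitely often), those components cannot meet any toric contact structure, and (2) follows immediately --- with no case-by-case comparison of two infinite families, no computation of the toric values of $\chi_m$, and no need for your stronger (and unproved) claim that only finitely many links can match a toric model. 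Without this half-integrality result, or some substitute for it, your outline of (2) does not close.
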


\begin{proof}
The mean Euler characteristic for $L_{p,q,k}$ was computed in \cite{BMvK15} to be 
\begin{equation}\label{mEulerpqk}
\chi_m(L_{p,q,k})=\frac{pq+k^2}{2(p+q)}
\end{equation}
which proves (1). To prove (2) we note that Abreu and Macarini \cite{AbMa18} prove that the mean Euler characteristic of a Gorenstein toric contact structure on $k(S^2\times S^3)$ is one half an integer which comparing with Equation \eqref{mEulerpqk} proves (2). As mentioned above item (3) is a result in \cite{CoSz15} and item (4) a result in \cite{BovC16}.
\end{proof}

In \cite{Suss18} S\"uss uses the theory of polyhedral divisors on the affine variety $Y$ to prove the existence of moduli of irregular SE metrics on the connected sums $(2k+1)(S^2\times S^3)$ for $k>1$. In this case there is a continuous family of $T$-equivalent Sasaki cones belonging to inequivalent CR structures.

Finally, we refer to \cite{BG05} and references therein for the existence of complexity 2 Sasakian structures as well as the moduli of SE metrics on the connected sums $k(S^2\times S^3)$. Of course, complexity 2 Sasaki CR structures have a 1-dimensional Sasaki cone and are necessarily quasiregular.

\subsection{Moduli of Sasakian Structures on $S^3$ Bundles over Riemann Surfaces of genus $g>0$}\label{Riesurfsect}
It is well known that topologically there are precisely two $S^3$ over a Riemann surface $\grS_g$ with structure group $O(4)$. They are the trivial bundle $\grS_g\times S^3$ and the non-trivial bundle $\grS_g\tilde{\times} S^3$, and they are distinguished by their second Stiefel-Whitney class. In each case we fix an orientation. In this section I give a brief discussion, without proofs, of the results found in \cite{BoTo11,BoTo13,BHLT16}. We should note that more complete results concerning the contact equivalence problem have been obtained only in the case of the trivial bundle. For it is only in that case that we show can, by using equivariant Gromov-Witten invariants  \cite{Bus10}, that there are complex structures $\{J_m\}$ with $m=0,\cdots,k-1$ on $\cald_k$ whose image under $\gQ$ are different, that is, they map to nonconjugate maximal tori in $\gC\go\gn(\Sigma_g\times S^3,\cald_k)$. Thus, although $(\cald_k,J_m)$ and $(\cald_k,J_{m'})$ are $S^1$ equivariantly equivalent, they are $\bbt^2$ equivariantly inequivalent. The contact inequivalence is determined by the classical invariant $c_1(\cald_k)$ for both the trivial and non-trival $S^3$ bundles, which is $2-2g-2k$ for $\Sigma_g\times S^3$ and $2-2g-2k-1$ for $\grS_g\tilde{\times} S^3$.
In what follows we consider $\bbt^2$ equivariant contact structures $\cald_k$ on $S^3$ bundles over $\grS_g$ and for simplicity we consider $g\geq 2$. On each $\cald_k$ we have a family of complex structures $\{J_{\grt,\grr,m}\}$ where $\grt\in\gM_g$ the moduli space of complex structures on $\grS_g$, $\grr\in{\rm Pic}^0(\grS_g)\approx T^{2g}$ the Jacobian torus, and $m=0,\cdots,k-1$ coming from reducible representation of $\pi_1(M)$ which give 2-dimensional Sasaki cones $\grk(\cald_k,J_{\grt,\grr,m})$. We also have complex structures coming from the irreducible representations which are parameterized by the smooth part $\calr^{irr}(\grS_g)$ of the character variety $\calr(\grS_g)$ which by Narasimhan and Seshadri \cite{NaSe65} correspond to stable rank 2 holomorphic vector bundles on $\grS_g$. These give 1-dimensional Sasaki cones $\grk(\cald_k,J_{\grt,\grr',0})$. However, it is only in the case of $\Sigma_g\times S^3$ that we have proved the existence of bouquets.
Note that the Sasaki cones are 1-dimensional for irreducible representations and 2-dimensional for reducible representation. These spaces could be non-Hausdorff owing to jumping phenomenon. A similar bouquet can be given in the genus one case. Summarizing we have 

\begin{theorem}\label{sasbouqthm}
The manifolds $\Sigma_g\times S^3$ and $\grS_g\tilde{\times} S^3$ admit a countably infinite number of contact inequivalent $\bbt^2$ equivariant contact structures $\{\cald_k\}_{k=1}^\infty$ of Sasaki type. Moreover, every $\bbt^2$ equivariant contact structure on $\Sigma_g\times S^3$ and $\grS_g\tilde{\times} S^3$ is contactomorphic to one of the $\cald_k$s. The contact structure $\cald_k$ also has a 1-dimensional Sasaki cone $\grk(\cald_k,J_{\grt,\grr',0})$ for each $\grt\in\gM_g$ and each irreducible representation $\grr'$ of $\pi_1(M)$. When $M=\Sigma_g\times S^3$ we can write the moduli space\footnote{This was incorrectly stated as the bouquet in \cite{BoTo14a,BHLT16}. The bouquet is just $\gB_k(\cald_k)=\bigcup_{m=0}^{k-1}\grk(\cald_k,J_{\grt,\grr,m})$ for each $\grt$ and $\grr$.} $\gM^{cl}_{\grg'}(\cald_k)$, where $c_1(\cald_k)=\grg'=(2-2g-2k)\grg$, in terms of Sasaki bouquets as
\begin{equation*}\label{Modsigmag}
\gM^{cl}_{\grg'}(\cald_k)=\bigcup_{m=0}^{k-1}\bigcup_{\grt\in\gM_g,\grr\in T^{2g}}\grk(\cald_k,J_{\grt,\grr,m})\bigcup_{\grt\in\gM_g,\grr'\in \calr^{irr}(\grS_g)}\grk(\cald_k,J_{\grt,\grr',0}).
\end{equation*}
\end{theorem}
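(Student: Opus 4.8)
The plan is to assemble the statement from the $S^3_\bfw$ join construction of Section~\ref{joinsect}, the deformation theory of the transverse complex structure from Section~\ref{Sasconesect}, and the classical moduli theory of ruled surfaces over a curve, following \cite{BoTo11,BoTo13,BHLT16}.

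First I would construct the contact structures $\cald_k$. Fix on $\grS_g$ a complex structure and a constant scalar curvature K\"ahler form $\gro_g$ (available for $g\geq 2$ as the hyperbolic form, of negative scalar curvature), and let $N_k\to\grS_g$ be the circle bundle whose Euler class is the integral multiple of $[\gro_g]$ indexed by $k$; by Theorem~\ref{fundthm2} its total space carries a regular Sasakian structure of negative type. Form the Sasaki join $M_k:=N_k\star_{1,1}S^3$ as in the diagram~\eqref{s2comdia}; the quotient under $\pi_L$ is the ruled surface $\bbp(\calo\oplus L_k)\to\grS_g$, so $M_k$ is the total space of an $S^3$-bundle over $\grS_g$, and a parity count in the $\bbc\bbp^1$-fiber determines which of $\grS_g\times S^3$, $\grS_g\tilde{\times}S^3$ is obtained. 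Since a join of $\bbt$-equivariant Sasakian structures is $\bbt^2$-equivariant, the induced contact structure $\cald_k=\ker\eta_k$ is of Sasaki type and $\bbt^2$-equivariant, and carries the $2$-dimensional $\bfw$-subcone $\gt^+_\bfw$. A Gysin/Mayer--Vietoris computation of the cohomology ring of the join --- possible in principle as noted after \eqref{t+add} --- yields $c_1(\cald_k)=(2-2g-2k)\grg$ on $\grS_g\times S^3$ and $(2-2g-2k-1)\grg$ on $\grS_g\tilde{\times}S^3$.

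Next, contact inequivalence is immediate: $c_1(\cald_k)$ is a contact invariant taking pairwise distinct values as $k$ ranges over the positive integers, so by Proposition~\ref{c1inv} the $\{\cald_k\}_{k\geq 1}$ are pairwise inequivalent, which gives the countably infinite family. The converse --- that every $\bbt^2$-equivariant contact structure of Sasaki type on $\grS_g\times S^3$ or $\grS_g\tilde{\times}S^3$ is contactomorphic to some $\cald_k$ --- requires the classification of complexity-one Sasaki-type contact structures on these bundles: after a transverse deformation (harmless by Gray's theorem \cite{Gra59}) such a structure is quasiregular with quotient a ruled surface over $\grS_g$ carrying a Hamiltonian $S^1$-action, hence of admissible type, and the Boyer--T\o nnesen-Friedman analysis of admissible join data over a curve, combined with the constraint on $c_1$, forces it to arise as one of the $M_k$. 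This classificatory input is where I expect the main difficulty to lie; it is the one genuinely nontrivial step, and it is why the finer, fully equivariant contact-equivalence statement is available only for the trivial bundle, where equivariant Gromov--Witten invariants \cite{Bus10} supply the extra data distinguishing the maximal tori $\gQ(J_{\grt,\grr,m})$.

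Finally I would analyze the Sasaki cones and assemble the moduli description. For fixed $\cald_k$, deformations of the transverse complex structure are governed (Section~\ref{Sasconesect}) by the Kodaira--Spencer theory of the quotient ruled surface, i.e.\ by the moduli of holomorphic rank-$2$ bundles over $\grS_g$ up to twist. The decomposable bundles, after normalization, are parameterized by a choice of complex structure $\grt\in\gM_g$, a point of the Jacobian $\Pic^0(\grS_g)\approx T^{2g}$, and a discrete index $m\in\{0,\dots,k-1\}$ recording the admissible quotient compatible with the given $S^1$-orbibundle; each such carries an extra fiberwise circle action, hence a $2$-dimensional Sasaki cone $\grk(\cald_k,J_{\grt,\grr,m})$. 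The stable indecomposable bundles are parameterized, via Narasimhan--Seshadri \cite{NaSe65}, by the smooth locus $\calr^{irr}(\grS_g)$ of the character variety $\calr(\grS_g)$, and for these the group $\gA\gu\gt(\cals)$ is just the Reeb circle, giving a $1$-dimensional Sasaki cone $\grk(\cald_k,J_{\grt,\grr',0})$. For each $\grt$ the cones $\{\grk(\cald_k,J_{\grt,\grr,m})\}_{m=0}^{k-1}$ form the bouquet $\gB_k(\cald_k)$, and running over all $\grt$, $\grr$, and $\grr'$ produces exactly the stated expression for $\gM^{cl}_{\grg'}(\cald_k)$ when $M=\grS_g\times S^3$; jumping of the holomorphic bundle along families accounts for the possible non-Hausdorff behaviour. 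The genus-one case is handled identically, with $\gM_1$, $\Pic^0$, and the flat-bundle moduli in the role of their higher-genus analogues.
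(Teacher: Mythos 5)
The paper itself gives no proof of this theorem: Section \ref{Riesurfsect} states the results explicitly ``without proofs,'' deferring to \cite{BoTo11,BoTo13,BHLT16}, so your sketch can only be measured against the strategy of those works. In outline you do follow it (Boothby--Wang/join constructions, $c_1(\cald_k)$ and Proposition \ref{c1inv} for contact inequivalence, reducible representations and the Jacobian for the $2$-dimensional cones, Narasimhan--Seshadri \cite{NaSe65} for the $1$-dimensional ones, equivariant Gromov--Witten invariants \cite{Bus10} to separate tori), but there are concrete gaps. The first is in your construction: the join $N_k\star_{1,1}S^3$ with $\bfw=(1,1)$ is the unit sphere bundle of $L_k\oplus L_k$, whose first Chern class is even, so its $w_2$ vanishes and it only ever produces $\grS_g\times S^3$; the claimed structures on $\grS_g\tilde{\times}S^3$ are not constructed, and ``a parity count in the fiber'' is not a construction --- one must vary the weights or join parameters, or work directly with Boothby--Wang over the appropriate log ruled surfaces. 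Relatedly, your identification of ``the quotient under $\pi_L$'' with $\bbp(\calo\oplus L_k)$ contradicts diagram \eqref{s2comdia}: $\pi_L$ maps onto the join itself, and the natural quasi-regular quotient of $N_k\star_{1,1}S^3$ is the product $\grS_g\times\bbc\bbp^1$; the nontrivial ruled surfaces appear only as quotients by other quasi-regular rays in $\gt^+_\bfw$.

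The second gap concerns the two classification claims. You flag honestly that ``every $\bbt^2$-equivariant contact structure is contactomorphic to some $\cald_k$'' is the hard step, but you then also pass over the completeness of the displayed union for $\gM^{cl}_{\grg'}(\cald_k)$ with the word ``exactly'': one must show that every Sasaki class compatible with $\cald_k$ arises from a quasi-regular structure whose quotient is a (log) ruled surface coming from a projectively flat, i.e.\ polystable, rank-two bundle, and explain why projectivizations of non-polystable bundles (e.g.\ non-split semistable extensions), which are also K\"ahler, contribute no new classes or are absorbed via jumping --- exactly the delicate point behind the paper's footnote correcting the earlier statements in \cite{BoTo14a,BHLT16}. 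Finally, the fact that the cones $\grk(\cald_k,J_{\grt,\grr,m})$, $m=0,\dots,k-1$, are genuinely distinct members of a bouquet --- that the associated maximal tori are non-conjugate in $\gC\go\gn(\grS_g\times S^3,\cald_k)$ --- is not bookkeeping but the content of the equivariant Gromov--Witten argument of \cite{Bus10}; you mention it only as an aside, yet without it the union could collapse, and its availability only for the trivial bundle is precisely why the moduli description is restricted to $M=\grS_g\times S^3$.
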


%\begin{theorem}\label{Riesurfcor}
%Let $M^5$ be an $S^3$-bundle over a Riemann surface $\grS_g$ of genus $g>0$ with a contact structure $\cald$ that admits an action of a $2$-torus $\bbt^2$ of Reeb type. Then each compatible $\bbt^2$-invariant CR structure $(\cald,J)$ is of Sasaki type and cone reducible. Moreover, if $g>1$ for each fixed $\grt\in \gM_g(\grS_g)$, there is a one-to-one correspondence between such CR structures and elements of ${\rm Pic}(\grS_g)/\bbz_2\approx T^{2g}\times\bbz_{\geq 0}$ and for each degree $n\in\bbz$ there is precisely one choice of CR structure $(\cald,J)$ that is cone decomposable. If $g=1$ and the degree of the line bundle is zero, then for each fixed $\grt\in \gM_g(\grS_1)$ there is a one-to-one correspondence between cone reducible $\bbt^2$ invariant CR structures and points of $\bbc\bbp^1$; whereas, if the line bundle has degree $n> 0$, then up to biholomorphism there is precisely one $\bbt^2$ invariant CR structure and it is cone decomposable.
%\end{theorem}

%\begin{equation}\label{2dmod}
%\bigcup_{m=0}^{k-1}\bigcup_{\grt\in\gM_g,\grr\in T^{2g}}\grk(\cald_k,J_{\grt,\grr,m}).
%\end{equation}

%for each $k\in\bbz^+,\grt\in\gM_g$ and each reducible representation $\grr\in T^{2g}$

We are interested in when the moduli spaces have extremal representatives, and in particular those of constant scalar curvature (CSC). It is well known that the stable case with the 1-dimensional Sasaki cones $\grk(\cald_k,J_{\grt,\grr',0})$ have CSC Sasaki representatives. So we focus our attention on the $\bbt^2$ equivariant case.

\begin{theorem}\label{sasexh}
Given any genus $g>0,\grt\in\gM_g,\grr\in T^{2g}$ and non-negative integer $m_{max}$, 
\begin{enumerate}
\item there exists a positive integer $K_{g,m_{max}}$ such that for all integers $k\geq K_{g,m_{max}}$ the  two dimensional Sasaki cones $\grk(\cald_k,J_{\grt,\grr,m})$ in the contact structure $\cald_k$ are exhausted by extremal Sasaki metrics for $m=0,\ldots,m_{max}$; 
In particular, if $0<g\leq 4$ the 2-dimensional Sasaki cones are exhausted by extremal Sasaki metrics for all $k\in\bbz^+$;
\item the 2-dimensional Sasaki cones $\grk(\cald_k,J_{\grt,\grr,m})$ admit a unique CSC Sasaki metric;
\item   in the case of the trivial bundle $\grS_g\times S^3$ when $2\leq g\leq 4$ the entire moduli space $\gM^{cl}_{\grg'}(\cald_k)$ is exhausted by extremal Sasaki metrics.
\end{enumerate}
\end{theorem}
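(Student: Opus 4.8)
The plan is to transfer the problem to the K\"ahler geometry of the quasiregular quotients by way of the join construction of Section~\ref{joinsect}, and then run the admissible machinery of Apostolov--Calderbank--Gauduchon--T{\o}nnesen-Friedman \cite{ApCaGa06,ACGT04,ACGT08}. First one recalls from \cite{BoTo11,BoTo13,BHLT16} that each $\cald_k$ is realized as the contact structure of a join with $S^3_\bfw$, that the $2$-dimensional cone $\grk(\cald_k,J_{\grt,\grr,m})$ is precisely the $\bfw$-subcone $\gt^+_\bfw$, and that a quasiregular Reeb field in this cone has for quotient a ruled orbifold over $\Sigma_g$ carrying an \emph{admissible} K\"ahler class---the twisting being governed by the integer $m$ and the relative fibre/base scaling by $k$, while the base $\Sigma_g$ carries its constant-curvature K\"ahler metric. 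Because extremality and the CSC property are transverse (a Sasaki metric is extremal, resp.\ CSC, iff its transverse K\"ahler metric is, cf.\ Section~\ref{extsassec}), and because transverse homothety collapses the cone to its space of rays, parts~(1) and~(2) reduce to the assertions that, for the relevant admissible classes on the quotient, an extremal admissible K\"ahler metric exists, and that exactly one of them up to homothety is CSC. Moreover the admissible ansatz and the criterion below depend continuously, in fact rationally, on the Reeb field throughout $\gt^+_\bfw$, not merely on its quasiregular rays, so it suffices to argue uniformly over the whole cone.

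Next I would invoke the ACGT solution of the admissible extremal problem: the admissible class in question carries an extremal K\"ahler metric if and only if an explicit \emph{extremal polynomial} $F=F_{g,m,k,\gz}(z)$ is strictly positive on the interval $(-1,1)$, where $\gz$ parametrizes the one-parameter family of rays of the cone, its two endpoint values corresponding to the two boundary rays of $\grk(\cald_k,J_{\grt,\grr,m})$. The coefficients of $F$ are rational in $m$ and $k$ and affine in the base scalar-curvature constant, which for $\Sigma_g$ is a negative multiple of $g-1$; inside this criterion the CSC locus is the zero set of the single ``Futaki'' coefficient of $F$, an affine and non-constant function of $\gz$. Thus the whole theorem comes down to a positivity estimate for $F$ on $(-1,1)$, uniform in $\gz$ and, for part~(1), in $m\le m_{\max}$.

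This uniform positivity is the heart of the matter and the step I expect to be the main obstacle. For part~(1) in the large-$k$ regime, as $k\to\infty$ the admissible class degenerates so that the manifestly positive fibre contributions to the coefficients of $F$ become dominant, while the only possibly negative terms---those proportional to $g-1$---are of strictly lower order in $k$; bookkeeping these competing terms, uniformly in $\gz$ and $m\le m_{\max}$, yields an explicit threshold $K_{g,m_{\max}}$ past which $F>0$ on $(-1,1)$. For small genus the base term is already small enough to be absorbed for every $k\ge1$: substituting $g\le4$, i.e.\ bounding $g-1$ by $3$, into $F$ and estimating its minimum on $(-1,1)$ from the sign pattern and the convexity properties of its constituent pieces gives strict positivity with no condition on $k$ or $m$, and this is exactly where the value $4$ enters. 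Part~(2) then follows since the Futaki coefficient, being affine and non-constant in $\gz$ on the bounded ray-interval, vanishes at a unique $\gz_0$; the extremal metric at $\gz_0$ is CSC, and positivity on $(-1,1)$ of the corresponding CSC-reduced polynomial for that single class is checked directly---this being more robust than general extremality and valid for all $k$ and $m$---so each $2$-dimensional cone contains exactly one CSC Sasaki metric.

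For part~(3) one combines part~(1) with the complementary $1$-dimensional cones $\grk(\cald_k,J_{\grt,\grr',0})$ attached to the irreducible representations of $\pi_1(M)$: as recalled in Section~\ref{Riesurfsect}, these carry CSC---hence extremal---Sasaki metrics on their single ray. By Theorem~\ref{sasbouqthm}, for the trivial bundle $\Sigma_g\times S^3$ the moduli space $\gM^{cl}_{\grg'}(\cald_k)$ is the union, over $m=0,\dots,k-1$ and over $(\grt,\grr)$, of the $2$-dimensional cones together with the union of these $1$-dimensional cones. When $2\le g\le4$, the ``in particular'' clause of part~(1) exhausts every $2$-dimensional cone by extremal Sasaki metrics for every $k\in\bbz^+$ and every $m$, while the $1$-dimensional cones are exhausted unconditionally; hence all of $\gM^{cl}_{\grg'}(\cald_k)$ is exhausted by extremal Sasaki metrics, which is~(3).
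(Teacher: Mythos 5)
Your overall route is the same one behind the cited proof (the survey states this theorem without proof, quoting \cite{BoTo11,BoTo13,BHLT16}): pass to the transverse K\"ahler geometry of the quasiregular quotients, which are admissible ruled orbifolds of the form $P(\calo\oplus L)$ over $\Sigma_g$, apply the Apostolov--Calderbank--Gauduchon--T{\o}nnesen-Friedman machinery so that extremality becomes positivity of the extremal polynomial $F$ on $(-1,1)$ uniformly over the ray parameter, get the threshold $K_{g,m_{max}}$ from the large-$k$ asymptotics and the unconditional bound for $0<g\leq 4$ from an explicit estimate, and obtain part (3) from Theorem \ref{sasbouqthm} together with the CSC (hence extremal) metrics on the $1$-dimensional cones coming from stable bundles via Narasimhan--Seshadri. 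One cosmetic caveat: not every $(\cald_k,J_{\grt,\grr,m})$ is literally presented as an $S^3_\bfw$-join (the flat twist $\grr$ and the $m$-twisting are handled directly on the ruled orbifold quotients in \cite{BoTo13}), but since a flat twist does not change the admissible data this does not affect the computation.

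The genuine gap is in your argument for part (2). You claim the ``Futaki coefficient'' of $F$ is an \emph{affine}, non-constant function of the ray parameter $\gz$, so that it has a unique zero. That structural claim is false: in the admissible ansatz the CSC condition is the vanishing of a coefficient obtained by solving the endpoint conditions, and as a function of the ray parameter it is a rational function whose numerator is a polynomial of degree $\geq 3$, not affine. Indeed, the survey's own Theorem \ref{admjoincsc}(3) exhibits joins (with $s_N>0$ and $l_2$ large) whose $\bfw$-cone contains at least three CSC rays, which is impossible if the relevant function were affine; so affineness cannot be the source of uniqueness. What makes uniqueness work here is the sign of the base scalar curvature: for $g\geq 1$ one has $s_{\Sigma_g}\leq 0$, and the proof in \cite{BoTo13,BHLT16} analyzes the explicit polynomial (a root-counting/monotonicity argument on the admissible interval, using the non-positivity of the base curvature term) to show there is exactly one root, and then checks positivity of $F$ for that class directly -- which is why the CSC ray exists for all $k,m$ even when, as in Proposition \ref{deg>0intro}, other rays of the same cone fail to be extremal. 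As written, your uniqueness step would not survive this substitution and needs to be replaced by the actual root analysis.
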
 

On the other hand if we let the genus grow, we can lose extremality. 

\begin{proposition}\label{deg>0intro}
For any choice of genus $g\geq 20$ there exist at least one choice of $(k,m)$ with $m=1,\ldots,k-1$ such that the regular ray in the Sasaki cone $\grk(\cald_k,J_{\grt,\grr,m})$ admits no extremal representative. 
\end{proposition}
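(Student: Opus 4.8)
The plan is to reduce the assertion to a non-existence statement for extremal K\"ahler metrics on a minimal ruled surface over $\grS_g$, and then to detect that non-existence through the explicit obstruction built into the admissible construction of Apostolov--Calderbank--Gauduchon--T{\o}nnesen-Friedman.

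First I would unwind the structure of the cone $\grk(\cald_k,J_{\grt,\grr,m})$ described in Section \ref{Riesurfsect}. For the reducible representation labelled by $m$, the pair $(\cald_k,J_{\grt,\grr,m})$ is Boothby--Wang over a minimal ruled surface $S_m=\bbp(\mathcal{O}\oplus L_m)\to\grS_g$, where $\deg L_m$ is a fixed affine function of $k$ and $m$; the two--dimensional Sasaki cone corresponds to the two--parameter admissible family of transverse K\"ahler classes, and its regular ray is the one whose Reeb orbits all close up with a common period, i.e.\ the quasi-regular structure whose quotient is exactly $(S_m,\Omega_{k,m})$ with $\Omega_{k,m}$ the primitive integral polarization $c_1$ of the circle bundle $M_k\to S_m$. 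By Theorem \ref{extremthm}, together with the fact that for a quasi-regular structure the transverse K\"ahler geometry is the orbifold K\"ahler geometry of the quotient --- here a smooth surface --- the regular ray of $\grk(\cald_k,J_{\grt,\grr,m})$ admits an extremal Sasaki representative if and only if $(S_m,\Omega_{k,m})$ carries an extremal K\"ahler metric.

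Next I would use Calabi's maximal-symmetry theorem: any extremal K\"ahler metric on such a pseudo-Hirzebruch surface is invariant under the maximal torus of $\mathrm{Aut}^0(S_m)$, hence, after the standard normalization, lies in the admissible family. Existence of an extremal metric in the class $\Omega_{k,m}$ is therefore equivalent to positivity on the open interval $(-1,1)$ of the associated \emph{extremal polynomial} $F_{k,m}(z)$ --- the quartic obtained by integrating the scalar-curvature ODE of the admissible ansatz with the boundary data forced by $\Omega_{k,m}$ and by the base $\grS_g$ having constant curvature proportional to $2-2g$. The main remaining task is then a sign computation: one chooses $k$ large and $m$ with $1\le m\le k-1$ so that, on part of $(-1,1)$, the negative contribution of the term carrying the factor $2-2g$ outweighs the positive terms (controlled by $\deg L_m$ and the class parameters), forcing $F_{k,m}$ to acquire a root there. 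Tracking the constants in that estimate --- essentially locating a zero of the quartic --- is where the numerical threshold enters: the inequality that makes the negative-curvature term dominate at an interior point is solvable with $m\le k-1$ only once $2(g-1)$ is sufficiently large, and carrying the bookkeeping through gives $g\ge 20$ (the precise value being an artefact of the estimate rather than of the geometry).

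An alternative route, which I would use as a cross-check, bypasses the polynomial and appeals to relative K-stability: by Theorem \ref{Bovthm} an extremal Sasaki structure forces the polarized cone $(Y,R)$ to be K-semistable relative to a maximal torus, so it suffices to exhibit a $\bbt$-equivariant test configuration --- the deformation to the normal cone of a section of $S_m$ --- whose relative Donaldson--Futaki invariant ${\rm Fut}_\chi$ is negative; this invariant is again an explicit rational function of $g,k,m$, negative in the same range. I expect the genuinely delicate point in either approach to be not the algebra but pinning down which K\"ahler class $\Omega_{k,m}$ the regular ray actually selects, since a shift by one unit in $\deg L_m$ moves the window in which the obstruction is active; the root-finding for $F_{k,m}$ is then the main computational burden once that identification is in hand.
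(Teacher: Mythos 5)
Two remarks, one about context and one about substance. The paper itself states Proposition \ref{deg>0intro} without proof (Section \ref{Riesurfsect} explicitly defers to \cite{BoTo11,BoTo13,BHLT16}), so the comparison is with the argument in \cite{BoTo13}. Your overall route is the same as that source: identify the regular ray of $\grk(\cald_k,J_{\grt,\grr,m})$ with the Boothby--Wang structure over the pseudo-Hirzebruch surface $S_m=\bbp(\calo\oplus L_m)\ra{1.3}\grS_g$, translate extremality of the regular ray into existence of an extremal K\"ahler metric in the induced class on $S_m$, and then detect non-existence through the explicit criterion attached to the admissible construction of \cite{ApCaGa06,ACGT04,ACGT08} and T{\o}nnesen-Friedman. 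So the strategy is right and matches the cited proof.

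The substantive problems are two. First, the step ``Calabi's maximal-symmetry theorem $\Rightarrow$ any extremal metric in the class lies in the admissible family'' is not a valid deduction: Calabi gives invariance under a maximal compact subgroup of the reduced automorphism group, but a torus-invariant K\"ahler metric on $S_m$ need not be of Calabi/admissible ansatz form, so failure of positivity of the extremal polynomial does not, together with Calabi alone, rule out extremal metrics in the class. The non-existence direction is itself a theorem (for these ruled surfaces it rests on T{\o}nnesen-Friedman's work and on the relative K-(in)stability computation of \cite{ACGT08}, i.e.\ essentially on the destabilizing test configuration you describe); in other words, what you offer as a ``cross-check'' via Theorem \ref{Bovthm} is not optional but is the argument that closes this gap. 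Second, the quantitative heart of the proposition is precisely the ``bookkeeping'' you defer: one must identify the transverse K\"ahler class selected by the regular ray in terms of $(g,k,m)$, write down T{\o}nnesen-Friedman's explicit existence bound for that class, and show that the resulting non-existence window contains an integer pair with $1\le m\le k-1$ exactly when $g\geq 20$. Without that computation the threshold in the statement is unproved, so as it stands the proposal is a correct plan with the decisive estimate and the stability input still to be supplied.
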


In this case the regular ray is relatively K-unstable.

%\subsection{Other Examples in Dimension 5}

\section{Sasaki Moduli Spaces in higher dimension}
Much less is known about the Sasaki moduli spaces for dimension greater than 5. There are some rather sporatic results depending on the construction used. For example in the case of Brieskorn manifolds, or even more generally complete interesections of hypersurfaces of weighted homogeneous singularities many results can be obtained. However, for the sake of time as well as convenience we limit the discussion to several examples. Results for the cases not treated can be found in \cite{BG05,BoTo14P,BMvK15,BoTo14a} and references therein.

\subsection{Sasaki Moduli on Homotopy Spheres}
Here we restrict our discussion to homotopy spheres that bound parellelizable manifolds. It is known, cf. Theorem 9.5.10 in \cite{BG05}, that for any homotopy sphere $\grS^{2n+1}$ that bounds a parallelizable manifold $|\pi_0(\gM^{cl}_+(\grS^{2n+1}))|=\aleph_0$. When $n$ is odd the components belong to inequivalent almost contact structures, and when $n$ is even the result is due to Ustilovsky \cite{Ust99} using contact homology. The well definedness of the contact homology in this case has recently been established by Gutt \cite{Gutt17}. For further discussion of this result see Section 6.1 of \cite{KwvKo13}. In particular the moduli space $\gM^{cl}_+(\grS^{2n+1})$ of the homotopy spheres given in Table \ref{sasconetable} have an infinite number of components.

We now consider the SE moduli on homotopy spheres. These moduli spaces cannot belong to the classes $\gM^{cl}_+$ described by the homotopy spheres of Table \ref{sasconetable} since the latter admit no extremal metrics at all in its Sasaki cone. So the former belong to different components than those of Table \ref{sasconetable}. From \cite{BGK05,BGKT05,BMvK15} we have

\begin{theorem}\label{SEsphere}
Let $\grS^{4n+1}$ be a homotopy sphere bounding a parellelizable manifold. Then
\begin{enumerate}
\item the number of components of $\gM^{SE}(\grS^{4n+1})$ grows doubly exponentially with dimension;
\item for the standard diffeomorphism type on $S^9$ we have $|\pi_0(\gM^{SE}(S^9))|\geq 983$ and for the exotic $\grS^9$ we have $|\pi_0(\gM^{SE}(\grS^9))|\geq 494$. Moreover, the components belong to distinct components of \linebreak $\gM^{cl}_{+}(S^9)$ and $\gM^{cl}_{+}(\grS^9)$, respectively.
\end{enumerate}
\end{theorem}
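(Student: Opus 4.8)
The plan is to realize both moduli spaces through links of Brieskorn--Pham polynomials and to reduce each assertion to a combinatorial count of admissible exponent vectors. A homotopy $(4n+1)$-sphere bounding a parallelizable manifold is realized by the link $L(\bfa)$ of $f_\bfa = z_0^{a_0}+\cdots+z_{2n+1}^{a_{2n+1}}$ (so there are $2n+2$ variables), which carries its natural Sasakian structure $\cals_\bfa$ by Theorem \ref{Briessasthm}. Three inputs enter. \emph{Existence}: by the continuity-method treatment of positive Sasakian structures in \cite{BG05} together with the Demailly--Koll\'ar $\alpha$-invariant estimate of \cite{BGK05}, if $\bfa$ satisfies the numerical inequality $1<\sum_i 1/a_i$, an upper bound on $\sum_i 1/a_i-1$ in terms of the largest exponents, and a general-position (coprimality) condition, then the log Fano orbifold quotient $(\calz_{f_\bfa},\grD)$ admits a K\"ahler--Einstein orbifold metric and hence $L(\bfa)$ admits a Sasaki--Einstein metric; perturbing $f_\bfa$ by lower-order admissible monomials one can in addition arrange the transverse automorphism group to be finite, so each admissible $\bfa$ contributes an isolated point of $\gM^{SE}$. \emph{Diffeomorphism type}: Brieskorn's graph condition detects when $L(\bfa)$ is a homotopy sphere, and his signature/Arf-invariant formula locates it in the group $bP_{4n+2}$, which has order at most $2$ (so in dimension $9$ it separates $S^9$ from the Kervaire sphere $\grS^9$). \emph{Contact invariants}: the orbifold quotients here are Fano, so by the Fano/general-type corollary following Theorem \ref{KvKthem} the mean Euler characteristic $\chi_m(W)$, and more generally the graded groups $SH^{S^1,+}(W)$, are contact invariants of $L(\bfa)$; when two admissible vectors give distinct values of $\chi_m$ --- or, for the residual coincidences, distinct $SH^{S^1,+}$, as in Example~5.1 and Lemma~5.2 of \cite{BMvK15} --- the links are non-contactomorphic, hence their Sasaki--Einstein structures lie in distinct components of $\gM^{cl}_+$ and a fortiori are distinct in $\gM^{SE}$. (These components are automatically disjoint from those of Table \ref{sasconetable}, which carry no extremal metric at all while Sasaki--Einstein metrics are CSC.)

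For assertion (1) I would exhibit a doubly-exponentially large family of admissible $\bfa$ of length $2n+2$. The numerical condition forces $\sum_i 1/a_i$ to lie just above $1$, and a counting argument of Erd\H{o}s--Graham type shows that the number of ways to write a rational slightly larger than $1$ as a sum of $2n+2$ unit fractions --- equivalently, the number of admissible exponent vectors --- grows like $2^{2^{cn}}$ for some $c>0$; one checks that the terminal upper bound and the general-position condition can be maintained throughout such a family (a greedy, Sylvester-type construction, in which the branching width at each step is comparable to the previous, already doubly-exponential, term, is the model to keep in mind). Partitioning this family by the diffeomorphism type of $L(\bfa)$ over the at most two relevant homotopy spheres and applying the pigeonhole principle yields one homotopy sphere realized doubly-exponentially often; tracking the Arf invariant along the construction (as in \cite{BGK05}) pins this down for each parallelizable-bounding homotopy sphere. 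Passing to a subfamily on which $\chi_m$, hence the contact structure, is injective --- using $SH^{S^1,+}$ to break the residual ties --- then gives the claimed doubly-exponential lower bound on $|\pi_0(\gM^{SE}(\grS^{4n+1}))|$.

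For assertion (2) I would run the enumeration in dimension $9$ (so $2n+2=6$): list all $\bfa=(a_0,\dots,a_5)$ up to permutation that satisfy the numerical existence criterion and the general-position condition and for which $L(\bfa)$ is a homotopy sphere (Brieskorn's graph condition), then split this list according to the Kervaire invariant computed by Brieskorn's formula. As in \cite{BGK05,BGKT05}, the standard-sphere list then has at least $983$ entries and the Kervaire-sphere list at least $494$. Computing $\chi_m(L(\bfa))$ for every entry by the closed formula of \cite{BMvK15}, and invoking $SH^{S^1,+}(W)$ for the finitely many pairs on which $\chi_m$ agrees, one concludes that the underlying contact structures are pairwise non-contactomorphic; hence the corresponding Sasaki--Einstein structures represent $\geq 983$ (respectively $\geq 494$) distinct components of $\gM^{cl}_+(S^9)$ (respectively of $\gM^{cl}_+(\grS^9)$), and in particular of $\gM^{SE}$.

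The main obstacle is the simultaneous bookkeeping in assertion (1): the numerical existence inequality, the control of the class of $L(\bfa)$ in $bP_{4n+2}$, and the injectivity of the separating invariant must all survive along a family that is still doubly-exponentially large --- it is precisely the Sylvester / Erd\H{o}s--Graham-type branching estimate that upgrades the count from singly to doubly exponential, and this is the delicate point. The analytic existence of the Sasaki--Einstein metrics and Gutt's deformation-invariance of $SH^{S^1,+}$ (Theorem \ref{KvKthem}) are used as black boxes; assertion (2) is then a finite but substantial computation whose only subtlety is handling the residual coincidences of $\chi_m$ via the finer invariant.
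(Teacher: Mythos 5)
Your outline follows essentially the same route as the sources this survey cites for the theorem (\cite{BGK05,BGKT05,BMvK15}): Brieskorn--Pham links in $2n+2$ variables, orbifold K\"ahler--Einstein existence under the numerical and general-position conditions of \cite{BGK05}, Brieskorn's determination of the class in $bP_{4n+2}$, and contact-level separation by $\chi_m(W)$ and $SH^{S^1,+}(W)$ from \cite{BMvK15}; your treatment of part (2) is exactly the finite enumeration plus invariant computation carried out there.

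For part (1), however, your argument as written has a gap, and it is one you create for yourself. You reduce the doubly exponential count of components of $\gM^{SE}(\grS^{4n+1})$ to producing a doubly exponentially large family of admissible exponent vectors on which $\chi_m$ (with $SH^{S^1,+}$ breaking ties) is injective, and precisely that injectivity is left unproved: nothing in the Sylvester/Erd\H{o}s--Graham branching you sketch controls the values of the invariant along the family, and none of the quoted machinery supplies such control. The step is also unnecessary, because part (1) is a statement about $\gM^{SE}$, not about $\gM^{cl}_{+}$: the deformation classes coming from distinct admissible vectors $\bfa$ are already separated by the discrete data of the quasi-regular Sasaki--Einstein structures (weights and degree, equivalently the non-isomorphic quotient orbifolds), which is how the doubly exponential growth is obtained in \cite{BGK05}; the contact invariants are only needed for the finer ``Moreover'' clause of part (2), where they are actually computed case by case. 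Relatedly, your claim that after perturbation each admissible $\bfa$ contributes an isolated point of $\gM^{SE}$ is neither needed nor correct in general --- these constructions typically yield positive-dimensional families of Sasaki--Einstein metrics (compare Theorem \ref{SEs5} for $S^5$) --- and the theorem only counts components, so you should drop that reduction rather than try to justify it.
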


For homotopy spheres of dimension $4n+3$ we only consider the case of homotopy 7-spheres. We present our results in Table \ref{SE7sph} from \cite{BMvK15}. In the table, `sig' is the Hirzebruch signature of the Stein filling $W$, which indicates the exotic nature of $\grS^7$ and $N$ is the number of Brieskorn homotopy $7$-spheres with the indicated signature. The third column gives the number of pairs of Brieskorn spheres of the indicated signature with the same mean Euler characteristic. There is also one case of a triple which occurs with signature 15. From this one easily obtains lower bounds on $|\pi_0(\gM^{SE}(\grS^7))|$. The table is based on an Excel file that can be found at Otto van Koert's webpage {http://www.math.snu.ac.kr/$\sim$okoert/}.

%\urladdr{http://www.math.snu.ac.kr/~okoert/tools/BP7_list_same_mec_exo.xls}.

\begin{table}
\begin{center}
\begin{tabular}{| l | r | r || r | r | r |}
\hline
sig & $N$ & pairs & sig & N & pairs \\ \hline
0 & 353 &      0     & 14  & 390 & 1   \\ \hline
1 & 376 &     0      & 15  & 409 & 0   \\ \hline
2 & 336 & 2          & 16  & 352 & 3   \\ \hline
3 & 260 & 1          & 17  & 226 & 1   \\ \hline
4 & 294 & 1          & 18  & 260 & 0   \\ \hline
5 & 231 & 4          & 19  & 243 & 0   \\ \hline
6 & 284 & 2          & 20  & 309 & 1   \\ \hline
7 & 322 & 1          & 21  & 292 & 1   \\ \hline
8 & 402 & 2          & 22  & 425 & 1   \\ \hline
9 & 317 & 1          & 23  & 307 & 2   \\ \hline
10&309 & 5          & 24  & 298 & 0   \\ \hline
11&252 & 2          & 25  & 230 & 1   \\ \hline
12&304 & 0          & 26  & 307 & 2   \\ \hline
13&258 & 0          & 27  & 264 & 0   \\ \hline

\end{tabular}
\end{center}
\medskip
\caption{Oriented Homotopy $7$-spheres with SE metrics}
\label{SE7sph}
\end{table}

\subsection{Sasaki Moduli on Lens Space Bundles over Hodge Manifolds}
It follows from Proposition 7.6.7  of \cite{BG05} that any Sasaki join of the form $M_{\bfl,\bfw}=M\star_\bfl S^3_\bfw$ where $M$ is a regular Sasaki manifold over a compact Hodge manifold $N$ is the total space of a 3-dimensional lens space bundle over $N$ with fiber $S^3/\bbz_{l_2}$. The integers $\bfl=(l_1,l_2)$ and $\bfw=(w_1,w_2)$ satisfy the conditions of Section \ref{joinsect}. Clearly this case is a generalization of the previous cases treated in Sections \ref{s2s3sect} and \ref{Riesurfsect}. 

\begin{question}\label{lensbundtop}
What can one say about the topology of the manifolds $M_{\bfl,\bfw}$?
\end{question}

\begin{question}\label{lensbundquest}
What can one say about the set of contact structures of Sasaki type on the manifolds $M_{\bfl,\bfw}$?
\end{question}

The first question is much more accessible than the second. For as was seen in \cite{BoTo14a} if one knows the differentials for the Serre spectral sequence of the fibration $S^1\ra{1.8}M\ra{1.8} N$ one can, in principal, compute the cohomology ring of the manifold $M_{\bfl,\bfw}$. As for the second question the answer appears to depend strongly on the Hodge manifold $N$. In particular, we are interested in the cardinality $|\pi_0(\gM^{cl}_+(M_{\bfl,\bfw}))|$. We have an answer in a particular case \cite{BoTo14b} which generalizes previous work of Wang and Ziller \cite{WaZi90}. This is the homogeneous case $\bfw=(1,1)$ in our notation.

\begin{theorem}\label{lensbuntopthm}
Let $S^1\ra{1.8}S^{2p+1}\ra{1.8}\bbc\bbp^p$ be the standard regular Sasaki circle bundle. The join $M_{l_1,l_2,\bfw}=S^{2p+1}\star_{l_1,l_2}S^3_\bfw$ has
integral cohomology ring
$$H^*(M_{l_1,l_2,\bfw},\bbz)\approx\bbz[x,y]/(w_1w_2l_1^2x^2,x^{p+1},x^2y,y^2)$$
where $x,y$ are classes of degree $2$ and $2p+1$, respectively. Furthermore, for each homotopy type there is a finite number of diffeomorphism types; hence, for some diffeomorphism type there is a countable infinity of toric contact structures $\{\cald_k\}$ of Sasaki type which are inequivalent as contact structures.
\end{theorem}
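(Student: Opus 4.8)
The plan is to compute the integral cohomology ring of $M_{l_1,l_2,\bfw}$ from the join construction, and then read off the topological and contact statements from the shape of that ring.

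For the cohomology, I would use that, by the diagram \eqref{s2comdia} with $M=S^{2p+1}$ and $N=\bbc\bbp^p$, the manifold $M_{l_1,l_2,\bfw}$ carries the quasiregular Sasakian structure $\cals_{\bfl,\bfw}$ whose Reeb circle exhibits a map $\pi_1\colon M_{l_1,l_2,\bfw}\to\bbc\bbp^p\times\bbc\bbp^1[\bfw]$ as an $S^1$-orbibundle, while at the same time $\pi_L\colon S^{2p+1}\times S^3_\bfw\to M_{l_1,l_2,\bfw}$ is an honest principal circle bundle (here one uses that $S^3_\bfw$ is $S^3$ as a smooth manifold). The first step is to identify, from the vector field $L_{\bfl,\bfw}$ of \eqref{Lvec} and the admissibility condition \eqref{adcond}, the orbifold Euler class of $\pi_1$ as $e=l_1u+l_2v$ up to sign, where $u\in H^2(\bbc\bbp^p;\bbz)$ is the positive generator and $v\in H^2_{orb}(\bbc\bbp^1[\bfw];\bbz)$ is the orbifold generator, normalised so that $w_1w_2v$ is the integral generator of $H^2(S^2;\bbz)$. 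Running the Gysin sequence of $\pi_1$ over $B=\bbc\bbp^p\times\bbc\bbp^1[\bfw]$, whose integral orbifold cohomology is $(\bbz[u]/(u^{p+1}))\otimes H^*(S^2;\bbz)$ with $v$ as above, then gives: $M_{l_1,l_2,\bfw}$ is simply connected; cup product with $e$ is injective on $H^0_{orb}(B)$, so $H^2(M_{l_1,l_2,\bfw})=\bbz\langle x\rangle$ with $x$ the image of $u$ and $x^{p+1}=0$; and the cokernel of cup product with $e$ in degree four is cyclic, its order working out to $w_1w_2l_1^2$ once the "fractional" $\bbc\bbp^1[\bfw]$-part of $e$ is accounted for, so $w_1w_2l_1^2\,x^2=0$. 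The degree $2p+1$ class $y$ with $y^2=0$ and $x^2y=0$ drops out of the same sequence; alternatively one can verify the whole ring against the Leray--Serre spectral sequence of the lens space bundle $S^3_\bfw/\bbz_{l_2}\to M_{l_1,l_2,\bfw}\to\bbc\bbp^p$ of Proposition 7.6.7 of \cite{BG05}, closing it off with Poincar\'e duality on the closed oriented $(2p+3)$-manifold. The homogeneous case $\bfw=(1,1)$ is exactly the Wang--Ziller computation \cite{WaZi90}. I expect the one genuinely delicate point in this part to be showing that the torsion coefficient is exactly $w_1w_2l_1^2$ and not merely a proper divisor of it.

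Next I would exploit that the ring just obtained depends on $(l_1,l_2,\bfw)$ only through the pair $(p,\,w_1w_2l_1^2)$. Fixing $p$, say $\bfw=(1,1)$, and a value of $l_1$, the integer $l_2$ still ranges over the infinitely many positive integers coprime to $l_1$ permitted by \eqref{adcond}, so infinitely many of the joins share one and the same cohomology ring. Since that ring is a fixed finitely generated graded ring and the manifolds are simply connected of the fixed odd dimension $2p+3$, only finitely many homotopy types occur among this infinite family — the Postnikov data through dimension $2p+3$ is pinned down to finitely many possibilities by the fixed cohomology together with the explicit circle-bundle description over $\bbc\bbp^p\times\bbc\bbp^1[\bfw]$; this is the step worked out carefully in \cite{BoTo14b}. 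Combined with the fact that a closed simply connected topological manifold of dimension $\ge5$ carries only finitely many smooth structures up to diffeomorphism, this forces only finitely many diffeomorphism types among the family, so by the pigeonhole principle there is a single smooth manifold $M$ diffeomorphic to $M_{l_1,l_2,\bfw}$ for infinitely many admissible triples. I regard this finiteness of homotopy types as the main obstacle of the whole proof: it is here that integral $k$-invariants must be controlled rather than invoking a soft finiteness statement.

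Finally, each of these joins is toric of Sasaki type: by \eqref{t+add} the Sasaki cone of $M_{l_1,l_2,\bfw}$ contains the sum of those of $S^{2p+1}$ (dimension $p+1$) and of $S^3_\bfw$ (dimension $2$), which has dimension $(p+1)+2-1=p+2=\tfrac12\bigl((2p+3)+1\bigr)$, the maximal possible value for a $(2p+3)$-dimensional Sasaki manifold, so the contact structure $\cald_{l_1,l_2,\bfw}=\ker\eta_{l_1,l_2,\bfw}$ is toric. Transporting these to $M$ along the diffeomorphisms above, one computes $c_1(\cald_{l_1,l_2,\bfw})=\bigl(l_2(p+1)-l_1(w_1+w_2)\bigr)x$ in $H^2(M;\bbz)\cong\bbz$, a nonconstant linear function of $l_2$, so these classical invariants take infinitely many distinct values among the triples collapsing onto $M$; by Proposition \ref{c1inv} the corresponding contact structures are then pairwise inequivalent. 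Relabelling this subfamily as $\{\cald_k\}_{k\in\bbn}$ yields the asserted countable infinity of inequivalent toric contact structures of Sasaki type on $M$.
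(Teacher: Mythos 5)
Most of your proposal tracks the paper's (admittedly terse) outline: the ring $H^*(M_{l_1,l_2,\bfw},\bbz)$ is obtained from the join/bundle structure exactly as in \cite{BoTo14a,BoTo14b} (your Gysin/Serre sketch over $\bbc\bbp^p\times\bbc\bbp^1[\bfw]$, checked against the homogeneous Wang--Ziller case \cite{WaZi90}, is consistent with that), toricity follows from additivity of Sasaki cones under the join, and the endgame --- pigeonhole onto one diffeomorphism type and separate the structures by $c_1(\cald_{l_1,l_2,\bfw})=\bigl(l_2(p+1)-l_1|\bfw|\bigr)\grg$ together with Proposition \ref{c1inv} --- is precisely the paper's argument.

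The genuine gap is in the finiteness step, which is the one place you argue on your own rather than defer. You conclude ``finitely many diffeomorphism types'' from (i) finitely many homotopy types in the family and (ii) the finiteness of smooth structures on a fixed closed simply connected topological manifold of dimension $\geq 5$. This is a non sequitur: (ii) counts smoothings of a fixed \emph{homeomorphism} type, so you would additionally need finitely many homeomorphism types within each homotopy type, and that is false as a general principle in dimension $\geq 5$ (there are infinitely many pairwise non-homeomorphic closed manifolds homotopy equivalent to $\bbc\bbp^n$ for $n\geq 3$, distinguished by rational Pontryagin classes). The paper's mechanism is different and bypasses this: the joins are formal in the sense of rational homotopy theory, so the fixed cohomology ring pins down the rational homotopy type, and a Sullivan-type finiteness theorem (integral cohomology ring, rational homotopy type and Pontryagin classes determine a closed simply connected smooth manifold of dimension $\geq 5$ up to finitely many diffeomorphism types) applies directly, the Pontryagin classes being constrained because $H^{4i}(M;\bbz)$ is finite for $i>0$; this is the content of the step you attribute to \cite{BoTo14b}, not a Postnikov-tower count of homotopy types. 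Your route could be repaired in this specific situation --- since $H^{4i}(M;\bbq)=0$ for $0<4i\leq 2p+3$, the normal invariants are finite and hence so is the topological structure set of each homotopy type, after which (ii) does apply --- but as written the bridge from homotopy type to homeomorphism type is simply absent, and the step you flag as the main obstacle is not the one that actually fails.
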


\begin{proof}[Outline of proof]
The computation of the cohomology ring is standard and appears in \cite{BoTo14a,BoTo14b}. The finiteness of the diffeomorphism types is a result of the formality (in the sense of rational homotopy theory) of the manifold $S^{2p+1}\star_{l_1,l_2}S^3_\bfw$ with details appearing in \cite{BoTo14b}. It follows that for for each $l_1,w_1,w_2$ and some diffeomorphism type there is a countably infinite number of toric contact structures $\{\cald_k\}$ of Sasaki type. The first Chern class of $\cald_{l_1,l_2,\bfw}$ can easily be computed, viz.
\begin{equation}\label{c1}
c_1(\cald_{l_1,l_2,\bfw})=\bigl(l_2(p+1)-l_1|\bfw|\bigr)\grg
\end{equation}
which by varying $l_2$ implies that infinitely many $\{\cald_k\}$ are contact inequivalent.
\end{proof}

Since $\bbc\bbp^p$ is Fano, there are infinitely many with positive Sasakian structures, so we have $|\pi_0(\gM^{cl}_+(S^{2p+1}\star_{l_1,l_2}S^3_\bfw))|=\aleph_0$. More specific topological information about these structures in dimension 7 can be found in \cite{BoTo14b}.

Let us now turn to the moduli spaces of extremal, CSC, and SE structures where $M$ is any regular Sasaki manifold with constant scalar curvature.
The main results can be found in \cite{BoTo14a}. It is straightforward to generalize this by replacing the regularity condition on $M$ by that of quasi-regularity. This is currently being pursued \cite{BoTo18c}, so here we present the regular case.

\begin{theorem}\label{admjoincsc}
Let $M_{\bfl,\bfw}=M\star_{l_1,l_2}S^3_\bfw$ be the $S^3_\bfw$-join with a regular Sasaki manifold $M$ which is an $S^1$-bundle over a compact K\"ahler manifold $N$ with constant scalar curvature $s_N$. Then 
\begin{enumerate}
\item there exists a Reeb vector field $\xi_\bfv$ in the 2-dimensional $\bfw$-Sasaki cone on $M_{\bfl,\bfw}$ such that the corresponding ray of Sasakian structures $\cals_a=(a^{-1}\xi_\bfv,a\eta_\bfv,\Phi,g_a)$ has constant scalar curvature;
\item if $s_N\geq 0$, then the $\bfw$-Sasaki cone $\gt^+_\bfw$ is exhausted by extremal Sasaki metrics.
\item if $s_N>0$ then for sufficiently large $l_2$ there are at least three CSC rays in the $\bfw$-Sasaki cone of the join $M_{l_1,l_2,\bfw}$.
\end{enumerate}
\end{theorem}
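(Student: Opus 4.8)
The plan is to set up the $\bfw$-Sasaki cone $\gt^+_\bfw$ of $M_{\bfl,\bfw}$ through the admissible construction of Apostolov, Calderbank, Gauduchon and T{\o}nnesen-Friedman \cite{ACGT04,ACGT08,ApCaGa06}, which reduces the transverse K\"ahler geometry of the ray determined by a quasi-regular Reeb field $\xi_\bfv\in\gt^+_\bfw$ to a one-variable momentum problem. Concretely, since $M$ is a regular Sasaki $S^1$-bundle over the compact constant scalar curvature K\"ahler manifold $(N,\omega_N)$, the quotient of $M_{\bfl,\bfw}$ by the flow of $\xi_\bfv$ is a log pair $(\calz,\grD)$ with $\calz=\bbp(\calo\oplus\call)\to N$ for a line orbibundle $\call$ whose degree is fixed by $(\bfl,\bfw)$, and $\grD$ supported on the two sections with ramification indices $w_1,w_2$. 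The transverse K\"ahler classes along the $\xi_\bfv$-ray are precisely the admissible K\"ahler classes on $(\calz,\grD)$, and an admissible metric is encoded by a smooth momentum profile $\Theta(z)$ on $I=[-1,1]$ with $\Theta>0$ on the interior and the boundary conditions $\Theta(\pm1)=0$, $\Theta'(\pm1)=\mp 2$. Because $\omega_N$ has constant scalar curvature, the equation for the transverse scalar curvature $s^T$ decouples into an ODE in $z$, with $s^T$ an affine function of $z$ plus $-(p_c\Theta)''/p_c$, where $p_c$ is the fixed polynomial built from the normal bundle constants and $\dim_\bbc N$.

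The second step is the explicit integration. By Theorem \ref{extremthm} together with the Boyer--Galicki--Simanca characterization \cite{BGS06}, a Sasakian structure in the $\xi_\bfv$-ray is extremal exactly when $s^T$ is an affine function of the Killing potential, i.e. of $z$; imposing $s^T=\alpha z+\beta$ turns into a linear second order ODE for $p_c\Theta$, whose solution is the explicit polynomial $F_\Xi(z)$ of degree $\dim_\bbc\calz+1$, with $(\alpha,\beta)$ uniquely pinned down by the conditions $F_\Xi(\pm1)=0$, $F_\Xi'(\pm1)=\mp 2p_c(\pm1)$. Thus every $\bfv$ carries a canonical candidate profile $\Theta_\Xi=F_\Xi/p_c$, and it defines a genuine transverse K\"ahler metric -- hence an extremal Sasakian structure in that ray -- if and only if $F_\Xi>0$ on $(-1,1)$. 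For part (2) the plan is to show that when $s_N\geq 0$ the coefficients of $F_\Xi$, which depend polynomially on $\bfv$, on $s_N$, and on the normal bundle constants (all of which have controlled signs in the join), force $F_\Xi>0$ on the whole interval; this is the positivity estimate carried out in \cite{BoTo13,BoTo14a}, and since it holds for every $\bfv\in\gt^+_\bfw$ it yields the claimed exhaustion of the cone by extremal Sasaki metrics.

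For part (1), constant scalar curvature along the $\xi_\bfv$-ray forces $\alpha=0$ in the affine expression for $s^T$; combined with the boundary conditions on $F_\Xi$ this leaves a single consistency condition, which after clearing denominators is a polynomial equation $P(r)=0$ in the ray parameter $r$ (the ratio of the two components of $\bfv$). Since the base $N$ is CSC this equation is nontrivial and of positive degree, and I would locate a root $r_0$ in the admissible interval for $r$ by an intermediate value argument comparing the sign of $P$ at the two ends of the cone. At $r_0$ one checks that the CSC profile is positive on $(-1,1)$ -- the estimate is cleaner here since the linear term is absent -- so the $r_0$-ray is CSC. For part (3), with $s_N>0$ and $l_2$ large I would carry out an asymptotic analysis of the equation $P(r)=0$: after the natural normalization of $r$ used in \cite{BoTo14a}, the polynomial $P$ converges on the relevant range to a limit polynomial whose real roots interior to the admissible interval are simple and three in number, so for $l_2$ sufficiently large $P(r)=0$ has at least three solutions there, each giving a CSC ray because positivity of the corresponding $F_\Xi$ persists under the perturbation.

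I expect the main obstacle to be this root count for part (3): isolating the correct rescaling of $r$ as $l_2\to\infty$, identifying the limiting polynomial, and verifying that exactly three of its roots are genuinely interior to the admissible parameter interval -- so that they correspond to honest Reeb fields in $\gt^+_\bfw$ rather than to degenerate boundary configurations -- and then controlling the perturbation for finite $l_2$. By contrast, the positivity estimates underlying parts (1) and (2), while technical, are essentially sign bookkeeping once the explicit polynomial $F_\Xi$ and its dependence on $\bfv$ and $s_N$ are in hand.
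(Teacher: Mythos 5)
Your proposal is essentially the paper's own route: the survey gives no independent proof but defers to \cite{BoTo14a} (together with \cite{BoTo13,BoTo14P}), where, precisely as you outline, the quasi-regular quotients of rays in $\gt^+_\bfw$ are realized as admissible projective bundles over $N$ with branch divisor along the zero and infinity sections, extremality reduces to positivity of the explicit momentum polynomial on $(-1,1)$, the CSC condition to a real root of an explicit polynomial in the ray parameter located by a sign/intermediate value argument, and the three CSC rays for large $l_2$ to a root count on that same polynomial with $s_N>0$. The only slips are cosmetic: the ramification indices of the branch divisor depend on the chosen ray and on $l_2$ (they are not simply $w_1,w_2$), so the endpoint conditions read $\Theta'(\pm 1)=\mp 2/m_{\pm}$ rather than $\mp 2$.
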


For admissible $(l_1,l_2,\bfw)$ the map $\star_{l_1,l_2}S^3_\bfw$ on $\cals\calm$ induces a map on moduli spaces 
\begin{equation}\label{mapmod}
\gM^{cl}(M)\ra{2.8} \gM^{cl}(M\star_{l_1,l_2}S^3_\bfw).
\end{equation}
Item (1) of Theorem \ref{admjoincsc} says that the set $\gM^{CSC}$ of all Sasaki classes with a CSC representative is a submodule under $\star_{l_1,l_2}S^3_\bfw$.

To consider Sasaki-Einstein moduli from the $S^3_\bfw$ join operation we need to choose
$$l_{1}=\frac{\cali_N}{\gcd(w_1+w_2,\cali_N)},\qquad   l_2=\frac{w_1+w_2}{\gcd(w_1+w_2,\cali_N)},$$ where $\cali_N$ denotes the Fano index of $N$, which imposes the condition $c_1(\cald)=0$. If we assume that $N$ also has a positive K\"ahler-Einstein metric we can find an SE metric in the $\bfw$ Sasaki subcone $\gt^+_\bfw$ of the join $M_{l_1,l_2,\bfw}=M\star_{l_1,l_2}S^3_\bfw$ which gives rise to a nonempty moduli space $\gM^{SE}(M_{l_1,l_2,\bfw})$. These are straightforward multidimensional  genaralizations of Theorem \ref{ypqbouq}, so we fully expect the bouquet phenomenon. We can also mention that the every element of $\gt^+_\bfw$ can be represented by a Sasaki-Ricci soliton. 

\newcommand{\etalchar}[1]{$^{#1}$}
\def\cprime{$'$} \def\cprime{$'$} \def\cprime{$'$} \def\cprime{$'$}
  \def\cprime{$'$} \def\cprime{$'$} \def\cprime{$'$} \def\cprime{$'$}
  \def\cdprime{$''$} \def\cprime{$'$} \def\cprime{$'$} \def\cprime{$'$}
  \def\cprime{$'$}
\providecommand{\bysame}{\leavevmode\hbox to3em{\hrulefill}\thinspace}
\providecommand{\MR}{\relax\ifhmode\unskip\space\fi MR }
% \MRhref is called by the amsart/book/proc definition of \MR.
\providecommand{\MRhref}[2]{%
  \href{http://www.ams.org/mathscinet-getitem?mr=#1}{#2}
}
\providecommand{\href}[2]{#2}

\end{document}